\providecommand{\U}[1]{\protect\rule{.1in}{.1in}}
\providecommand{\U}[1]{\protect \rule{.1in}{.1in}}
\providecommand{\U}[1]{\protect \rule{.1in}{.1in}}
\newtheorem{theorem}{Theorem}[section]
\newtheorem{lemma}{Lemma}[section]
\newtheorem{proposition}{Proposition}[section]
\newtheorem{definition}{Definition}[section]
\numberwithin{equation}{section}
\newtheorem {conjecture}{Conjecture}
\theoremstyle{remark}
\newtheorem{remark}{Remark}[section]
\numberwithin{equation}{section}
\begin{document}
\title[Existence of nonconstant CR-holomorphic functions of polynomial growth]{Existence of nonconstant CR-holomorphic functions of polynomial growth in
Sasakian Manifolds}
\author{$^{\ast}$Shu-Cheng Chang}
\address{Department of Mathematics and Taida Institute for Mathematical Sciences
(TIMS), National Taiwan University, Taipei 10617, Taiwan}
\email{scchang@math.ntu.edu.tw }
\author{$^{\dag}$Yingbo Han}
\address{{School of Mathematics and Statistics, Xinyang Normal University}\\
Xinyang,464000, Henan, China}
\email{{yingbohan@163.com}}
\author{$^{\ddag}$Nan Li }
\address{Department of Mathematics, The City University of New York - NYC College of
Technology, Brooklyn, NY 11201, USA}
\email{NLi@citytech.cuny.edu}
\author{Chien Lin}
\address{Yau Mathematical Science Center, Tsinghua University, Haidian District,
Beijing 100084, China}
\email{clin@mail.tsinghua.edu.cn}
\thanks{$^{\ast}$Shu-Cheng Chang is partially supported in part by the MOST of Taiwan.}
\thanks{$^{\dag}$Yingbo Han is partially supported by an NSFC 11971415 and Nanhu
Scholars Program for Young Scholars of {Xinyang Normal University}.}
\thanks{$^{\ddag}$Nan Li is  partially supported by PSC-CUNY Grants 61533-0049.}
\subjclass{Primary 32V05, 32V20; Secondary 53C56.}
\keywords{Yau uniformization conjecture, H\"{o}rmander L$^{2}$-theory, CR Three-circle
theorem, Cheeger-Colding theory, Tangent cone, CR heat flow, Sasakian manifold.}

\begin{abstract}
In this paper, we show that there exists a nonconstant CR holomorphic function
of polynomial growth in a complete noncompact Sasakian manifold of nonnegative
pseudohermitian bisectional curvature with the CR maximal volume growth
property. This is the very first step toward the CR analogue of Yau
uniformization conjecture which states that any complete noncompact Sasakian
manifold of positive pseudohermitian bisectional curvature is CR biholomorphic
to the standard Heisenberg group.

\end{abstract}
\maketitle

\section{Introduction}

In K\"{a}hler geometry, Yau proposed a variety of uniformization-type problems
on complete noncompact K\"{a}hler manifolds with nonnegative holomorphic
bisectional curvature. The first Yau's uniformization conjecture is that

\begin{conjecture}
\label{conj1} If $M$ is a complete noncompact $m$-dimensional K\"{a}hler
manifold with nonnegative holomorphic bisectional curvature, then
\[
\dim_{%
%TCIMACRO{\U{2102} }%
%BeginExpansion
\mathbb{C}
%EndExpansion
}\left(  \mathcal{O}_{d}\left(  M^{m}\right)  \right)  \leq\dim_{%
%TCIMACRO{\U{2102} }%
%BeginExpansion
\mathbb{C}
%EndExpansion
}\left(  \mathcal{O}_{d}\left(
%TCIMACRO{\U{2102} }%
%BeginExpansion
\mathbb{C}
%EndExpansion
^{m}\right)  \right)  .
\]
The equality holds if and only if $M$ is isometrically biholomorphic to $%
%TCIMACRO{\U{2102} }%
%BeginExpansion
\mathbb{C}
%EndExpansion
^{m}$. Here $\mathcal{O}_{d}\left(  M^{m}\right)  $ denotes the family of all
holomorphic functions on a complete $m$-dimensional K\"{a}hler manifold $M$ of
polynomial growth of degree at most $d$.
\end{conjecture}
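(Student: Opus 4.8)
The plan is to prove the sharp dimension estimate by a filtration-by-vanishing-order argument driven by a Kähler three-circle theorem, and then to extract the rigidity statement from the equality case of that theorem together with tangent-cone analysis. First I record the model computation: on $\mathbb{C}^m$ every holomorphic function of polynomial growth is a polynomial, and those of growth degree at most $d$ are exactly the polynomials of degree at most $\lfloor d\rfloor$, so by the hockey-stick identity
\[
\dim_{\mathbb{C}}\mathcal{O}_d(\mathbb{C}^m)=\sum_{\ell=0}^{\lfloor d\rfloor}\binom{\ell+m-1}{m-1}=\binom{\lfloor d\rfloor+m}{m}.
\]
The task is therefore to bound $\dim_{\mathbb{C}}\mathcal{O}_d(M^m)$ by this number and to characterize when it is attained.

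The central analytic tool I would establish is the \emph{three-circle theorem}: if $M$ has nonnegative holomorphic bisectional curvature and $f$ is holomorphic on a ball $B(p,R)$, then, writing $M_f(r):=\sup_{\overline{B(p,r)}}|f|$, the function $\log M_f(r)$ is convex in $\log r$ on $(0,R)$. This is where the curvature hypothesis is genuinely used: the subharmonicity (indeed plurisubharmonicity) of $\log|f|$, combined with Laplacian comparison for the distance under $\mathrm{Ric}\ge 0$ and the stronger bisectional bound controlling the complex Hessian of $r$, yields the convexity via the maximum principle, exactly paralleling the classical Hadamard statement. Letting $R\to\infty$, log-convexity makes the frequency $\tfrac{d\log M_f}{d\log r}$ a nondecreasing function of $\log r$.

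From the three-circle theorem the dimension bound follows formally. For $f\in\mathcal{O}_d(M)$ the left-hand slope of $\log M_f$ as $r\to 0$ equals the vanishing order $\mathrm{ord}_p(f)$ in holomorphic normal coordinates, while the growth hypothesis forces the slope as $r\to\infty$ to be at most $d$; monotonicity of the slope then gives $\mathrm{ord}_p(f)\le d$. Now fix any finite-dimensional $V\subseteq\mathcal{O}_d(M)$ and filter it by $V_\ell=\{f\in V:\mathrm{ord}_p(f)\ge \ell\}$. The leading-term map sending $f\in V_\ell$ to the degree-$\ell$ homogeneous part of its Taylor expansion at $p$ is well defined on the quotient $V_\ell/V_{\ell+1}$ and injective into the space of homogeneous holomorphic polynomials of degree $\ell$ on $T_pM\cong\mathbb{C}^m$. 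Since $\mathrm{ord}_p(f)\le d$ forces $V_{\lfloor d\rfloor+1}=0$, summing these injections yields
\[
\dim_{\mathbb{C}}V=\sum_{\ell=0}^{\lfloor d\rfloor}\dim_{\mathbb{C}}\bigl(V_\ell/V_{\ell+1}\bigr)\le\sum_{\ell=0}^{\lfloor d\rfloor}\binom{\ell+m-1}{m-1}=\binom{\lfloor d\rfloor+m}{m},
\]
and since $V$ was arbitrary this is the asserted estimate.

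For the rigidity I would analyze the equality case. If $\dim_{\mathbb{C}}\mathcal{O}_d(M)=\dim_{\mathbb{C}}\mathcal{O}_d(\mathbb{C}^m)$ for some $d\ge 1$, then every filtration step above must be surjective; in particular $M$ carries $m$ holomorphic functions $f_1,\dots,f_m$ of linear growth with independent differentials at $p$, and the three-circle inequality is saturated, which I expect to force the rigid profile $\log M_f(r)=\mathrm{ord}_p(f)\,\log r+\mathrm{const}$ and hence, through the equality discussion in the maximum principle, a cone structure together with maximal volume growth. I would then invoke Cheeger–Colding theory to identify the tangent cone at infinity with $\mathbb{C}^m$ and use Hörmander's $L^2$-estimate and gradient estimates to show that $F=(f_1,\dots,f_m)$ is a proper, nondegenerate holomorphic map realizing an isometric biholomorphism $M\cong\mathbb{C}^m$. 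The main obstacle lies entirely outside the formal counting and is twofold: first, the \emph{unconditional} three-circle theorem, whose proof with no a priori volume assumption is precisely the point at which nonnegative bisectional curvature must be exploited in full since the naive comparison controls only the radial Laplacian; and second, the equality case, where converting the soft saturation of the dimension bound into pointwise geometric rigidity and then running the Cheeger–Colding and $L^2$ machinery to recover $\mathbb{C}^m$ exactly is where I expect the bulk of the technical difficulty to concentrate.
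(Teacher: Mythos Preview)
The paper does not prove this statement; Conjecture~\ref{conj1} is stated as historical background and its proof is attributed to Ni \cite{n1} (via a monotonicity formula for the heat equation, under a maximal volume growth assumption), then to Chen--Fu--Yin--Zhu \cite{cfyz} (removing that assumption), and later to Liu \cite{liu1} (via the three-circle theorem, even under the weaker hypothesis of nonnegative holomorphic sectional curvature). So there is no ``paper's own proof'' to compare against.

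Your sketch follows Liu's three-circle route rather than Ni's heat-flow monotonicity. For the inequality this is a legitimate and by now standard argument: the three-circle theorem gives that $r\mapsto M_f(kr)/M_f(r)$ is nondecreasing, which forces $\mathrm{ord}_p(f)\le d$, and the filtration-by-vanishing-order count is correct. One caution: your description of \emph{how} the three-circle theorem is proved (``Laplacian comparison for the distance under $\mathrm{Ric}\ge 0$ and the stronger bisectional bound controlling the complex Hessian of $r$'') is not quite the mechanism Liu uses; the actual proof runs through a Hessian comparison for $\log|f|$ in a holomorphic disc together with the bisectional curvature bound, not through radial comparison for the distance function. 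For the rigidity you have only an outline, and you correctly flag that this is where the real work lies; in the cited literature the equality case is handled by combining the sharp monotonicity with heat-flow and tangent-cone techniques rather than by a direct saturation argument for the maximum principle.
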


In \cite{n1}, Ni established the validity of this conjecture by deriving the
monotonicity formula for the heat equation under the assumption that $M$ has
maximal volume growth
\[
\lim_{r\rightarrow+\infty}\frac{Vol\left(  B\left(  p,r\right)  \right)
}{r^{2m}}\geq\alpha
\]
for a fixed point $p$ and a positive constant $\alpha$. Later, in \cite{cfyz},
the authors improved Ni's result without the assumption of maximal volume
growth. In recent years, G. Liu (\cite{liu1}) generalized the sharp dimension
estimate by only assuming that $M$ admits nonnegative holomorphic sectional curvature.

The second Yau's uniformization conjecture is that

\begin{conjecture}
\label{conj2} If $M$ is a complete noncompact $m$-dimensional K\"{a}hler
manifold with nonnegative holomorphic bisectional curvature, then the ring
$\mathcal{O}_{P}\left(  M\right)  $ of all holomorphic functions of polynomial
growth is finitely generated.
\end{conjecture}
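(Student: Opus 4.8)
The plan is to prove finite generation by using the polynomial-growth holomorphic functions to realize $M$ as a proper, finite-to-one image of an affine algebraic variety, and then to conclude by a Noetherian argument. The starting point is that nonnegative holomorphic bisectional curvature forces nonnegative Ricci curvature, so Cheeger--Colding theory applies to the rescaled family $(M,p,r_i^{-2}g)$ with $r_i\rightarrow+\infty$: after passing to a subsequence one obtains a pointed Gromov--Hausdorff limit $M_\infty=C(Y)$, a metric cone, the tangent cone at infinity. The guiding principle is that the algebraic structure of $C(Y)$ (as a normal affine variety, in the spirit of Donaldson--Sun and Cheeger--Colding--Tian) can be transported back to $M$ across a sequence of rescalings. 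No lower volume bound is assumed at any stage.

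First I would establish the three-circle theorem: for every holomorphic $f$ on $M$, the function $\log\max_{\overline{B(p,r)}}|f|$ is convex and nondecreasing in $\log r$. This is the analytic manifestation of nonnegative bisectional curvature, proved by a maximum-principle argument via the Bochner formula along the CR/K\"ahler heat flow. The three-circle theorem assigns to each $f\in\mathcal{O}_P(M)$ a well-defined degree $d(f)$, makes $\mathcal{O}_P(M)=\bigcup_d\mathcal{O}_d(M)$ a filtered ring, and---combined with Ni's heat-equation monotonicity---yields the finiteness and polynomial bound $\dim_{\mathbb{C}}\mathcal{O}_d(M)\leq C(1+d)^m$. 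Finiteness of each graded piece is necessary, but by itself insufficient for finite generation, so genuine geometric input is required to produce enough functions.

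Next I would construct holomorphic functions of polynomial growth via the H\"ormander $L^2$-estimate for $\bar\partial$. On a large rescaled ball that is Gromov--Hausdorff close to a ball in $C(Y)$, I would take coordinate functions of the affine-variety structure on the tangent cone, multiply them by a cutoff $\chi$, and solve $\bar\partial u=\bar\partial(\chi f_{\mathrm{cone}})$ with weighted $L^2$ control; the smallness of the error together with the convexity supplied by the three-circle theorem guarantees that the resulting honest holomorphic functions on $M$ have controlled degree and approximate the cone coordinates. Iterating across scales produces finitely many $f_1,\dots,f_N\in\mathcal{O}_P(M)$ that separate points and tangent directions, whose associated map $F=(f_1,\dots,f_N)\colon M\rightarrow\mathbb{C}^N$ is proper. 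Properness together with the polynomial growth of the $f_i$ forces the image $V=F(M)$ to be an affine algebraic variety (a Remmert--Stein/Chow-type argument), and the degree control from the three-circle theorem shows that every polynomial-growth holomorphic function on $M$ is a finite module element over the coordinate ring $\mathbb{C}[V]$; since $\mathbb{C}[V]$ is a finitely generated Noetherian $\mathbb{C}$-algebra and $\mathcal{O}_P(M)$ is a finite module over it, $\mathcal{O}_P(M)$ is itself a finitely generated $\mathbb{C}$-algebra.

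The hard part will be the construction step precisely because no volume lower bound is available: when maximal volume growth fails, the tangent cone may drop complex dimension and be badly singular, so the H\"ormander construction cannot be run on a fixed good ball but must be fed through the full Cheeger--Colding almost-rigidity and cone-splitting analysis, with careful treatment of the singular set of $C(Y)$. One must then run a delicate induction on the complex dimension $m$ to certify that, at some finite scale, enough independent functions of bounded degree are produced to make $F$ proper and finite-to-one. This dimension-reduction across scales, rather than the three-circle estimate or the concluding commutative-algebra step, is where the essential difficulty concentrates.
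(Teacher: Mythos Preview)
The paper does not prove this statement. Conjecture~\ref{conj2} appears only as background: the authors record it as one of Yau's uniformization problems, state that it was ``solved completely by G.~Liu (\cite{liu2}),'' and summarize his ingredients in a single sentence---Cheeger--Colding theory, Ni--Tam heat-flow methods, H\"ormander $L^2$-estimates for $\bar\partial$, and the three-circle theorem. No proof, sketch, or further detail is offered. The paper's own contribution is Theorem~\ref{T1}, which concerns the \emph{Sasakian} analogue and establishes only the existence of a single nonconstant CR-holomorphic function of polynomial growth under a maximal volume growth hypothesis; it is explicitly presented as a first step toward the CR versions (Conjectures~\ref{conj4} and~\ref{conj5}), not as a proof of the K\"ahler Conjecture~\ref{conj2}.

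Your outline does invoke the same toolkit the paper attributes to Liu, so you are in the right neighborhood. But since there is no proof in this paper to compare against, there is nothing here to validate or refute your specific strategy---in particular the endgame via a proper finite map onto an affine variety followed by a Noetherian module argument. That route is closer in spirit to Mok's earlier embedding theorems under additional decay/growth hypotheses, and whether it matches Liu's actual argument in \cite{liu2} (which proceeds without any volume growth assumption) is something you would have to check against that reference directly, not against the present paper.
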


This one was solved completely by G. Liu (\cite{liu2}) quite recently. He
mainly deployed techniques to attack this conjecture via Cheeger-Colding
(\cite{chco1}, \cite{chco2}), methods of heat flow developed by Ni and Tam
(\cite{n1}, \cite{nt1}, \cite{nt4}), H\"{o}rmander $L^{2}$-estimate of
$\overline{\partial}$ (\cite{de}) and three-circle theorem (\cite{liu1}) as well.

The third Yau's uniformization conjecture is that

\begin{conjecture}
\label{conj3} If $M$ is a complete noncompact $m$-dimensional K\"{a}hler
manifold with positive holomorphic bisectional curvature, then $M$ is
biholomorphic to the standard $m$-dimensional complex Euclidean space $%
%TCIMACRO{\U{2102} }%
%BeginExpansion
\mathbb{C}
%EndExpansion
^{m}$.
\end{conjecture}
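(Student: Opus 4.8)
\textbf{A proof strategy for Conjecture \ref{conj3}.}
The plan is to deduce the biholomorphism from the sharp dimension estimate of Conjecture \ref{conj1} (now a theorem), by manufacturing out of the positive curvature enough holomorphic functions of linear growth to build a proper map to $\mathbb{C}^{m}$. First I would record the standard structural consequences of $M$ being complete, noncompact and of nonnegative holomorphic bisectional curvature: $M$ is Stein and has at most Euclidean volume growth. The decisive extra input I need is that \emph{strict} positivity of the bisectional curvature upgrades this to \emph{maximal} volume growth, $\liminf_{r\to\infty}\mathrm{Vol}(B(p,r))/r^{2m}=\alpha>0$; I would try to establish this through the Ni--Tam heat-flow (or K\"{a}hler--Ricci flow) machinery, deforming the metric so as to exploit the positivity while keeping the volume ratio under control. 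Granting maximal volume growth, the hypotheses of Conjecture \ref{conj1} are satisfied, so $\dim_{\mathbb{C}}\mathcal{O}_{d}(M)\le\dim_{\mathbb{C}}\mathcal{O}_{d}(\mathbb{C}^{m})$ for every $d$, and what remains is to realize this bound geometrically.

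Second, I would analyze the asymptotic geometry. Maximal volume growth together with Cheeger--Colding theory forces every tangent cone at infinity $M_{\infty}$ to be a metric cone; combining this with the complex structure --- in the spirit of the arguments behind Conjectures \ref{conj1} and \ref{conj2}, i.e.\ Cheeger--Colding--Tian regularity together with Donaldson--Sun/Liu-type algebraization --- one identifies $M_{\infty}$ with a normal affine variety, and curvature positivity should rule out any conical singularity, so that $M_{\infty}\cong\mathbb{C}^{m}$. Then, using the three-circle theorem of \cite{liu1} together with the H\"{o}rmander $L^{2}$-estimate for $\overline{\partial}$ (\cite{de}), I would lift the $m$ linear coordinates on $M_{\infty}$ to holomorphic functions $f_{1},\dots,f_{m}\in\mathcal{O}_{1}(M)$; controlling their growth by the three-circle theorem shows that $F=(f_{1},\dots,f_{m})\colon M\to\mathbb{C}^{m}$ is proper, and the $L^{2}$-construction can be arranged so that $\det dF\not\equiv0$.

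Third, I would promote $F$ to a biholomorphism: properness makes $F$ a finite branched covering of $\mathbb{C}^{m}$, and a degree count --- using either the volume ratio $\alpha$ on large balls or the equality discussion in Conjecture \ref{conj1} applied to the subalgebra generated by $f_{1},\dots,f_{m}$ --- forces $\deg F=1$ and empties the branch locus, so that $F$ is a biholomorphism $M\cong\mathbb{C}^{m}$ (and, if one also checks $\dim_{\mathbb{C}}\mathcal{O}_{d}(M)=\dim_{\mathbb{C}}\mathcal{O}_{d}(\mathbb{C}^{m})$, an isometry).

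The hard part is the very first reduction. There is no soft metric mechanism by which positive --- as opposed to merely nonnegative --- holomorphic bisectional curvature must produce maximal volume growth, and in the absence of any curvature-decay hypothesis this is precisely where Conjecture \ref{conj3} is still open in general; the subsequent steps are, by contrast, essentially available in the literature once one is handed maximal volume growth and the identification $M_{\infty}\cong\mathbb{C}^{m}$, so the whole difficulty of the uniformization is concentrated in extracting Euclidean volume growth from the curvature sign alone.
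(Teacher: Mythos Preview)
The paper does not prove Conjecture~\ref{conj3}. It is stated there purely as background --- the third of Yau's uniformization conjectures --- and the paper's own contribution is entirely on the CR side (Theorem~\ref{T1}, the existence of a nonconstant CR-holomorphic function of polynomial growth on a Sasakian manifold with nonnegative pseudohermitian bisectional curvature and CR maximal volume growth). The discussion following Conjecture~\ref{conj3} in the paper is a literature survey: Siu--Yau and Mok under curvature decay and maximal volume growth, Chau--Tam under bounded curvature and maximal volume growth, and the recent confirmations by Liu~\cite{liu3} and Lee--Tam under maximal volume growth. There is no proof in the paper to compare your proposal against.

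That said, your outline is a reasonable sketch of the Liu-style approach to the \emph{maximal volume growth} case, and you are honest about where it breaks: the reduction from positive bisectional curvature to maximal volume growth is not available, and you correctly flag this as the point where the general conjecture remains open. Your second and third steps (tangent cone analysis, three-circle control, H\"{o}rmander lifting, properness and degree-one argument) are broadly in line with what \cite{liu3} actually does once maximal volume growth is assumed, though the identification $M_{\infty}\cong\mathbb{C}^{m}$ is itself nontrivial and not simply a consequence of ``ruling out conical singularities by positivity'' --- the tangent cone only sees nonnegativity, not strict positivity, at infinity. So even granting maximal volume growth, that step needs more than you have written. In short: this is not a proof, and the paper does not claim one either.
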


The first giant progress relating to the third conjecture could be attributed
to Siu-Yau and Mok. In their papers (\cite{msy}, \cite{m1}, \cite{m2}), they
showed that, under the assumptions of the maximum volume growth condition and
the scalar curvature decays in certain rate, a complete noncompact
$m$-dimensional K\"{a}hler manifold $M$ with nonnegative holomorphic
bisectional curvature is isometrically biholomorphic to $%
%TCIMACRO{\U{2102} }%
%BeginExpansion
\mathbb{C}
%EndExpansion
^{m}$. A Riemannian version was solved in \cite{gw2} shortly afterwards. Since
then there are several further works aiming to prove the optimal result and
reader is referred to \cite{m2}, \cite{ctz}, \cite{cz}, \cite{n2}, \cite{nt1},
\cite{nt2} and \cite{nst}. For example, A. Chau and L. F. Tam (\cite{ct})
proved that a complete noncompact K\"{a}hler manifold with bounded nonnegative
holomorphic bisectional curvature and maximal volume growth is biholomorphic
to $%
%TCIMACRO{\U{2102} }%
%BeginExpansion
\mathbb{C}
%EndExpansion
^{m}$. Recently, G. Liu (\cite{liu3}) confirmed Yau's uniformization
conjecture when $M$ has maximal volume growth. Later, M.-C. Lee and L.-F. Tam
(\cite{lt}) also confirmed Yau's uniformization conjecture with the maximal
volume growth condition.

A Sasakian manifold is a strictly pseudoconvex CR $(2n+1)$-manifold of
vanishing pseudohermitian torsion which is an odd dimensional counterpart of
K\"{a}hler geometry. It is known (\cite{hs}) that a simply connected closed
Sasakian manifold with positive pseudohermitian bisectional curvature is CR
equivalent to the standard CR sphere $S^{2n+1}$. So it is very natural to
concerned with an CR analogue of Yau uniformization conjectures in a complete
noncompact Sasakian $(2n+1)$-manifold with positive pseudohermitian
bisectional curvature. More precisely, a smooth complex-valued function on a
strictly pseudoconvex CR $\left(  2n+1\right)  $-manifold $\left(
M,J,\theta\right)  $ is called CR-holomorphic if%
\[
\overline{\partial}_{b}f=0.
\]
For any fixed point $x\in M$, a CR-holomorphic function $f$ is called to be of
polynomial growth if there are a nonnegative number $d$ and a positive
constant $C=C\left(  x,d,f\right)  $, depending on $x$, $d$ and $f$, such that%
\[
\left\vert f\left(  y\right)  \right\vert \leq C\left(  1+d_{cc}\left(
x,y\right)  \right)  ^{d}%
\]
for all $y\in M$, where $d_{cc}\left(  x,y\right)  $ denotes the
Carnot-Caratheodory distance between $x$ and $y.$ In the following, we
sometimes would use the notation $r\left(  x,y\right)  $ for the
Carnot-Caratheodory distance. In fact, the definition above is independent of
the choice of the point $x\in M$. Finally we denote $\mathcal{O}_{d}^{CR}(M)$
the family of all CR-holomorphic functions $f$ of polynomial growth of degree
at most $d$ with $Tf(x)=f_{0}(x)=0:$
\[
\mathcal{O}_{d}^{CR}(M)=\{f(x)\ |\overline{\partial}_{b}f(x)=0,\ f_{0}%
(x)=0\text{ \textrm{and } }|f(x)|\leq C\left(  1+d_{cc}\left(  x,y\right)
\right)  ^{d}\text{ }\ \}.
\]

In the recent paper, we affirmed the first
%TCIMACRO{\QTR{frametitle}{CR Yau uniformization conjecture on Sasakian
%manifolds as following.}}%
%BeginExpansion
\title{CR Yau uniformization conjecture on Sasakian manifolds as
following.}%
%EndExpansion

\begin{proposition}
\label{P1} (\cite{chl1}, \cite{chl2}) If $(M,J,\theta)$ is a complete
noncompact Sasakian $(2n+1)$-manifold of nonnegative pseudohermitian
bisectional curvature, then%
\[
\dim_{%
%TCIMACRO{\U{2102} }%
%BeginExpansion
\mathbb{C}
%EndExpansion
}\left(  \mathcal{O}_{d}^{CR}\left(  M\right)  \right)  \leq\dim_{%
%TCIMACRO{\U{2102} }%
%BeginExpansion
\mathbb{C}
%EndExpansion
}\left(  \mathcal{O}_{d}^{CR}\left(  \mathbf{H}_{n}\right)  \right)
\]
with equality for some positive integer $d$ if and only if $M^{2n+1}$ is CR
equivalent to $\mathbf{H}_{n}.$ Here $\mathbf{H}_{n}$ = $%
%TCIMACRO{\U{2102} }%
%BeginExpansion
\mathbb{C}
%EndExpansion
^{n}\times%
%TCIMACRO{\U{211d} }%
%BeginExpansion
\mathbb{R}
%EndExpansion
$ is the $(2n+1)$-dimensional Heisenberg group.
\end{proposition}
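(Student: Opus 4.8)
The plan is to transplant Ni's heat-flow monotonicity \cite{n1}, G. Liu's three-circle theorem \cite{liu1} and the Cheeger--Colding tangent-cone analysis from the K\"ahler setting to the transverse K\"ahler structure underlying a Sasakian manifold. The guiding observation is that a CR-holomorphic function $f$ with $f_{0}=Tf=0$ is invariant along the Reeb flow, so it descends locally to a holomorphic function on the $n$-dimensional K\"ahler quotient; consequently $\mathcal{O}_{d}^{CR}(M)$ should obey the same numerology as $\mathcal{O}_{d}(\mathbb{C}^{n})$, and indeed $\dim_{\mathbb{C}}\mathcal{O}_{d}^{CR}(\mathbf{H}_{n})=\binom{n+d}{n}=\sum_{k=0}^{d}\binom{n+k-1}{n-1}$, the $k$-th summand counting the transversally homogeneous pieces of degree $k$. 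The argument splits into four parts: (i) a CR three-circle theorem; (ii) a well-defined nonnegative integer order for each $f$; (iii) the dimension bound obtained by filtering by order and comparing graded pieces with the model; (iv) the rigidity in the equality case.

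For step (i), set $M_{f}(r)=\sup_{B_{cc}(p,r)}|f|$; I would show that $\log M_{f}(r)$ is convex in $\log r$. The analytic engine is that $\log|f|$ is CR-plurisubharmonic and that nonnegative pseudohermitian bisectional curvature yields, through the CR Bochner--Weitzenb\"ock formula (the vanishing of the pseudohermitian torsion kills the dangerous torsion terms, which is exactly why we restrict to Sasakian manifolds) together with a sub-Laplacian comparison for $d_{cc}$, a comparison of sub-mean values of CR-plurisubharmonic functions on $M$ with those on the flat model $\mathbf{H}_{n}$; equivalently, one runs the CR heat flow $\partial_{t}u=\Delta_{b}u$ with $u(\cdot,0)=|f|^{2}$ and extracts a parabolic monotonicity formula following Ni, the commutator $[\Delta_{b},T]$ being under control since $T$ is an infinitesimal automorphism of the Sasakian structure. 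This is essentially the content of \cite{chl1}, \cite{chl2}. Granting (i), the order $d(f):=\lim_{r\to\infty}\frac{\log M_{f}(r)}{\log r}$ exists, is finite, equals the supremum of the Hadamard quotient over $r$, and satisfies $d(f)\le d$ for $f\in\mathcal{O}_{d}^{CR}(M)$. That $d(f)$ is a nonnegative integer follows by passing to a tangent cone at infinity of $(M,d_{cc},p)$ --- which exists because the curvature hypothesis gives a CR volume comparison and hence Gromov precompactness of the rescalings $(M,\lambda^{-1}d_{cc},p)$ --- and observing that the leading term of $f$ descends to a transversally homogeneous CR-holomorphic function of degree $d(f)$ on the cone, which carries a transverse K\"ahler structure admitting only integral homogeneities. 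Without the CR maximal volume growth hypothesis one works instead with a sequence of rescalings and leans on the convexity from (i) and the Cheeger--Colding structure theory, in the spirit of \cite{cfyz} and \cite{liu1}.

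For step (iii), let $V\subseteq\mathcal{O}_{d}^{CR}(M)$ be finite-dimensional and let $0\le d_{1}<\cdots<d_{\ell}\le d$ be the orders realized by its nonzero elements. Filtering by $V_{\le t}=\{f\in V:\ d(f)\le t\}$ gives $\dim V=\sum_{i}\dim\big(V_{\le d_{i}}/V_{<d_{i}}\big)$, and passing to leading terms on the tangent cone defines, for each $i$, an injection of $V_{\le d_{i}}/V_{<d_{i}}$ into the space of transversally homogeneous degree-$d_{i}$ CR-holomorphic functions on the cone; a Ni-type estimate on the cone bounds this last space by the space of homogeneous degree-$d_{i}$ holomorphic polynomials on $\mathbb{C}^{n}$, of dimension $\binom{n+d_{i}-1}{n-1}$. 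Summing, $\dim V\le\sum_{k=0}^{d}\binom{n+k-1}{n-1}=\binom{n+d}{n}=\dim_{\mathbb{C}}\mathcal{O}_{d}^{CR}(\mathbf{H}_{n})$, which is the asserted inequality.

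For step (iv), equality for some $d\ge 1$ forces every graded piece to have maximal dimension; in particular $\dim_{\mathbb{C}}\mathcal{O}_{1}^{CR}(M)=n+1$ and the tangent cone must itself be $\mathbf{H}_{n}$, so $M$ has CR maximal volume growth and the cone splits off the Reeb direction. One then uses the $n$ independent CR-holomorphic functions of linear growth, together with the Reeb coordinate, to build a CR map $F:M\to\mathbf{H}_{n}=\mathbb{C}^{n}\times\mathbb{R}$; maximality of every graded piece shows $F$ is a CR immersion, and a H\"ormander $L^{2}$-construction of CR-holomorphic functions on $M$ with prescribed growth, combined with volume comparison, shows $F$ is proper, hence a CR biholomorphism. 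I expect the real difficulties to lie in steps (i), (ii) and (iv): the sub-Laplacian is only subelliptic, the Carnot--Carath\'eodory distance is not Riemannian-smooth, and the Bochner identity produces extra $T$-derivative and commutator terms, so that establishing the three-circle theorem, proving integrality of the orders, and upgrading the equality case from a local CR embedding to a global CR biholomorphism in this subelliptic setting is where the genuine work lies; the dimension bookkeeping in (iii), by contrast, is formally parallel to the K\"ahler argument once (i) and (ii) are available.
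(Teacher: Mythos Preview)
The paper does not prove Proposition~\ref{P1}; it is stated with a citation to \cite{chl1}, \cite{chl2} and is used as background for the paper's actual goal (Theorem~\ref{T1}). So there is no ``paper's own proof'' to compare against here beyond what the paper attributes to those references: a CR three-circle theorem (stated as Theorem~\ref{TA}) and heat-flow monotonicity in the spirit of Ni.

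Your outline is broadly in line with that attribution: step~(i) is exactly the content of \cite{chl2}/Theorem~\ref{TA}, and the filtration-by-order argument in step~(iii) is the standard way to pass from a three-circle/monotonicity statement to the dimension bound, as in \cite{n1}, \cite{cfyz}, \cite{liu1}. Two remarks on where your sketch diverges from, or overreaches, what the cited sources actually do. First, for step~(ii) you invoke tangent cones at infinity and Cheeger--Colding theory to get integrality of the order. That is heavier than necessary and mixes in the machinery of the \emph{present} paper (developed for a different purpose, namely producing nonconstant CR-holomorphic functions under maximal volume growth). In the three-circle framework, integrality of $d(f)$ follows more directly: once $\log M_{f}(r)$ is convex in $\log r$, the ratio $M_{f}(2r)/M_{f}(r)$ is monotone, and comparing with the model $\mathbf{H}_{n}$ (where the ratio is $2^{k}$ for a transversally homogeneous function of degree $k$) forces the limiting order to be an integer, with no tangent-cone analysis required; this is how \cite{liu1} and \cite{chl2} proceed. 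Second, note that Theorem~\ref{TA} as recorded in this paper is stated under nonnegative pseudohermitian \emph{sectional} curvature, whereas Proposition~\ref{P1} assumes nonnegative \emph{bisectional} curvature; the sharp dimension estimate under the bisectional hypothesis is attributed to \cite{chl1}, which goes through the CR heat-flow monotonicity (as in Ni) rather than solely through the three-circle theorem. Your sketch conflates the two routes slightly; both are valid, but they correspond to the two different citations.

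For step~(iv) your plan is reasonable in spirit but underspecified: the genuine input in the equality case, following \cite{n1} and its CR analogue in \cite{chl1}, is that equality in the monotonicity formula forces the transverse Hessian of the heat evolute of $|f|^{2}$ to be a scalar multiple of the Levi metric, from which one reads off that $(M,J,\theta)$ is CR-flat and hence the Heisenberg group; the H\"ormander $L^{2}$ construction you propose is again machinery from the present paper aimed at a different target.
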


Now we state the second and third CR Yau uniformization conjectures :

\begin{conjecture}
\label{conj4} If $M$ is a complete noncompact Sasakian $(2n+1)$-manifold of
nonnegative peudohermitian bisectional curvature, then the ring $\mathcal{O}%
_{p}^{CR}(M)$ of all CR holomorphic functions of polynomial growth is finitely generated.
\end{conjecture}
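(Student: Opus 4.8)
The plan is to transport G. Liu's strategy for the second K\"{a}hler uniformization conjecture (\cite{liu2}) to the Sasakian category, replacing each analytic ingredient by its CR counterpart. The guiding principle is that finite generation of $\mathcal{O}_{p}^{CR}(M)$ should follow from finite generation of the associated graded ring attached to the natural degree filtration, which in turn will be identified with the coordinate ring of the tangent cone of $M$ at infinity. Concretely, for $f\in\mathcal{O}_{p}^{CR}(M)$ I set
\[
d(f)=\limsup_{r\to\infty}\frac{\log\big(\max_{d_{cc}(x,y)\le r}|f(y)|\big)}{\log r}.
\]
First I would invoke the CR three-circle theorem to show that $d(\cdot)$ is a genuine order function: the $\limsup$ is a limit, it is attained, $d(fg)=d(f)+d(g)$, and $d(f+g)\le\max\{d(f),d(g)\}$. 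This exhibits $\mathcal{O}_{p}^{CR}(M)$ as a filtered ring with pieces $\mathcal{O}_{d}^{CR}(M)$, and the sharp dimension estimate of Proposition \ref{P1} guarantees that every graded piece $\mathcal{O}_{d}^{CR}(M)/\mathcal{O}_{<d}^{CR}(M)$ is finite dimensional. By the standard commutative-algebra fact that a $\mathbb{Z}_{\ge0}$-filtered $\mathbb{C}$-algebra with finite-dimensional graded pieces is finitely generated whenever its associated graded $\mathrm{gr}\,\mathcal{O}_{p}^{CR}(M):=\bigoplus_{d}\mathcal{O}_{d}^{CR}(M)/\mathcal{O}_{<d}^{CR}(M)$ is, the problem reduces to proving that $\mathrm{gr}\,\mathcal{O}_{p}^{CR}(M)$ is a finitely generated $\mathbb{C}$-algebra.

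Next I would analyze the tangent cone at infinity. Since nonnegative pseudohermitian bisectional curvature forces a lower bound on the Tanaka--Webster Ricci curvature, the Carnot--Carath\'{e}odory metric satisfies a Bishop--Gromov type volume comparison, so Cheeger--Colding theory applies and each blow-down $(M,x,r_{i}^{-1}d_{cc})$ subconverges in the pointed Gromov--Hausdorff sense to a metric cone $M_{\infty}=C(Y)$. The crucial structural claim, to be obtained by combining CR heat-flow smoothing with the three-circle monotonicity in a Donaldson--Sun type argument adapted to the CR category, is that $M_{\infty}$ carries the structure of a Sasakian cone whose ring of CR-holomorphic functions of polynomial growth is the coordinate ring of a normal affine variety $V$. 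Being an affine variety, $\mathbb{C}[V]$ is automatically finitely generated and Noetherian, which is what we ultimately wish to pull back to $M$.

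I would then construct the degree-preserving leading-term homomorphism $\Phi\colon\mathrm{gr}\,\mathcal{O}_{p}^{CR}(M)\to\mathbb{C}[V]$ sending the class of $f$ to its homogeneous rescaled limit along the blow-down. Injectivity of $\Phi$ follows from the three-circle theorem, since a nonzero leading term cannot vanish identically on the cone. Surjectivity is the analytic heart of the argument: given a homogeneous element of $\mathbb{C}[V]$, I would first approximate it by a CR-holomorphic function on large annuli of $M$ and then solve $\overline{\partial}_{b}u=\eta$ with a weight tailored to the prescribed polynomial growth, using the CR H\"{o}rmander $L^{2}$-estimate for the Kohn Laplacian $\square_{b}$, to correct the approximation into a genuine element of $\mathcal{O}_{p}^{CR}(M)$ with the prescribed leading term; the weight together with the sharp dimension estimate forces the correction to have strictly smaller degree. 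Once $\Phi$ is shown to be an isomorphism, a finite homogeneous generating set of $\mathbb{C}[V]$ lifts through the filtration to a finite generating set of $\mathcal{O}_{p}^{CR}(M)$, completing the proof by the descending induction on degree described above.

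The main obstacle, present already in the K\"{a}hler case but aggravated here, is establishing the algebraicity of the tangent cone and the surjectivity of $\Phi$ in the sub-Riemannian setting. The Carnot--Carath\'{e}odory metric is not Riemannian --- its Hausdorff dimension is $2n+2$ and the Reeb direction behaves anisotropically --- so the Cheeger--Colding and Donaldson--Sun machinery must be reworked to respect the Reeb foliation and the constraint $f_{0}=Tf=0$ defining $\mathcal{O}_{d}^{CR}(M)$. Equally delicate is the CR H\"{o}rmander step: solving $\overline{\partial}_{b}$ with polynomial weights demands subelliptic estimates for $\square_{b}$ that are uniform along the blow-down and controlled across the singular set of $V$, and matching leading terms on a possibly singular limit cone requires careful behaviour of the solutions near those singularities. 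I expect that making the identification $\mathrm{gr}\,\mathcal{O}_{p}^{CR}(M)\cong\mathbb{C}[V]$ rigorous --- the surjectivity of $\Phi$ in particular --- will absorb the bulk of the technical effort.
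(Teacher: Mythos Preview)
The statement you are attempting to prove is labeled \emph{Conjecture} in the paper, and the paper does \emph{not} contain a proof of it. Indeed, the authors explicitly present Conjecture~\ref{conj4} as an open problem; the main result of the paper (Theorem~\ref{T1}/Theorem~\ref{T61}) is only the very first step toward it, namely the existence of a single nonconstant element of $\mathcal{O}_{p}^{CR}(M)$, and even this requires the additional hypothesis of CR maximal volume growth. So there is no ``paper's own proof'' to compare your proposal against.

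That said, your outline is a faithful transcription of Liu's K\"{a}hler strategy \cite{liu2}, and several of its ingredients are precisely the tools this paper develops: the CR three-circle theorem (Theorem~\ref{TA}), the CR H\"{o}rmander $L^{2}$-estimate (Proposition~\ref{P21}), the CR heat-flow smoothing (Theorem~\ref{t1}), and a tangent-cone analysis (Section~3). But you should be aware that the paper does not run Cheeger--Colding theory directly on the Carnot--Carath\'{e}odory metric as you suggest; instead it passes to the family of Riemannian Webster metrics $g_{\lambda}=\tfrac{1}{2}d\theta+\lambda^{2}\theta\otimes\theta$ (equation~(\ref{2020A})), for which $\mathrm{Ric}(g_{\lambda})\geq-2\lambda^{2}$, and lets $\lambda\to 0$ along a carefully chosen sequence (Theorem~\ref{t51}). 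The CC metric has Hausdorff dimension $2n+2$ and is genuinely sub-Riemannian, so Bishop--Gromov and the standard cone-splitting theorems are not available for it in the form you invoke; this is not a cosmetic point but the reason the paper's approach looks the way it does. Your claim that ``Cheeger--Colding theory applies'' to $(M,d_{cc})$ is therefore a gap, and the subsequent assertion that the tangent cone at infinity is a \emph{Sasakian} cone with a normal affine coordinate ring is, as you yourself acknowledge, the core unresolved difficulty. In short, your plan is reasonable as a research program, but it is a program for an open conjecture, not a proof that can be checked against the paper.
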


\begin{conjecture}
\label{conj5} If $M$ is a complete noncompact Sasakian $(2n+1)$-manifold of
positive pseudohermitian bisectional curvature. Then $M$ is CR biholomorphic
to the standard Heisenberg group%
\[
\mathbf{H}_{n}=%
%TCIMACRO{\U{2102} }%
%BeginExpansion
\mathbb{C}
%EndExpansion
^{n}\times%
%TCIMACRO{\U{211d} }%
%BeginExpansion
\mathbb{R}
%EndExpansion
.
\]

\end{conjecture}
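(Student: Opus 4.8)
\emph{Remark on Conjecture \ref{conj5}.} Although a complete resolution is open, we outline the strategy we expect to succeed. The plan is to transplant the blueprint of G.~Liu's and Lee--Tam's resolution of the K\"ahler uniformization conjecture under maximal volume growth (\cite{liu3}, \cite{lt}) into the pseudohermitian category, replacing each analytic ingredient by its CR counterpart. As a first reduction I would impose the CR maximal volume growth hypothesis, so that the statement actually targeted is the CR analogue of \cite{liu3}: a complete noncompact Sasakian $(2n+1)$-manifold of positive pseudohermitian bisectional curvature \emph{with maximal volume growth} is CR biholomorphic to $\mathbf{H}_{n}$. The architecture is to produce, from CR-holomorphic functions of polynomial growth, a CR map $F=(z_{1},\dots,z_{n},w):M\to\mathbf{H}_{n}$ (modeled on the Siegel realization of $\mathbf{H}_{n}$, with the $z_{i}$ of degree $1$ and $w$ of degree $2$) that is proper and injective with nonvanishing Jacobian, and then to invoke the sharp dimension estimate of Proposition \ref{P1} at the relevant degrees to force $F$ to be a CR biholomorphism.

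Second, I would analyze the tangent cone at infinity of $(M,d_{cc})$ by a pseudohermitian Cheeger--Colding theory (\cite{chco1}, \cite{chco2}). Nonnegative pseudohermitian bisectional curvature yields a pseudohermitian Bishop--Gromov comparison, and through the CR heat-flow monotonicity already used to prove the existence theorem of this paper, the tangent cone should be a metric cone; under maximal volume growth together with positivity of the curvature the volume-density gap closes only in the flat model, so the tangent cone is the Heisenberg group $\mathbf{H}_{n}$ itself. The leading homogeneous CR-holomorphic functions on the cone, namely the Siegel coordinates, then furnish the prescribed asymptotic models for the functions to be built on $M$.

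Third, I would construct the components of $F$ by solving $\overline{\partial}_{b}u=\overline{\partial}_{b}(\chi\,z_{i})$ with H\"ormander-type weighted $L^{2}$-estimates for $\overline{\partial}_{b}$ (\cite{de}), where $\chi$ cuts off a lift of a cone coordinate and the weight is a CR-plurisubharmonic exhaustion adapted to the curvature; setting $f=\chi z_{i}-u$ yields a genuine CR-holomorphic function whose growth is pinned, through the CR three-circle theorem (\cite{liu1}), to the degree of its cone model, and likewise for the degree-$2$ function $w$. This delivers the required elements of $\mathcal{O}_{1}^{CR}(M)$ and $\mathcal{O}_{2}^{CR}(M)$. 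Properness and injectivity of $F$ near infinity follow because $F$ is asymptotic to the coordinate map of the cone; global injectivity and nonvanishing of the Jacobian are then obtained by a CR maximum-principle and three-circle argument, after which equality in Proposition \ref{P1} upgrades $F$ to a global CR equivalence with $\mathbf{H}_{n}$.

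The main obstacle is the tangent-cone step, for two reasons specific to the sub-Riemannian setting. First, Cheeger--Colding theory is developed for Riemannian Gromov--Hausdorff limits, whereas $d_{cc}$ is the Carnot--Carath\'eodory distance; one must run the comparison and splitting arguments for an adapted Webster--Riemannian metric and then descend to the contact structure without losing control of the Reeb direction, which has no K\"ahler counterpart. Second, the subellipticity of $\overline{\partial}_{b}$ is weaker than the ellipticity of $\overline{\partial}$, so solving the $\overline{\partial}_{b}$-equation with sharp weighted $L^{2}$-bounds on a noncompact Sasakian manifold requires Kohn-type subelliptic estimates and closed-range results, and the nontrivial interaction between the cokernel of $\overline{\partial}_{b}$ and the Reeb flow is precisely where the K\"ahler proof does not transcribe verbatim. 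Establishing these two ingredients, a pseudohermitian Cheeger--Colding structure theory and a sharp noncompact $L^{2}$-existence theory for $\overline{\partial}_{b}$, is in my view the crux; removing the maximal volume growth assumption afterward would constitute a further, independent difficulty.
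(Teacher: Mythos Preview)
The statement is Conjecture \ref{conj5}, which the paper explicitly leaves open; there is no proof in the paper to compare against. You correctly recognize this and offer a strategic outline rather than a claimed proof, which is the appropriate response.

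Your proposed architecture---tangent-cone analysis via a Webster adapted Riemannian metric, CR H\"ormander $L^{2}$-estimates for $\overline{\partial}_{b}$, CR heat flow, and the CR three-circle theorem, all modeled on Liu's and Lee--Tam's K\"ahler arguments---is precisely the toolkit the paper assembles for its partial result (Theorem \ref{T1}). In fact, the two items you flag as the ``main obstacles'' are already partially addressed here: the paper runs Cheeger--Colding theory on the family of Riemannian Webster metrics $g_{\lambda}$ (Section 3 and Theorem \ref{t51}) rather than directly on $d_{cc}$, exactly as you propose, and a CR H\"ormander $L^{2}$-estimate is established in Proposition \ref{P21} via a transverse Bochner--Kodaira--Nakano identity on the basic complex, which sidesteps the subelliptic closed-range issues you worry about by restricting to basic forms on a Sasakian manifold. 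So your diagnosis of the obstacles is sound, but the paper has taken first steps on both.

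What remains genuinely open, and what your outline does not resolve, is the passage from ``there exists one nonconstant CR-holomorphic function of polynomial growth'' (Theorem \ref{T1}) to ``there exist $n+1$ such functions forming a proper injective CR map to $\mathbf{H}_{n}$ with nonvanishing Jacobian.'' In particular, your appeal to equality in Proposition \ref{P1} to upgrade $F$ to a global CR equivalence presupposes that you already know $\dim_{\mathbb{C}}\mathcal{O}_{d}^{CR}(M)$ attains the Heisenberg value for some $d$, which is essentially the conclusion you want; this step would need an independent argument exploiting strict positivity of the bisectional curvature, and that is where the real work beyond the present paper lies.
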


In our previous papers, by applying the linear trace version of
Li-Yau-Hamilton inequality for positive solutions of the CR
Lichnerowicz-Laplacian heat equation and the CR moment type estimate of the CR
heat equation, we are able to obtain the following CR gap Theorem without
using maximum volume growth condition (\cite{n3}, \cite{cf}, \cite{ccf}, and
\cite{cchl}). In fact, let $M$ be a complete noncompact Sasakian
$(2n+1)$-manifold with nonnegative pseudohermitian bisectional curvature. Then
$M$ is CR flat if
\[
\frac{1}{V_{o}\left(  r\right)  }\int_{B_{cc}\left(  o,r\right)  }R\left(
y\right)  d\mu\left(  y\right)  =o\left(  r^{-2}\right)  ,
\]
for some point $o\in M.$ Here $R\left(  y\right)  $ is the Tanaka-Webster
scalar curvature and $V_{o}\left(  r\right)  $ is the volume of the
Carnot-Carath\'{e}odory ball $B_{cc}\left(  o,r\right)  .$

More recently, in view of Liu's approach toward Yau uniformization
Conjectures, it is important to know if there exists a nonconstant holomorphic
functions of polynomial growth in a complete noncompact $m$-dimensional
K\"{a}hler manifold $M$ of nonnegative holomorphic bisectional curvature and
positive at one point $p$. In fact, it is due to \cite{n1}, \cite{liu5}, and
\cite{liu2} that $\mathcal{O}_{p}\left(  M\right)  \neq%
%TCIMACRO{\U{2102} }%
%BeginExpansion
\mathbb{C}
%EndExpansion
$ if and only if $M$ is of maximal volume growth. Therefore one wishes to work
on Conjecture \ref{conj4} and Conjecture \ref{conj5}, it is important to know
when \
\[
\mathcal{O}_{p}^{CR}\left(  M\right)  \neq%
%TCIMACRO{\U{2102} }%
%BeginExpansion
\mathbb{C}
%EndExpansion
.
\]
In this paper, we deal with this issue as the following :

\begin{theorem}
\label{T1} There exists a nonconstant CR holomorphic function of polynomial
growth in a complete noncompact Sasakian $(2n+1)$-manifold of nonnegative
pseudohermitian bisectional curvature with the CR maximal volume growth
property%
\begin{equation}
\lim_{r\rightarrow+\infty}\frac{Vol\left(  B_{cc}\left(  p,r\right)  \right)
}{r^{2n+2}}\geq\alpha, \label{2020AAA}%
\end{equation}
for a fixed point $p$ and some positive constant $\alpha$. Here $B_{cc}\left(
p,r\right)  $ is the Carnot-Carath\'{e}odory ball in a Sasakian $(2n+1)$-manifold.
\end{theorem}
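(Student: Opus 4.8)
The plan is to adapt G. Liu's approach to the Kähler Yau uniformization conjecture (\cite{liu3}, \cite{liu5}) to the sub-Riemannian Sasakian setting, in three stages: (i) identify a tangent cone at infinity of $(M,d_{cc})$ and equip it with a limiting complex structure; (ii) produce a nonconstant CR-holomorphic function of linear growth on that cone; (iii) transplant it back to $M$ by a weighted $L^{2}$-estimate for $\overline{\partial}_{b}$, and upgrade it to genuine polynomial growth via the CR three-circle theorem. For stage (i): because $(M,J,\theta)$ is Sasakian with nonnegative pseudohermitian bisectional curvature, the transverse Kähler structure has nonnegative holomorphic bisectional curvature, and the Carnot--Carathéodory metric satisfies a Bishop--Gromov-type volume comparison together with volume doubling. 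Feeding this, with the maximal volume growth hypothesis (\ref{2020AAA}), into Cheeger--Colding theory applied to the pointed rescalings $(M,r_{i}^{-1}d_{cc},p)$, we obtain along a subsequence a pointed Gromov--Hausdorff limit $(M_{\infty},d_{\infty},p_{\infty})$ which is a metric cone $C(Y)$ over a compact metric space $Y$ of full Hausdorff dimension $2n+1$; in particular $M_{\infty}$ is not a point. Moreover the transverse Kähler structure rescales to a Kähler-cone structure and the Reeb field passes to the limit, so that $M_{\infty}$ is of Sasakian-cone type with transverse Kähler-cone structure.

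For stage (ii), on the complex cone $M_{\infty}$ we construct a nonconstant function $f_{\infty}$ that is transversely holomorphic and $T$-invariant, hence CR-holomorphic in the limiting sense with vanishing $T$-derivative, of linear growth, normalized by $f_{\infty}(p_{\infty})=0$, $\sup_{B(p_{\infty},1)}|f_{\infty}|\le C$ and $\sup_{B(p_{\infty},\delta)}|f_{\infty}|\ge c\,\delta$ for small $\delta>0$. Heuristically $f_{\infty}$ is a degree-one coordinate on the Euclidean-type factor of the cone; its existence uses that a nontrivial metric cone equipped with a Kähler-cone structure admits degree-one holomorphic (pluriharmonic) functions on its regular part, which extend across the singular set by Cheeger--Colding regularity.

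For stage (iii), using the Gromov--Hausdorff approximations we pull $f_{\infty}$ back to an almost-CR-holomorphic function $\widetilde{f}_{i}$ on $B_{cc}(p,r_{i})\subset M$, with $\|\overline{\partial}_{b}\widetilde{f}_{i}\|_{L^{2}}$ as small as desired and with $\widetilde{f}_{i}$ nondegenerate at scales $\delta r_{i}$ and $r_{i}$ and $\widetilde{f}_{i}(p)=0$. The CR Hörmander $L^{2}$-estimate for $\overline{\partial}_{b}$---available because nonnegative pseudohermitian bisectional curvature, via a sub-Laplacian Hessian comparison together with the CR heat-flow moment estimates of \cite{cchl}, furnishes a strictly plurisubharmonic weight of the form $(n+1+\varepsilon)\log(1+d_{cc}^{2})$---lets us solve $\overline{\partial}_{b}u_{i}=\overline{\partial}_{b}\widetilde{f}_{i}$ with $\|u_{i}\|_{L^{2}}$ dominated by $\|\overline{\partial}_{b}\widetilde{f}_{i}\|_{L^{2}}$; smallness of the correction preserves nondegeneracy, so $f_{i}:=\widetilde{f}_{i}-u_{i}$ is genuinely CR-holomorphic on $B_{cc}(p,r_{i})$, nonconstant, with a linear-growth bound at scale $r_{i}$. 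Finally the CR three-circle theorem of \cite{chl1}, \cite{chl2} shows that $\log\sup_{B_{cc}(p,r)}|f_{i}|$ is convex in $\log r$, so the scale-$r_{i}$ bound propagates to a polynomial- (indeed linear-) growth bound on all of $B_{cc}(p,r_{i})$ with degree independent of $i$; normalizing the $f_{i}$ and extracting a convergent subsequence produces a globally defined nonconstant element of $\mathcal{O}_{d}^{CR}(M)$ for some finite $d$, which also recovers the quantitative content of Proposition \ref{P1}.

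I expect the crux to be stage (iii): securing the CR Hörmander $L^{2}$-estimate with error terms controlled uniformly in the scale $r_{i}$, and simultaneously guaranteeing that the $L^{2}$-correction $u_{i}$ is negligible next to $\widetilde{f}_{i}$ so that $f_{i}$ remains nonconstant. This rests on a robust sub-Riemannian Hessian/sub-Laplacian comparison for the weight under merely nonnegative (not strictly positive) pseudohermitian bisectional curvature, and on pairing the CR heat-flow moment estimates with the three-circle theorem to convert the scale-dependent local bounds into an honest polynomial-growth statement. The Cheeger--Colding analysis of stage (i) in the sub-Riemannian setting is technically heavy but, once the volume comparison is in place, runs parallel to the Riemannian theory.
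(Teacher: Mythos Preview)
Your three-stage outline matches the paper's in spirit, but there are two genuine gaps that the paper resolves by quite different means.

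First, in stage (i) you propose to run Cheeger--Colding theory directly on the rescaled Carnot--Carath\'eodory spaces $(M,r_i^{-1}d_{cc},p)$. That theory is not available in the sub-Riemannian category: almost-rigidity and cone-splitting require a Riemannian Ricci lower bound. The paper instead passes to the family of \emph{Riemannian} Webster metrics $g_{\delta_i}=\tfrac12\, d\theta+\delta_i^{2}\,\theta\otimes\theta$, for which $\mathrm{Ric}\ge -2\delta_i^{2}$. The price is that CR maximal volume growth $Vol(B_{cc}(p,r))\sim r^{2n+2}$ translates, on the $(2n+1)$-dimensional Riemannian manifold $(M,g_{\delta_i})$, only into the \emph{degenerate} condition $\mathcal V_R(p_i)\ge c\,\delta_i R$ (see (\ref{2020BBB})), which collapses as $\delta_i\to 0$. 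Standard Cheeger--Colding does not give a cone at infinity here either; the paper develops a bespoke double blow-down (Lemma~\ref{l:tcone.inf} and Theorem~\ref{t51}) to extract radii $r_i\to\infty$ and points $y_i,z_i$ with $B_{r_i/\epsilon}(y_i)$, $B_{r_i/\epsilon}(z_i)$ each $\epsilon r_i$-GH-close to a Euclidean ball in $\mathbb R^{2n+1}$. No limiting Sasakian-cone structure is ever built or used; everything happens on the approximating manifolds near these almost-Euclidean regions.

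Second, your weight $(n+1+\varepsilon)\log(1+d_{cc}^{2})$ is not known to be plurisubharmonic under merely nonnegative pseudohermitian bisectional curvature, and $d_{cc}^{2}$ is not smooth. The paper builds the weight by taking a smooth basic approximate $r^{2}$-function (assembled from the Cheeger--Colding harmonic replacement), cutting it off, and evolving it by the CR heat flow for unit time; strict positivity of its complex Hessian on a large ball is then proved via the maximum principle for the CR Lichnerowicz--Laplacian (Theorem~\ref{t1}). With this weight in hand the paper does \emph{not} pull back a function from the cone. Instead it uses the local CR-holomorphic charts of Theorem~\ref{t2} around $y_i$ and $z_i$ to add log-pole terms at those two points, takes $f_i$ a cutoff equal to $1$ near $y_i$ and $0$ near $z_i$, and solves $\overline\partial_B h_i=\overline\partial_B f_i$ by Proposition~\ref{P21}; the non-integrability of $e^{-\psi_i}$ at $y_i,z_i$ forces $h_i(y_i)=h_i(z_i)=0$, so $\mu_i=f_i-h_i$ satisfies $\mu_i(y_i)=1$, $\mu_i(z_i)=0$ and is automatically nonconstant with a uniform bound on $B(p_i,2)$. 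Your stage (iii) as written does not secure nonconstancy: bounding $\|u_i\|_{L^2}$ by $\|\overline\partial_b\widetilde f_i\|_{L^2}$ does not prevent $u_i$ from cancelling the nondegeneracy of $\widetilde f_i$. The log-singular-weight device, together with the two separated almost-Euclidean centers supplied by Theorem~\ref{t51}, is exactly what pins the values and makes the three-circle argument go through.
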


\begin{remark}
1. Note that it is due to M. Gromov (\cite{g}) that the power of the
Carnot-Carath\'{e}odory distance $r_{cc}\left(  x\right)  $ in (\ref{2020AAA})
is $2n+2$ which is crucial so that we have the CR analogue of tangent cone
property as in (\ref{2020BBB}) and Theorem \ref{t51}.

2. It is also known that any positive pseudoharmonic function in a complete
noncompact Sasakian $(2n+1)$-manifold of nonnegative pseudohermitian Ricci
curvature tensors must be constant (\cite{cklt}).
\end{remark}

Our methods and the rest of the paper is organized as follows. In section $2$,
we introduce some basic materials in a pseudohermitian $(2n+1)$-manifold and
state the CR analogue of H\"{o}rmander L$^{2}$-estimate, maximum principle for
the CR heat equation, and the CR three-circle theorem (\cite{chl2}). we will
give the detail proofs in the appendices. In section $3,$ we develope the
Cheeger-Colding theory and the metric cone structure at infinity (Theorem
\ref{t51}). In section $4$, we obtain a result which controls the size of a CR
holomorpic chart (Theorem \ref{t2}) when the manifold is Gromov-Hausdorff
close to an Euclidean ball with\ respect to the Webster (adapted) metric.
Finally in section $5$, based on the Cheeger-Colding theory, CR heat flow
technique and CR Hormander $L^{2}$-estimate, we construct CR-holomorphic
functions with controlled growth in a sequence of exhaustion domains on $M$ by
the tangent cone at infinity. On the other hand, the CR three circle Theorem
\ref{TA} ensures that we can take subsequence to obtain a nonconstant CR
holomorphic function of polynomial growth. In appendices, we derive the CR
analogue of H\"{o}rmander L$^{2}$-estimate (Proposition \ref{P21}) for
$\overline{\partial}_{B}$ in a Sasakian manifold. We also justify the weighted
basic function property (\ref{2020}) by deriving the maximum principle of CR
heat equation (Theorem \ref{t1}).

\section{Preliminaries}

We first introduce some basic materials in a pseudohermitian $(2n+1)$-manifold
(see \cite{l}). Let $(M,\xi)$ be a $(2n+1)$-dimensional, orientable, contact
manifold with contact structure $\xi$. A CR structure compatible with $\xi$ is
an endomorphism $J:\xi\rightarrow\xi$ such that $J^{2}=-1$. We also assume
that $J$ satisfies the following integrability condition: If $X$ and $Y$ are
in $\xi$, then so are $[JX,Y]+[X,JY]$ and $J([JX,Y]+[X,JY])=[JX,JY]-[X,Y]$.
Let $\left\{  \mathbf{T},Z_{\alpha},Z_{\bar{\alpha}}\right\}  $ be a frame of
$TM\otimes\mathbb{C}$, where $Z_{\alpha}$ is any local frame of $T^{1,0}%
(M),\ Z_{\bar{\alpha}}=\overline{Z_{\alpha}}\in T^{0,1}(M)$ and $\mathbf{T}$
is the characteristic vector field. Then $\left\{  \theta,\theta^{\alpha
},\theta^{\bar{\alpha}}\right\}  $, which is the coframe dual to $\left\{
\mathbf{T},Z_{\alpha},Z_{\bar{\alpha}}\right\}  $, satisfies
\begin{equation}
d\theta=ih_{\alpha\overline{\beta}}\theta^{\alpha}\wedge\theta^{\overline
{\beta}} \label{72}%
\end{equation}
for some positive definite hermitian matrix of functions $(h_{\alpha\bar
{\beta}})$. \ If we have this contact structure, we also call such $M$ a
\textbf{strictly pseudoconvex} CR $(2n+1)$-manifold. The Levi form
$\left\langle \ ,\ \right\rangle _{L_{\theta}}$ is the Hermitian form on
$T^{1,0}(M)$ defined by%
\[
\left\langle Z,W\right\rangle _{L_{\theta}}=-i\left\langle d\theta
,Z\wedge\overline{W}\right\rangle .
\]
We can extend $\left\langle \ ,\ \right\rangle _{L_{\theta}}$ to $T^{1,0}(M)$
by defining $\left\langle \overline{Z},\overline{W}\right\rangle _{L_{\theta}%
}=\overline{\left\langle Z,W\right\rangle }_{L_{\theta}}$ for all $Z,W\in
T^{1,0}(M)$. The Levi form induces naturally a Hermitian form on the dual
bundle of $T^{1,0}(M)$, denoted by $\left\langle \ ,\ \right\rangle
_{L_{\theta}^{\ast}}$, and hence on all the induced tensor bundles.
Integrating the Hermitian form (when acting on sections) over $M$ with respect
to the volume form
\[
d\mu=\theta\wedge(d\theta)^{n},
\]
we get an inner product on the space of sections of each tensor bundle.

The pseudohermitian connection of $(J,\theta)$ is the connection $\nabla$ on
$TM\otimes\mathbb{C}$ (and extended to tensors) given in terms of a local
frame $Z_{\alpha}\in T^{1,0}(M)$ by%

\[
\nabla Z_{\alpha}=\omega_{\alpha}{}^{\beta}\otimes Z_{\beta},\quad\nabla
Z_{\bar{\alpha}}=\omega_{\bar{\alpha}}{}^{\bar{\beta}}\otimes Z_{\bar{\beta}%
},\quad\nabla\mathbf{T}=0,
\]
where $\omega_{\alpha}{}^{\beta}$ are the $1$-forms uniquely determined by the
following equations:%

\[%
\begin{split}
d\theta^{\beta}  &  =\theta^{\alpha}\wedge\omega_{\alpha}{}^{\beta}%
+\theta\wedge\tau^{\beta},\\
0  &  =\tau_{\alpha}\wedge\theta^{\alpha},\\
0  &  =\omega_{\alpha}{}^{\beta}+\omega_{\bar{\beta}}{}^{\bar{\alpha}},
\end{split}
\]
We can write (by Cartan lemma) $\tau_{\alpha}=A_{\alpha\gamma}\theta^{\gamma}$
with $A_{\alpha\gamma}=A_{\gamma\alpha}$. The curvature of Tanaka-Webster
connection, expressed in terms of the coframe $\{ \theta=\theta^{0}%
,\theta^{\alpha},\theta^{\bar{\alpha}}\}$, is
\[%
\begin{split}
\Pi_{\beta}{}^{\alpha}  &  =\overline{\Pi_{\bar{\beta}}{}^{\bar{\alpha}}%
}=d\omega_{\beta}{}^{\alpha}-\omega_{\beta}{}^{\gamma}\wedge\omega_{\gamma}%
{}^{\alpha},\\
\Pi_{0}{}^{\alpha}  &  =\Pi_{\alpha}{}^{0}=\Pi_{0}{}^{\bar{\beta}}=\Pi
_{\bar{\beta}}{}^{0}=\Pi_{0}{}^{0}=0.
\end{split}
\]
Webster showed that $\Pi_{\beta}{}^{\alpha}$ can be written
\[
\Pi_{\beta}{}^{\alpha}=R_{\beta}{}^{\alpha}{}_{\rho\bar{\sigma}}\theta^{\rho
}\wedge\theta^{\bar{\sigma}}+W_{\beta}{}^{\alpha}{}_{\rho}\theta^{\rho}%
\wedge\theta-W^{\alpha}{}_{\beta\bar{\rho}}\theta^{\bar{\rho}}\wedge
\theta+i\theta_{\beta}\wedge\tau^{\alpha}-i\tau_{\beta}\wedge\theta^{\alpha}%
\]
where the coefficients satisfy
\[
R_{\beta\bar{\alpha}\rho\bar{\sigma}}=\overline{R_{\alpha\bar{\beta}\sigma
\bar{\rho}}}=R_{\bar{\alpha}\beta\bar{\sigma}\rho}=R_{\rho\bar{\alpha}%
\beta\bar{\sigma}},\ \ \ W_{\beta\bar{\alpha}\gamma}=W_{\gamma\bar{\alpha
}\beta}.
\]
Here $R_{\gamma}{}^{\delta}{}_{\alpha\bar{\beta}}$ is the pseudohermitian
curvature tensor, $R_{\alpha\bar{\beta}}=R_{\gamma}{}^{\gamma}{}_{\alpha
\bar{\beta}}$ is the pseudohermitian Ricci curvature tensor and $A_{\alpha
\beta}$\ is the pseudohermitian torsion. Furthermore, we define the
pseudohermitian bisectional curvature tensor
\[
R_{\alpha\bar{\alpha}\beta\overline{\beta}}(X,Y):=R_{\alpha\bar{\alpha}%
\beta\overline{\beta}}X_{\alpha}X_{\overline{\alpha}}Y_{\beta}Y_{\bar{\beta}}%
\]
and the pseudohermitian torsion tensor \
\[
Tor(X,Y):=i(A_{\overline{\alpha}\bar{\rho}}X^{\overline{\rho}}Y^{\overline
{\alpha}}-A_{\alpha\rho}X^{\rho}Y^{\alpha})
\]
for any $X=X^{\alpha}Z_{\alpha},\ Y=Y^{\alpha}Z_{\alpha}$ in $T^{1,0}(M).$

We will denote components of covariant derivatives with indices preceded by
comma; thus write $A_{\alpha\beta,\gamma}$. The indices $\{0,\alpha
,\bar{\alpha}\}$ indicate derivatives with respect to $\{T,Z_{\alpha}%
,Z_{\bar{\alpha}}\}$. For derivatives of a scalar function, we will often omit
the comma, for instance, $u_{\alpha}=Z_{\alpha}u,\ u_{\alpha\bar{\beta}%
}=Z_{\bar{\beta}}Z_{\alpha}u-\omega_{\alpha}{}^{\gamma}(Z_{\bar{\beta}%
})Z_{\gamma}u.$ For a smooth real-valued function $u$, the subgradient
$\nabla_{b}$ is defined by $\nabla_{b}u\in\xi$ and $\left\langle Z,\nabla
_{b}u\right\rangle _{L_{\theta}}=du(Z)$ for all vector fields $Z$ tangent to
contact plane. Locally $\nabla_{b}u=\sum_{\alpha}u_{\bar{\alpha}}Z_{\alpha
}+u_{\alpha}Z_{\bar{\alpha}}$. We also denote $u_{0}=\boldsymbol{T}u$. We can
use the connection to define the subhessian as the complex linear map
$(\nabla^{H})^{2}u:T^{1,0}(M)\oplus T^{0,1}(M)\rightarrow T^{1,0}(M)\oplus
T^{0,1}(M)$ by
\[
(\nabla^{H})^{2}u(Z)=\nabla_{Z}\nabla_{b}u.
\]
In particular%

\[%
\begin{array}
[c]{c}%
|\nabla_{b}u|^{2}=2\sum_{\alpha}u_{\alpha}u_{\overline{\alpha}},\quad
|\nabla_{b}^{2}u|^{2}=2\sum_{\alpha,\beta}(u_{\alpha\beta}u_{\overline{\alpha
}\overline{\beta}}+u_{\alpha\overline{\beta}}u_{\overline{\alpha}\beta}).
\end{array}
\]
Also
\[%
\begin{array}
[c]{c}%
\Delta_{b}u=Tr\left(  (\nabla^{H})^{2}u\right)  =\sum_{\alpha}(u_{\alpha
\bar{\alpha}}+u_{\bar{\alpha}\alpha}).
\end{array}
\]

Finally, we state the CR analogue of H\"{o}rmander L$^{2}$-estimate, maximum
principle for the CR heat equation, and the CR three-circle theorem
(\cite{chl2}) which are key results for the proof of main results in this
paper. For completeness, w\textbf{e will give the detail proofs in the
appendices}.

\begin{proposition}
\label{P21} (\textbf{CR H\"{o}rmander L}$^{2}$\textbf{-estimate}) Let
$(M,T^{1,0}(M),\theta)$ be a connected but not necessarily complete Sasakian
manifold of nonnegative pseudohermitian Ricci tensor. Assume that $M$ is Stein
in the sense that, a smooth real basic function $\psi$ on $M$ \ with
$i\partial_{B}\overline{\partial}_{B}\varphi>0$ such that the subsets $\{x\in
M:$ $\psi(x)\leq c\}$ are compact in $M$ for every real number $c$. Let
$\varphi$ be a smooth weighted basic function on $M$ with%
\begin{equation}
i\partial_{B}\overline{\partial}_{B}\varphi\geq cd\theta\label{2020}%
\end{equation}
for some positive function $c$ on $M$. Let $g$ be a smooth basic $(0,1)$-form
satisfying
\[
\overline{\partial}_{B}g=0
\]
with
\[
\int_{M}\frac{|g|^{2}}{c}e^{-\varphi}d\mu<\infty.
\]
Then there exists a smooth basic function $f$ on $M$ with
\[
\overline{\partial}_{B}f=g
\]
with
\[
\int_{M}|f|^{2}e^{-\varphi}d\mu\leq\int_{M}\frac{|g|^{2}}{c}e^{-\varphi}d\mu.
\]

\end{proposition}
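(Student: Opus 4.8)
The plan is to run the classical Hörmander--Andreotti--Vesentini $L^{2}$-existence scheme, but carried out in the \emph{basic} (transverse) category, where the Sasakian hypothesis (vanishing pseudohermitian torsion $A_{\alpha\beta}=0$) makes the $\overline{\partial}_{B}$-complex behave like the Dolbeault complex of the locally defined Kähler leaf space of the Reeb foliation. Fix $\varphi$ and form the Hilbert space $H_{0}=L_{B}^{2}(M,e^{-\varphi}d\mu)$ of basic functions, the Hilbert space $H_{1}$ of basic $(0,1)$-forms with norm $\int_{M}|\alpha|^{2}e^{-\varphi}d\mu$, and the auxiliary Hilbert space $\widetilde{H}_{1}$ of basic $(0,1)$-forms with norm $\int_{M}|\alpha|^{2}c^{-1}e^{-\varphi}d\mu$. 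On these, $\overline{\partial}_{B}$ is densely defined and closed, with Hilbert-space adjoint $\overline{\partial}_{B}^{*}$, and the hypothesis $\overline{\partial}_{B}g=0$ puts $g$ in the kernel of $\overline{\partial}_{B}\colon H_{1}\to H_{2}$. By the Riesz representation theorem, to produce $f$ with $\overline{\partial}_{B}f=g$ and $\Vert f\Vert_{H_{0}}^{2}\le\int_{M}|g|^{2}c^{-1}e^{-\varphi}d\mu$ it is enough to prove the a priori inequality
\[
|\langle\alpha,g\rangle_{H_{1}}|^{2}\le\Big(\int_{M}\frac{|g|^{2}}{c}e^{-\varphi}d\mu\Big)\,\Vert\overline{\partial}_{B}^{*}\alpha\Vert_{H_{0}}^{2}
\]
for every $\alpha\in\mathrm{Dom}(\overline{\partial}_{B}^{*})$; the functional $\overline{\partial}_{B}^{*}\alpha\mapsto\langle\alpha,g\rangle$ is then well defined and bounded, Hahn--Banach extends it to $H_{0}$, and Riesz supplies the representing element $f$.

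To get that inequality I would split $\alpha=\alpha'+\alpha''$ orthogonally with $\alpha'\in\ker\overline{\partial}_{B}$ and $\alpha''\in(\ker\overline{\partial}_{B})^{\perp}$. Since $g\in\ker\overline{\partial}_{B}$ we have $\langle\alpha,g\rangle=\langle\alpha',g\rangle$, and since $(\ker\overline{\partial}_{B})^{\perp}\subseteq\ker\overline{\partial}_{B}^{*}$ we have $\overline{\partial}_{B}^{*}\alpha=\overline{\partial}_{B}^{*}\alpha'$ and $\overline{\partial}_{B}\alpha'=0$. Cauchy--Schwarz (pairing $c^{1/2}\alpha'$ with $c^{-1/2}g$) then reduces everything to the Morrey--Kohn--Hörmander estimate
\[
\int_{M}c\,|\alpha'|^{2}e^{-\varphi}d\mu\le\Vert\overline{\partial}_{B}\alpha'\Vert_{\widetilde{H}_{1}}^{2}+\Vert\overline{\partial}_{B}^{*}\alpha'\Vert_{H_{0}}^{2}=\Vert\overline{\partial}_{B}^{*}\alpha'\Vert_{H_{0}}^{2}.
\]
This is the only place the geometry enters: one writes the $e^{-\varphi}$-weighted basic Bochner--Kodaira--Nakano identity for $\square_{B}=\overline{\partial}_{B}\overline{\partial}_{B}^{*}+\overline{\partial}_{B}^{*}\overline{\partial}_{B}$ on smooth compactly supported basic $(0,1)$-forms, which on a Sasakian manifold reads, schematically,
\[
\Vert\overline{\partial}_{B}\alpha\Vert^{2}+\Vert\overline{\partial}_{B}^{*}\alpha\Vert^{2}=\Vert\nabla^{0,1}\alpha\Vert^{2}+\int_{M}\big(R_{\gamma\bar{\delta}}+\varphi_{\gamma\bar{\delta}}\big)\,\alpha^{\bar{\gamma}}\,\overline{\alpha^{\bar{\delta}}}\;e^{-\varphi}d\mu,
\]
where the vanishing of the torsion is exactly what removes the extra terms involving $A_{\alpha\beta}$ and its derivatives that would otherwise spoil positivity, and where the $\mathbf{T}$-derivative contributions drop out because $\alpha$ is basic. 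Discarding the nonnegative Dirichlet term $\Vert\nabla^{0,1}\alpha\Vert^{2}$, using $R_{\gamma\bar{\delta}}\ge0$, and inserting the hypothesis $i\partial_{B}\overline{\partial}_{B}\varphi\ge c\,d\theta$, i.e. $\varphi_{\gamma\bar{\delta}}\ge c\,h_{\gamma\bar{\delta}}$, yields $\Vert\overline{\partial}_{B}\alpha\Vert^{2}+\Vert\overline{\partial}_{B}^{*}\alpha\Vert^{2}\ge\int_{M}c\,|\alpha|^{2}e^{-\varphi}d\mu$, which specialized to $\alpha'$ is the required estimate.

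The genuine obstacle is that $M$ is only assumed Stein, not complete: without completeness, neither the integrations by parts underlying the Bochner--Kodaira identity nor the density of $C_{c}^{\infty}$ basic forms in $\mathrm{Dom}(\overline{\partial}_{B})\cap\mathrm{Dom}(\overline{\partial}_{B}^{*})$ is automatic, so the a priori estimate cannot simply be quoted on all of $M$. I would handle this following Hörmander's original argument (cf. \cite{de}): exhaust $M$ by the sublevel sets $M_{c}=\{\psi<c\}$ of the given strictly basic-plurisubharmonic exhaustion $\psi$, replace $\varphi$ by $\varphi+\chi\circ\psi$ for a sufficiently fast-growing convex $\chi$ so that the extra positive term $i\partial_{B}\overline{\partial}_{B}(\chi\circ\psi)$ absorbs the commutator errors coming from cutting off near $\partial M_{c}$, solve $\overline{\partial}_{B}f_{c}=g$ on each $M_{c}$ with the uniform bound above, and extract a weak $L^{2}$ limit $f$ as $c\to\infty$; then $\overline{\partial}_{B}f=g$ distributionally, and $f$ is smooth by the (sub)elliptic regularity of $\overline{\partial}_{B}$ on basic functions. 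Throughout one must check that every object stays basic --- that $\overline{\partial}_{B}$, its weighted adjoint, the exhaustion $\psi$, and the cutoffs are all $\mathbf{T}$-invariant, which again rests on $\mathbf{T}$ being a Killing field for the Sasakian structure. This bookkeeping, rather than any single estimate, is the part that needs care, and the detailed argument is deferred to the appendix.
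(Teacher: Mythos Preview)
Your proposal is correct and follows essentially the same approach as the paper: both establish a CR (basic) Bochner--Kodaira--Nakano identity in the transverse K\"ahler framework of the Sasakian manifold, and then invoke the classical H\"ormander/Demailly $L^{2}$-scheme on the Stein exhaustion. If anything, you supply more of the functional-analytic detail (the Riesz/Hahn--Banach setup, the splitting $\alpha=\alpha'+\alpha''$, the exhaustion-and-weight-modification argument) than the paper itself, which simply proves the identity $\Delta_{\overline{\partial}_{B}}=\Delta'+[i\Theta(E),\Lambda]$ via transverse K\"ahler commutation relations and then declares the rest ``straightforward from \cite{h} and \cite{de}.''
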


By modifying the methods of \cite{nt1}, \cite{liu2} and \cite{cchl}, we should
justify the property (\ref{2020}) via the CR heat equation estimate (Theorem
\ref{t1}). Then we are able to apply the following Theorem \ref{t1} to
construct the weighted basic function as in section $4$ and section $5$.

\begin{theorem}
\label{t1} Let $\left(  M,J,\theta\right)  $ be a complete noncompact Sasakian
$\left(  2n+1\right)  $-manifold of nonnegative pseudohermitian bisectional
curvature. We consider, for $u\in C_{c}^{\infty}\left(  M\right)  $ with
$u_{0}=0,$
\[
v\left(  x,t\right)  =%
%TCIMACRO{\dint \limits_{M}}%
%BeginExpansion
{\displaystyle\int\limits_{M}}
%EndExpansion
H\left(  x,y,t\right)  u\left(  y\right)  d\mu\left(  y\right)
\]
on $M\times\left[  0,+\infty\right)  $. Let
\[
\eta_{\alpha\overline{\beta}}\left(  x,t\right)  =v_{\alpha\overline{\beta}%
}\left(  x,t\right)
\]
and $\lambda\left(  x\right)  $ be the bottom spectrum of $\eta_{\alpha
\overline{\beta}}\left(  x,0\right)  $ with%
\[
\lambda\left(  x,t\right)  =%
%TCIMACRO{\dint \limits_{M}}%
%BeginExpansion
{\displaystyle\int\limits_{M}}
%EndExpansion
H\left(  x,y,t\right)  \lambda\left(  y\right)  d\mu\left(  y\right)  .
\]
Then $\eta_{\alpha\overline{\beta}}$ is the basic $(1,1)$-tensor and
\[
\eta_{\alpha\overline{\beta}}\left(  x,t\right)  -\lambda\left(  x,t\right)
h_{\alpha\overline{\beta}}%
\]
is a nonnegative $\left(  1,1\right)  $-tensor on $M\times\left[  0,T\right]
$ for any $T>0.$
\end{theorem}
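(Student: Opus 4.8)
The plan is to follow the classical heat-flow argument of Ni--Tam, adapted to the CR setting via the preliminaries already assembled. First I would verify that $\eta_{\alpha\overline{\beta}}(x,t) = v_{\alpha\overline{\beta}}(x,t)$ is a \emph{basic} $(1,1)$-tensor: since $u\in C_c^\infty(M)$ with $u_0 = Tu = 0$ and the heat kernel $H(x,y,t)$ on a Sasakian manifold commutes with $T$ (the subLaplacian $\Delta_b$ being $T$-invariant), one gets $(Tv)(x,t) = 0$ for all $t$; then the Sasakian structure equations force $v_{\alpha\overline{\beta}} = v_{\overline{\beta}\alpha}$ up to the torsion terms, which vanish because $A_{\alpha\beta} = 0$, and the commutation relations with $T$ give $(\eta_{\alpha\overline{\beta}})_0 = 0$. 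The key point is that in a Sasakian manifold the CR Lichnerowicz heat operator on basic $(1,1)$-tensors behaves exactly like the Kähler Lichnerowicz heat operator, so the evolution equation reads, schematically,
\[
\left(\Delta_b - \frac{\partial}{\partial t}\right)\eta_{\alpha\overline{\beta}} = R_{\alpha\overline{\beta}\gamma\overline{\delta}}\,\eta^{\overline{\delta}\gamma} - \frac{1}{2}\bigl(R_{\alpha\overline{\gamma}}\,\eta^{\overline{\gamma}}{}_{\overline{\beta}} + R_{\overline{\beta}\gamma}\,\eta_{\alpha}{}^{\gamma}\bigr),
\]
with no torsion-generated inhomogeneity because of the Sasakian assumption.

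Next I would set up the tensor maximum principle. Let $\mu(x,t)$ denote the lowest eigenvalue of $\eta_{\alpha\overline{\beta}}(x,t)$ with respect to $h_{\alpha\overline{\beta}}$, and consider $P_{\alpha\overline{\beta}} := \eta_{\alpha\overline{\beta}} - \lambda(x,t)\,h_{\alpha\overline{\beta}}$, where $\lambda(x,t)$ is the heat evolution of the initial lowest eigenvalue $\lambda(x) = \mu(x,0)$. At $t=0$ we have $P_{\alpha\overline{\beta}}(x,0) = \eta_{\alpha\overline{\beta}}(x,0) - \lambda(x)h_{\alpha\overline{\beta}} \geq 0$ by definition of $\lambda(x)$ as the bottom of the spectrum. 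Since $\lambda(x,t)$ solves the \emph{scalar} heat equation $(\Delta_b - \partial_t)\lambda = 0$ and $h_{\alpha\overline{\beta}}$ is parallel, $P_{\alpha\overline{\beta}}$ satisfies the same Lichnerowicz-type evolution as $\eta_{\alpha\overline{\beta}}$ (the Ricci terms acting on $\lambda h_{\alpha\overline{\beta}}$ contribute $-\lambda R_{\alpha\overline{\beta}} + \lambda R_{\alpha\overline{\beta}} = 0$ after symmetrization, using $h_{\alpha\overline{\beta}}$ parallel and the contraction identities). The reaction term $R_{\alpha\overline{\beta}\gamma\overline{\delta}} P^{\overline{\delta}\gamma} - \tfrac12(\cdots)$ is of the form that, under nonnegative pseudohermitian bisectional curvature, preserves nonnegativity of $P$: at a first null eigenvector $v^{\alpha}$ of $P$, the curvature term $R_{\alpha\overline{\beta}\gamma\overline{\delta}} v^{\overline{\beta}} v^{\alpha} P^{\overline{\delta}\gamma} \geq 0$ because $P \geq 0$ and the bisectional curvature is nonnegative, while the Ricci terms annihilate $v$ in the null direction.

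Then I would invoke the maximum principle for the CR heat equation on a complete noncompact manifold — this is precisely the content of Theorem~\ref{t1} in its stated form, but to \emph{prove} it one needs a noncompact tensor maximum principle, which requires either a cutoff/barrier argument controlling growth at infinity or Ni--Tam's exhaustion technique using that $u$ has compact support so $v$ and its derivatives decay. The main obstacle is exactly this: justifying the maximum principle without a uniform bound hypothesis, i.e. showing $P_{\alpha\overline{\beta}}$ stays nonnegative on all of $M\times[0,T]$ rather than just on compact subsets. I would handle it by the standard device of perturbing to $P_{\alpha\overline{\beta}} + \varepsilon(1+t)h_{\alpha\overline{\beta}}$, using a Carnot--Carathéodory distance cutoff together with a Li--Yau-type gradient/Laplacian bound for $v$ coming from $u\in C_c^\infty$, applying the compact-domain tensor maximum principle to the perturbed quantity, and then letting $\varepsilon \to 0$. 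The Sasakian (zero torsion) hypothesis is what makes the whole scheme run, since it removes every torsion term that would otherwise break both the basic-tensor property and the sign of the reaction term; without it one would be in the genuinely pseudohermitian case where the Lichnerowicz heat flow does not preserve nonnegativity.
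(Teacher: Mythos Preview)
Your approach is essentially the same as the paper's: the CR Lichnerowicz evolution for $\eta_{\alpha\overline{\beta}}$, the null-eigenvector tensor maximum principle under nonnegative bisectional curvature, and a perturbed exhaustion argument for the noncompact case. The one place where your sketch diverges from the paper and is slightly off is the boundary control. The paper perturbs by $\epsilon\,\phi(x,t)\,h_{\alpha\overline{\beta}}$ with $\phi(x,t)=e^{t}\int_{M}H(x,y,t)e^{r(y)}\,d\mu(y)$, which \emph{grows} at spatial infinity, and secures positivity on $\partial B_{o}(R)\times[0,T]$ through a chain of lemmas showing that $\|\eta_{\alpha\overline{\beta}}\|$ is a CR-heat subsolution dominated by the heat evolution of its (compactly supported) initial norm, which then tends to zero on far annuli by Gaussian heat-kernel estimates. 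That decay step is not a ``Li--Yau-type gradient/Laplacian bound'' --- Li--Yau gives interior Harnack/gradient information, not spatial decay --- so your boundary argument should invoke the subsolution-plus-heat-kernel-decay mechanism instead; once that is in place, your bounded perturbation $\varepsilon(1+t)h_{\alpha\overline{\beta}}$ works just as well as the paper's growing one.
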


Furthermore, we also need the CR three circle theorem (\cite[Theorem
1.1.]{chl2}) to ensure that we can take subsequence to obtain a nonconstant CR
holomorphic function of polynomial growth.

\begin{theorem}
\label{TA} (\textbf{CR three circle theorem}) Let $\left(  M,J,\theta\right)
$ be a complete noncompact Sasakian $\left(  2n+1\right)  $-manifold of
nonnegative pseudohermitian sectional curvature. Let $M_{f}\left(  r\right)
=\underset{x\in B_{cc}\left(  p,r\right)  }{\sup}\left\vert f\left(  x\right)
\right\vert $ and $B_{cc}\left(  p,r\right)  $ be the Carnot-Carath\'{e}odory
ball centered at $p$ with radius $r$. Then the CR three-circle theorem holds
on $M$ in the sense that for any point $p\in M$, any positive number $R>0$,
and any function $f\in\mathcal{O}^{CR}(B_{cc}\left(  p,R\right)  )$ on the
ball $B_{cc}\left(  p,R\right)  $, $\log M_{f}\left(  r\right)  $ is convex
with respect to $\log r$ for $0<r<R$. Moreover, we have
\begin{equation}
\frac{M_{f}\left(  kr\right)  }{M_{f}\left(  r\right)  } \label{314}%
\end{equation}
is increasing with respect to $r$ for any positive number $k\geq1$.
\end{theorem}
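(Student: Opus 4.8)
The strategy I would follow is to transplant the argument behind the three-circle theorem for K\"ahler manifolds of nonnegative holomorphic sectional curvature (\cite{liu1}) to the present setting, using the transverse K\"ahler geometry of the Sasakian structure. On a Sasakian $(2n+1)$-manifold the Reeb field $\mathbf{T}$ generates a one-dimensional foliation; its normal bundle $\xi=\ker\theta$ carries a transverse K\"ahler metric $g^{T}$, the Tanaka-Webster connection is the pull-back of the transverse Levi-Civita connection, and nonnegativity of the pseudohermitian sectional curvature is exactly nonnegativity of the holomorphic sectional curvature of $g^{T}$. A function $f\in\mathcal{O}^{CR}(B_{cc}(p,R))$, which by definition satisfies $\overline{\partial}_{b}f=0$ and $f_{0}=\mathbf{T}f=0$, is then basic and transversally holomorphic; since the pseudohermitian torsion vanishes, the CR commutation relations give $(\log f)_{\alpha\overline{\beta}}=0$ on $\{f\neq0\}$, so that $u:=\log|f|=\mathrm{Re}\log f$ is pseudoharmonic there and Levi-plurisubharmonic throughout, and $|f|$ is constant along the leaves of $\mathbf{T}$.

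The first step is to replace the supremum over Carnot-Carath\'{e}odory balls by one over transverse geodesic balls. Set $\rho(x)=\inf\{\,d_{cc}(p,y):y\text{ lies on the }\mathbf{T}\text{-leaf through }x\,\}$, the transverse distance to $p$; since $|f|$ is leafwise constant, $M_{f}(r)=\sup_{B_{cc}(p,r)}|f|=\sup_{\{\rho\leq r\}}|f|$, and locally $\rho$ is the $g^{T}$-distance on the K\"ahler quotient $N^{n}$ to which $f$ descends as a holomorphic function $\widetilde{f}$ (a horizontal curve projects to a $g^{T}$-curve of no greater length, and a transverse minimizing geodesic lifts horizontally). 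Thus in the quasi-regular case $N$ is a K\"ahler orbifold of holomorphic sectional curvature $\geq0$, and the (orbifold version of the) argument of \cite{liu1} applied to $\widetilde{f}$ yields that $\log M_{f}(r)$ is convex with respect to $\log r$ on $(0,R)$. Since exponentiation is increasing and a function $G$ is convex iff $s\mapsto G(s+c)-G(s)$ is nondecreasing for each $c\geq0$, this is precisely the assertion that $M_{f}(kr)/M_{f}(r)$ is nondecreasing in $r$ for every fixed $k\geq1$, which is the statement of the theorem.

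When the Reeb foliation is not quasi-regular there is no global K\"ahler quotient, and one instead runs the maximum-principle argument directly on $M$, with the transverse picture serving only as a local model. Concretely, for $r_{1}<r_{2}<r_{3}<R$ pick $a\geq0$ and $b\in\mathbb{R}$ with $a\log r_{i}+b=\log M_{f}(r_{i})$ for $i=1,3$ (possible since $M_{f}$ is nondecreasing), and on the compact set $\{r_{1}\leq\rho\leq r_{3}\}$ consider $\psi=u-a\log\rho-b$, where $u$ is plurisubharmonic and $a\log\rho+b$ is a basic radial competitor. On the two boundary level sets $\psi\leq0$; at a putative positive interior maximum $x_{0}$ one has $f(x_{0})\neq0$ and $u$ pseudoharmonic near $x_{0}$, and a contradiction should follow by comparing $u$ with $a\log\rho+b$, the curvature hypothesis entering through a sub-Riemannian Hessian estimate for $\rho$ that dominates it by its value on the Heisenberg model $\mathbf{H}_{n}$.

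The main obstacle is exactly this comparison. Unlike the classical Hadamard argument on $\mathbb{C}$, for $n\geq2$ the competitor $a\log\rho+b$ is not harmonic but merely subharmonic already on $\mathbf{H}_{n}$ (there $\Delta_{b}\log\rho>0$), so a naive maximum principle fails; one is forced to exploit the \emph{maximality} of the competitor (on $\mathbf{H}_{n}$, with $\rho$ the modulus of the complex variable, $a\log\rho+b$ solves the homogeneous complex Monge-Amp\`ere equation $\bigl(i\partial_{b}\overline{\partial}_{b}\log\rho\bigr)^{n}=0$), hence a Bedford-Taylor-type comparison principle for basic plurisubharmonic potentials, together with a sharp sub-Riemannian Hessian comparison for $\rho$ under nonnegative pseudohermitian sectional curvature. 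The latter is delicate because $d_{cc}$, hence $\rho$, is only Lipschitz, the sub-Riemannian cut locus is more intricate than its Riemannian counterpart, and abnormal minimizers cannot a priori be excluded; I would handle it by approximating $\rho$ from below by smooth barriers that realize the Hessian bound up to $\varepsilon$, or by reducing to the transverse K\"ahler Hessian comparison on local quotients. Granting this comparison and the pluripotential input, the remaining ingredients (pseudoharmonicity of $\log|f|$, the K\"ahler three-circle theorem of \cite{liu1} on local quotients, and the convexity bookkeeping) are essentially formal.
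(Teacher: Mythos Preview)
The paper does not actually prove this statement. Although Section~2 announces that ``we will give the detail proofs in the appendices'' for Proposition~\ref{P21}, Theorem~\ref{t1}, and Theorem~\ref{TA}, the two appendices treat only the H\"ormander $L^{2}$-estimate and the heat-equation maximum principle; Theorem~\ref{TA} is simply quoted from \cite{chl2} (as \cite[Theorem~1.1]{chl2}), and no argument for it appears anywhere in the present paper. So there is no in-paper proof to compare your proposal against.

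That said, your proposal has a genuine gap that you yourself flag. In the quasi-regular case your reduction to the transverse K\"ahler orbifold quotient is sound: $|f|$ is basic, the leafwise-minimized Carnot--Carath\'eodory distance $\rho$ coincides with the $g^{T}$-distance on the quotient (horizontal lifts and projections preserve length), and Liu's three-circle theorem \cite{liu1} applies there; the orbifold extension is routine. But the irregular case is where the content lies, and there your argument is explicitly conditional: you write ``Granting this comparison and the pluripotential input\ldots''---yet the Hessian/Monge--Amp\`ere comparison for $\rho$ under nonnegative pseudohermitian sectional curvature \emph{is} the substance of the proof, not a preliminary to it. The approximation by smooth barriers and the reduction to local K\"ahler charts are both plausible routes, but neither is carried out, and the obstacles you list (Lipschitz regularity of $d_{cc}$, the sub-Riemannian cut locus, possible abnormal geodesics) are real and do not dissolve under a sketch. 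As it stands, your quasi-regular case is essentially complete while the general case is an outline with its decisive step assumed. To close the gap you would need either a full CR Hessian comparison for the distance (in the spirit of \cite{cklt}) combined with a Bedford--Taylor-type comparison for basic plurisubharmonic functions, or a direct transplantation of the maximum-principle argument of \cite{liu1} to the Tanaka--Webster connection on $M$, which is presumably the route taken in \cite{chl2}.
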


\section{Tangent Cone at Infinity}

In this section, we will develope the metric cone structure at infinity
(Theorem \ref{t51}) for a sequence of Riemannian manifolds $(M_{i}^{m}%
,p_{i},g_{i})$ so that
\[
\mathrm{Ric}_{M_{i}}\geq-\delta_{i}^{2}%
\]
as $\delta_{i}\rightarrow0.$ Then, in the next section, we are able to handle
this situation when $\left(  M,J,\theta\right)  $ is a complete noncompact
Sasakian $\left(  2n+1\right)  $-manifold of nonnegative pseudohermitian
\ Ricci curvature with a family of the Webster metric $g_{\delta_{i}}:=g_{i}$
as in (\ref{2020A}).

Let $\displaystyle\mathcal{V}_{r}^{\kappa}(p)=\frac{vol(B_{r}(p))}
{vol(B_{r}({S}_{k}^{n}))}$, where ${{S}}_{k}^{n}$ is the
$n$-dimensional space form of constant curvature $\kappa$. For simplicity, we
denote $\mathcal{V}_{r}^{0}(p)$ by $\mathcal{V}_{r}(p)$. We have the following
easy lemma.

\begin{lemma}
\label{l:vol.comp} Let $(M^{m},g)$ be a Riemannian manifold. Then for
$\kappa\geq0$,

(i)
\[
1\leq\frac{\mathcal{V}_{r}(p)}{\mathcal{V}_{r}^{-\kappa}(p)}\leq e^{c(m)\kappa
r}.
\]
(ii) If $\ \mathrm{Ric}_{M}\geq-(m-1)\kappa^{2}$ and\textrm{ } $R>r>0,$, then%
\[
\mathcal{V}_{R}(p)\leq e^{c(m)\kappa R}\mathcal{V}_{r}(p).
\]

\end{lemma}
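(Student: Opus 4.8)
The plan is to prove Lemma~\ref{l:vol.comp} by direct comparison using the Bishop--Gromov machinery, treating the two parts in order. For part (i), I would start from the sharp Bishop--Gromov inequality, which says that for any complete Riemannian $(M^m,g)$ with $\mathrm{Ric}_M \geq -(m-1)\kappa^2$ the ratio $\mathcal{V}_r^{-\kappa}(p)$ — i.e. $vol(B_r(p))$ divided by the volume of the $r$-ball in the model space of constant curvature $-\kappa^2$ — is monotone nonincreasing in $r$. The left inequality $1 \leq \mathcal{V}_r(p)/\mathcal{V}_r^{-\kappa}(p)$ is then immediate: the model-space volume of a ball of radius $r$ grows faster in the hyperbolic model than in the Euclidean one, so $vol(B_r(S_\kappa^n)) \geq vol(B_r(\mathbb{R}^n))$ wait — I need to be careful about which model sits in the denominator; in any case the Euclidean model ball is the smaller one among curvature $0$ and curvature $-\kappa$, which gives the inequality with the right orientation. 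For the right-hand bound $\mathcal{V}_r(p)/\mathcal{V}_r^{-\kappa}(p) \leq e^{c(m)\kappa r}$, the point is purely a computation about model spaces: one compares $\int_0^r \sinh^{m-1}(\kappa t)\,dt$ (up to the $\kappa$-normalization) with $\int_0^r t^{m-1}\,dt$, and since $\sinh(\kappa t)/(\kappa t) \leq e^{\kappa t} \leq e^{\kappa r}$ on $[0,r]$, one gets the ratio of model volumes bounded by $e^{(m-1)\kappa r} \leq e^{c(m)\kappa r}$. Feeding this back through the definitions of $\mathcal{V}_r$ and $\mathcal{V}_r^{-\kappa}$ yields (i).

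For part (ii), I would combine (i) with Bishop--Gromov monotonicity. Under $\mathrm{Ric}_M \geq -(m-1)\kappa^2$, the ratio $\mathcal{V}_r^{-\kappa}(p)$ is nonincreasing, so for $R > r > 0$ we have $\mathcal{V}_R^{-\kappa}(p) \leq \mathcal{V}_r^{-\kappa}(p)$. Then I chain the estimates:
\[
\mathcal{V}_R(p) \;\leq\; e^{c(m)\kappa R}\,\mathcal{V}_R^{-\kappa}(p) \;\leq\; e^{c(m)\kappa R}\,\mathcal{V}_r^{-\kappa}(p) \;\leq\; e^{c(m)\kappa R}\,\mathcal{V}_r(p),
\]
where the first inequality is part (i) at radius $R$, the middle is Bishop--Gromov monotonicity, and the last is the trivial side of part (i) (namely $\mathcal{V}_r^{-\kappa}(p) \leq \mathcal{V}_r(p)$) at radius $r$. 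This gives exactly the claimed bound, possibly after absorbing constants into $c(m)$.

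The only genuinely delicate point — and it is minor — is bookkeeping the $\kappa$ versus $\kappa^2$ and the $(m-1)$ normalization between the two parts of the statement: part (i) is phrased with a bare $\kappa$ in the exponent while part (ii) hypothesizes $\mathrm{Ric}_M \geq -(m-1)\kappa^2$, so when I invoke (i) inside the proof of (ii) I should apply it with the comparison curvature $\kappa$ matching the Ricci lower bound, and confirm that the dimensional constant $c(m)$ is consistent (one can always enlarge $c(m)$). I expect no real obstacle here; the lemma is, as the authors say, "easy," and the entire argument is an application of standard volume comparison plus an elementary estimate $\sinh(t) \leq t e^t$. I would keep the write-up to a few lines, citing Bishop--Gromov for the monotonicity of $\mathcal{V}_r^{-\kappa}$ and doing the model-space integral comparison explicitly.
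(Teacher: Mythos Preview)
Your proposal is correct and follows essentially the same route as the paper: part (i) is a pure model-space volume comparison (the ratio $\mathcal{V}_r(p)/\mathcal{V}_r^{-\kappa}(p)$ cancels $vol(B_r(p))$ and reduces to comparing hyperbolic and Euclidean ball volumes), and part (ii) is exactly the chain $\mathcal{V}_r(p)\geq \mathcal{V}_r^{-\kappa}(p)\geq \mathcal{V}_R^{-\kappa}(p)\geq e^{-c(m)\kappa R}\mathcal{V}_R(p)$ using Bishop--Gromov monotonicity sandwiched between the two sides of (i). The only cosmetic difference is that the paper handles (i) by first checking $\kappa=1$ and then scaling via $\mathcal{V}_r^{-\kappa}(p)=\kappa^m\mathcal{V}_{\kappa r}^{-1}(p)$, whereas you do the direct estimate $\sinh(\kappa t)\leq \kappa t\,e^{\kappa t}$ for general $\kappa$; both are equivalent, and you should drop the opening reference to Bishop--Gromov in part (i) since no curvature hypothesis is assumed there.
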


\begin{proof}
It's easy to see that (i) holds when $\kappa=1$, that is, $\displaystyle1\leq
\frac{\mathcal{V}_{r}(p)}{\mathcal{V}_{r}^{-1}(p)}\leq e^{c(m)r}$. Because
$\mathcal{V}_{r}^{-\kappa}(p)=\kappa^{m}\cdot\mathcal{V}_{\kappa r}^{-1}(p)$,
we have
\[
\frac{\mathcal{V}_{r}(p)}{\mathcal{V}_{r}^{-\kappa}(p)}=\frac{\mathcal{V}%
_{r}(p)}{\mathcal{V}_{\kappa r}(p)}\cdot\frac{\mathcal{V}_{\kappa r}%
(p)}{\kappa^{n}\mathcal{V}_{\kappa r}^{-1}(p)}=\frac{\mathcal{V}_{\kappa
r}(p)}{\mathcal{V}_{\kappa r}^{-1}(p)}.
\]
This implies (i) for all $\kappa>0$. Consequently, if $\mathrm{Ric}_{M}%
\geq-(m-1)\kappa^{2}$, by Bishop-Gromov volume comparison, we have
\[
\mathcal{V}_{r}(p)\geq\mathcal{V}_{r}^{-\kappa}(p)\geq\mathcal{V}_{R}%
^{-\kappa}(p)\geq e^{-c(m)\kappa R}\mathcal{V}_{R}(p).
\]

\end{proof}

\begin{lemma}
[Cone structure at infinity]\label{l:tcone.inf} Let $v,\rho>0$ be positive
constants, $\delta_{i}\rightarrow0$, and $(M_{i}^{m},p_{i},g_{i})$ be a
sequence of Riemannian manifolds so that
\[
\mathrm{Ric}_{M_{i}}\geq-\delta_{i}^{2}%
\]
and
\[
\mathcal{V}_{R}(p_{i})\geq v\delta_{i}R>0
\]
for all $R\geq\rho$. Passing to a subsequence, there exist subsequences
$\epsilon_{i}\rightarrow0$ and $R_{i}\rightarrow\infty$, so that the following
hold for $(B_{i}, \tilde p_{i},\tilde g_{i})=(B_{2R_{i}}(p_{i}),p_{i}%
,R_{i}^{-2}g_{i})$,

\begin{enumerate}
\renewcommand{\labelenumi}{(\roman{enumi})} \setlength{\itemsep}{1pt}

\item $\mathrm{Ric}_{B_{i}}\ge-\epsilon_{i}^{2}$ and $vol(B_{2}(\tilde
p_{i}))\ge c(m,v)>0$.

\item There exists a metric space $Z$ so that $(\bar B_{i},\tilde
p_{i})\overset{d_{GH}}{\longrightarrow}(\bar B_{2}(p^{*}),p^{*})$, where
$p^{*}\in C(Z)$ is a cone point.
\end{enumerate}
\end{lemma}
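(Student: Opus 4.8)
The plan is to blow the $M_i$ down at carefully chosen radii $R_i$, so that after rescaling by $R_i^{-2}$ the rescaled balls satisfy the hypotheses of Cheeger--Colding theory in the almost-noncollapsed, almost-nonnegatively-Ricci-curved regime, and then to invoke Gromov's precompactness theorem together with the ``volume cone implies metric cone'' theorem of Cheeger--Colding (\cite{chco1}, \cite{chco2}). Write $\kappa_i:=\delta_i/\sqrt{m-1}$, so that $\mathrm{Ric}_{M_i}\ge-(m-1)\kappa_i^2$ and, by Lemma~\ref{l:vol.comp}, the renormalized volume ratio $r\mapsto\mathcal V_r^{-\kappa_i}(p_i)$ is non-increasing with values in $(0,1]$. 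Since $\mathrm{Ric}_{M_i}\ge-\delta_i^2$ rather than $\ge0$, this ratio is only \emph{almost} monotone, so one cannot simply take $\lim_{r\to\infty}\mathcal V_r(p_i)$ as in the honest nonnegative-Ricci case; the key point is instead to pick $R_i$ at scales where it is nearly constant over a fixed ratio of radii \emph{and} where the rescaled unit ball is noncollapsed, and the hypothesis $\mathcal V_R(p_i)\ge v\delta_iR$ is precisely what makes both possible.

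\emph{Choice of $R_i$ and proof of (i).} Since $r\mapsto\mathcal V_r^{-\kappa_i}(p_i)$ is non-increasing with values in $(0,1]$, its total decrease is at most $1$. Partition $[\delta_i^{-1/2},\delta_i^{-2/3}]$ into the $N_i\sim\tfrac{1}{6}\log_2(1/\delta_i)$ dyadic windows $[\rho',2\rho']$ it contains; by the pigeonhole principle one of them, say $[R_i,2R_i]$, satisfies
\[
\mathcal V_{R_i}^{-\kappa_i}(p_i)-\mathcal V_{2R_i}^{-\kappa_i}(p_i)\ \le\ \tfrac{1}{N_i}\ \longrightarrow\ 0,
\]
and by construction $R_i\to\infty$ while $\epsilon_i:=\delta_iR_i\le\delta_i^{1/3}\to0$; passing to a subsequence we may also assume $\epsilon_i<\tfrac{1}{2c(m)}$. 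Set $(B_i,\tilde p_i,\tilde g_i)=(B_{2R_i}(p_i),p_i,R_i^{-2}g_i)$. The Ricci tensor is scale invariant, so $\mathrm{Ric}_{B_i}=\mathrm{Ric}_{g_i}\ge-\delta_i^2g_i=-\epsilon_i^2\tilde g_i$. For the volume, apply Lemma~\ref{l:vol.comp}(ii) with $r=R_i$ and $R=1/(c(m)\delta_i)$ --- legitimate since $R>R_i$ and $R\ge\rho$ for $i$ large --- together with $\mathcal V_R(p_i)\ge v\delta_iR$:
\[
\mathcal V_{R_i}(p_i)\ \ge\ e^{-1}\,\mathcal V_R(p_i)\ \ge\ e^{-1}v\delta_iR\ =\ \frac{v}{e\,c(m)} .
\]
Hence $\mathrm{vol}(B_2(\tilde p_i))\ge\mathrm{vol}(B_1(\tilde p_i))=\mathcal V_{R_i}(p_i)\cdot\mathrm{vol}(B_1(\mathbb R^m))\ge c(m,v)>0$, which is (i).

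\emph{Passage to the limit and proof of (ii).} By (i) the pointed closed balls $(\bar B_i,\tilde p_i)$ satisfy $\mathrm{Ric}\ge-\epsilon_i^2\ge-1$ and carry a uniform lower volume bound, so Gromov's precompactness theorem gives a further subsequence along which they converge, in the pointed Gromov--Hausdorff sense, to a pointed complete length space; since each $\bar B_i$ is a closed $2$-ball and the limit is noncollapsed, the limit is the closed ball $(\bar B_2(p^*),p^*)$. By the Cheeger--Colding volume convergence theorem, $\mathcal V_s(p^*)=\lim_i\mathcal V_{sR_i}(p_i)$ for every $s\in(0,2)$; the limit is a Ricci limit space with ``$\mathrm{Ric}\ge0$'', so (letting $\epsilon_i\to0$ in Lemma~\ref{l:vol.comp}) $s\mapsto\mathcal V_s(p^*)$ is non-increasing. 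On the other hand, for $r\in[R_i,2R_i]$ Lemma~\ref{l:vol.comp}(i) gives $0\le\mathcal V_r(p_i)-\mathcal V_r^{-\kappa_i}(p_i)\le e^{c(m)\epsilon_i}-1\to0$; writing $\mathcal V_{R_i}(p_i)-\mathcal V_{2R_i}(p_i)$ as the sum of $\mathcal V_{R_i}-\mathcal V_{R_i}^{-\kappa_i}$, $\mathcal V_{R_i}^{-\kappa_i}-\mathcal V_{2R_i}^{-\kappa_i}$ and $\mathcal V_{2R_i}^{-\kappa_i}-\mathcal V_{2R_i}$ and using the choice of $R_i$ one gets $\mathcal V_{R_i}(p_i)-\mathcal V_{2R_i}(p_i)\to0$, i.e. $\mathcal V_1(p^*)=\mathcal V_2(p^*)$ --- a common value that is positive by the estimate in (i). By the Cheeger--Colding volume-cone-implies-metric-cone theorem, $\bar B_2(p^*)$ is isometric to the closed ball of radius $2$ about the vertex $p^*$ of a metric cone $C(Z)$ over a compact metric space $Z$, which is (ii).

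\emph{Main obstacle.} Granting (i), the rest is the standard Cheeger--Colding package; the real difficulty is the first step. Because the curvature bound degenerates, the renormalized volume is only \emph{almost} monotone, and one must produce scales $R_i$ with three competing features at once --- $R_i\to\infty$, $\delta_iR_i\to0$, and $\mathcal V^{-\kappa_i}$ nearly constant on $[R_i,2R_i]$ --- while keeping the rescaled unit ball noncollapsed. The growth hypothesis $\mathcal V_R(p_i)\ge v\delta_iR$ is the linchpin: evaluated near $R\sim1/\delta_i$ and transported back to scale $R_i$ by the Bishop--Gromov comparison of Lemma~\ref{l:vol.comp}, it pins $\mathcal V_{R_i}(p_i)$ below by a positive constant, while the logarithmically wide band of admissible scales feeds the pigeonhole argument that yields the near-constancy.
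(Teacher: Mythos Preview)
Your approach is genuinely different from the paper's, and the overall strategy is sound, but there is a real gap in the final step.

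\textbf{The gap.} From the pigeonhole on a single dyadic window you obtain only $\mathcal V_1(p^*)=\mathcal V_2(p^*)$. In a noncollapsed Ricci limit with nonnegative lower bound, Bishop--Gromov gives that $s\mapsto\mathcal V_s(p^*)$ is nonincreasing, so you get $\mathcal V_s(p^*)\equiv\mathcal V_1(p^*)$ on $[1,2]$, but for $s<1$ you only know $\mathcal V_s(p^*)\ge\mathcal V_1(p^*)$, and this inequality can be strict. The Cheeger--Colding ``volume cone $\Rightarrow$ metric cone'' rigidity then yields only that the \emph{annulus} $A(p^*;1,2)$ is isometric to a cone annulus, not that $\bar B_2(p^*)$ is the ball about a cone vertex. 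Conclusion (ii) of the lemma, however, requires the full ball to sit in $C(Z)$ with $p^*$ the vertex.

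\textbf{An easy fix.} Strengthen the pigeonhole: since $\mathcal V^{-\kappa_i}_r(p_i)$ drops by at most $1$ over $[\delta_i^{-1/2},\delta_i^{-2/3}]$ and there are $N_i\to\infty$ dyadic windows, group them into blocks of $K_i=\lfloor\sqrt{N_i}\rfloor$ consecutive windows; some block $[S_i,2^{K_i}S_i]$ has total drop $\le K_i/N_i\to0$. Set $R_i=2^{K_i}S_i$ and rescale by $R_i^{-2}$. Then $\epsilon_i=\delta_iR_i\le\delta_i^{1/3}\to0$, the volume bound in (i) goes through verbatim, and now $\mathcal V_{2^{-K_i}}(p^*)=\mathcal V_1(p^*)$ in the limit; since $2^{-K_i}\to0$ and $\mathcal V_s(p^*)$ is monotone, $\mathcal V_s(p^*)$ is constant on all of $(0,1]$, and the full ball is a cone.

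\textbf{Comparison with the paper.} The paper avoids the volume-ratio bookkeeping altogether. It first blows down at the fixed scale $\rho_i=\delta_i^{-1}$, where the hypothesis gives $\mathcal V_{\rho_i}(p_i)\ge v$ directly (no appeal to Lemma~\ref{l:vol.comp}(ii) needed), takes a noncollapsed limit $\bar B_2(p_\infty)$ with $\mathrm{Ric}\ge-1$, and then invokes the fact that \emph{tangent cones} of noncollapsed Ricci limits are metric cones; a diagonal argument produces $R_j=\rho_{i_j}r_j$ with $r_j\to0$ chosen from the tangent-cone blow-up. Your route is more quantitative (it pins down the scale via pigeonhole on the volume ratio) and appeals to volume-cone $\Rightarrow$ metric-cone rather than to tangent cones; the paper's route is shorter but uses one extra layer of limit. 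Both are valid once the gap above is closed.
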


\begin{proof}
For simplicity, we denote $(B_{r}(p),p,R^{-1}d)$ by $R^{-1}B_{r}(p)$. Not
losing generality, assume $\delta_{i}^{-1}\geq\rho$ for all $i$. Let $\rho
_{i}=\delta_{i}^{-1}\geq\rho$ and consider $(A_{i},p_{i}^{\prime})=(\rho
_{i}^{-1}B_{2\rho_{i}}(p_{i}),p_{i})$. Then for $\rho_{i}\geq\rho$, we have

\begin{enumerate}
\renewcommand{\labelenumi}{(\arabic{enumi})} \setlength{\itemsep}{1pt}

\item $\mathrm{Ric}_{A_{i}}\ge-\delta_{i}^{2}\rho_{i}^{2}=-1$,

\item $vol(B_{2}(p_{i}^{\prime}))=\rho_{i}^{-m}\cdot vol(B_{2\rho_{i}}%
(p_{i}))=c(m)\mathcal{V}_{\rho_{i}}(p_{i})\ge c(m) v\delta_{i}\rho_{i}=c(m) v$.
\end{enumerate}

%\[%
%\begin{array}
%[c]{cl}%
%1. & \mathrm{Ric}_{A_{i}}\geq-\delta_{i}^{2}\rho_{i}^{2}=-1,\\
%2. & vol(B_{2}(p_{i}^{\prime}))=vol(\rho_{i}^{-1}B_{2\rho_{i}}(p_{i}%
%))=\rho_{i}^{-m}\cdot vol(B_{2\rho_{i}}(p_{i}))=2^{m}\mathcal{V}_{\rho_{i}%
%}(p_{i})\geq2^{m}v\delta_{i}\rho_{i}=2^{m}v.
%\end{array}
%\]

Passing to a subsequence, we have $\displaystyle\lim_{i\rightarrow\infty}%
(\bar{A}_{i},p_{i}^{\prime})=(\bar{B}_{2}(p_{\infty}),p_{\infty})$. Because
the sequence is non-collapsed convergent, the tangent cone of $\bar{B}%
_{2}(p_{\infty})$ at $p_{\infty}$ are all $m$-dimensional metric cones. In
particular, there exist $r_{j}\rightarrow0$ and a metric cone $C(Z)$, so that
for any $\epsilon>0$, there exists $N>0$ such that
\begin{equation}
d_{GH}(r_{j}^{-1}B_{2r_{j}}(p_{\infty}),B_{2}(p^{\ast}))<\epsilon\label{1a}%
\end{equation}
for all $j>N$, where $p^{\ast}\in C(Z)$ is the cone point. Now fix $j>0$ and
the corresponding $r_{j}$, because $\displaystyle(\bar{B}_{2}(p_{\infty
}),p_{\infty})=\lim_{i\rightarrow\infty}(\bar{A}_{i},p_{i}^{\prime})$, there
exists $i_{j}>0$ so that
\begin{equation}
d_{GH}(B_{2r_{j}}(p_{i}^{\prime}),B_{2r_{j}}(p_{\infty}))<r_{j}\epsilon
\label{1b}%
\end{equation}
for all $i\geq i_{j}$. Moreover, we can choose $i_{j}$ large enough such that
$\rho_{i_{j}}>r_{j}^{-2}$. Combining (\ref{1a}) and (\ref{1b}), we get
\[
d_{GH}(r_{j}^{-1}B_{2r_{j}}(p_{i_{j}}^{\prime}),B_{2}(p^{\ast}))<2\epsilon.
\]
On each $(M_{i_{j}},p_{i_{j}})$, since $B_{2r_{j}}(p_{i_{j}}^{\prime}%
)=\rho_{i_{j}}^{-1}B_{2\rho_{i_{j}}r_{j}}(p_{i_{j}})$, we get
\[
d_{GH}(\rho_{i_{j}}^{-1}r_{j}^{-1}B_{2\rho_{i_{j}}r_{j}}(p_{i_{j}}%
),B_{2}(p^{\ast}))<2\epsilon.
\]
Let $R_{j}=\rho_{i_{j}}r_{j}$, $j=1,2,\dots$. It's clear that $R_{j}%
>r_{j}^{-1}\rightarrow\infty$ and (ii) is satisfied. It remains to verify (i)
for $(B_{j}, \tilde p_{j})=R_{i_{j}}^{-1}B_{2R_{i_{j}}}(p_{i_{j}})$. First,
$\mathrm{Ric}_{B_{j}}\geq-\delta_{i_{j}}^{2}R_{j}^{2}=-\delta_{i_{j}}^{2}%
\rho_{i_{j}}^{2}r_{j}^{2}=-r_{j}^{2}\rightarrow0$. Because $R_{j}<\rho_{i_{j}%
}$, by Lemma \ref{l:vol.comp} (ii), we have
\[
vol(B_{2}(\tilde p_{j}))=R_{j}^{-m}vol(B_{2R_{j}}(p_{i_{j}}))=c(m)\mathcal{V}%
_{2R_{j}}(p_{i_{j}})\geq c(m)e^{-c(m)\delta_{i_{j}}\rho_{i_{j}}}%
\mathcal{V}_{2\rho_{i_{j}}}(p_{i_{j}})\geq c(m,v).
\]

\end{proof}

\begin{theorem}
\label{t51} \textbf{(Radii of almost Euclidean balls)} Let $v,\rho>0$ be
constants and $\delta_{i}\rightarrow0$. Let $(M_{i}^{m},p_{i},g_{i})$ be a
sequence of manifolds so that
\begin{equation}
\mathrm{Ric}_{M_{i}}\geq-\delta_{i}^{2} \label{2020DDD}%
\end{equation}
and
\begin{equation}
\mathcal{V}_{R}(p_{i})\geq v\delta_{i}R>0 \label{2020CCC}%
\end{equation}
for all $R\geq\rho$. Passing to a subsequence, there exist $r_{i}%
\rightarrow\infty$, so that the following hold. For any $\epsilon>0$, there is
$N>0$ so that for any $i>N$, there exist $y_{i},z_{i}\in M_{i}$ so that
\begin{align}
&  \frac{1}{2}c(m,v,\epsilon)r_{i}<d(p_{i},y_{i}),\,d(p_{i},z_{i}%
)<2c(m,v,\epsilon)r_{i},\\
&  d(y_{i},z_{i})>c(m,v,\epsilon)r_{i},\\
&  d_{GH}(B_{r_{i}/\epsilon}(y_{i}),B_{r_{i}/\epsilon}(0,%
%TCIMACRO{\U{211d} }%
%BeginExpansion
\mathbb{R}
%EndExpansion
^{m}))<\epsilon r_{i}\text{\quad\textrm{and} \quad}d_{GH}(B_{r_{i}/\epsilon
}(z_{i}),B_{r_{i}/\epsilon}(0,%
%TCIMACRO{\U{211d} }%
%BeginExpansion
\mathbb{R}
%EndExpansion
^{m}))<\epsilon r_{i}.
\end{align}

\end{theorem}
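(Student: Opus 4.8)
The plan is to deduce the statement from the cone structure at infinity (Lemma~\ref{l:tcone.inf}): I would pass to the tangent cone at infinity, then locate on it two points lying on a single ray from the vertex, at comparable distances from the vertex and each carrying an almost-Euclidean ball of radius large in absolute value but small compared with the distance to the vertex, and finally transplant this configuration to the $M_i$ through the Gromov-Hausdorff approximations. Concretely, combining the proof of Lemma~\ref{l:tcone.inf} with a diagonal argument over exhausting radii should give, after passing to a subsequence, scales $R_i\to\infty$ and a metric cone $(C(Z),p^{\ast})$ with vertex $p^{\ast}$ such that $(M_i,p_i,R_i^{-2}g_i)\to(C(Z),p^{\ast})$ in the pointed Gromov-Hausdorff sense on every ball $B_L(p^{\ast})$; here $C(Z)$ is an $m$-dimensional non-collapsed Ricci limit with $\mathrm{Ric}\ge0$, so $\sigma\mapsto\mathcal{V}_\sigma(x)$ is non-increasing on $C(Z)$ by Bishop-Gromov, and $vol(B_1(p^{\ast}))\ge c(m,v)>0$. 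By the Cheeger-Colding theory (\cite{chco1},\cite{chco2}) the regular set of $C(Z)$ has full measure, and each homothety $\mathrm{dil}_s:(r,z)\mapsto(sr,z)$ scales distances on $C(Z)$ by $s$ and fixes $p^{\ast}$. If $C(Z)=\mathbb{R}^m$ the rest is only simpler, so assume $C(Z)\ne\mathbb{R}^m$.

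Next, fix $\epsilon>0$ and let $\tau:=\epsilon^2/10$. The (effective) volume-cone/metric-cone almost rigidity of Cheeger-Colding provides $\delta=\delta(m,\tau)>0$ such that, on a non-collapsed Ricci-$\ge0$ limit, $\mathcal{V}_\sigma(x)>1-\delta$ forces $d_{GH}(B_\rho(x),B_\rho(0,\mathbb{R}^m))\le\tau\rho$ for all $\rho\le\sigma/2$. Since the unit sphere $\{d(p^{\ast},\cdot)=1\}\subset C(Z)$ has positive $\mathcal{H}^{m-1}$-measure and $\mathcal{H}^{m-1}$-a.e.\ point of it is regular in $C(Z)$, I would pick a regular point $q_1=(1,w_0)$ on it; then $\mathcal{V}_\sigma(q_1)\to1$ as $\sigma\to0$, so some $\sigma_1>0$ has $\mathcal{V}_{2\sigma_1}(q_1)>1-\delta$, and monotonicity together with the almost rigidity give $d_{GH}(B_\rho(q_1),B_\rho(0,\mathbb{R}^m))\le\tau\rho$ for all $\rho\le\sigma_1$. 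Setting $s_0:=2/(\epsilon\sigma_1)$ and, on the ray through $w_0$,
\[
y^{\ast}:=(s_0,w_0),\qquad z^{\ast}:=(3s_0,w_0),
\]
the identities $B_\rho(y^{\ast})=\mathrm{dil}_{s_0}(B_{\rho/s_0}(q_1))$ and $B_\rho(z^{\ast})=\mathrm{dil}_{3s_0}(B_{\rho/(3s_0)}(q_1))$ give $d_{GH}(B_\rho(y^{\ast}),B_\rho(0,\mathbb{R}^m))\le\tau\rho$ and $d_{GH}(B_\rho(z^{\ast}),B_\rho(0,\mathbb{R}^m))\le\tau\rho$ for every $\rho\le2/\epsilon$, while $d(p^{\ast},y^{\ast})=s_0$, $d(p^{\ast},z^{\ast})=3s_0$, $d(y^{\ast},z^{\ast})=2s_0$, and the $(2/\epsilon)$-balls about $y^{\ast},z^{\ast}$ lie in $B_L(p^{\ast})$ with $L:=4s_0$.

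With $r_i:=R_i$, I would then use the pointed Gromov-Hausdorff convergence on $B_L(p^{\ast})$ to pick, for each large $i$, points $y_i,z_i\in M_i$ mapping $o(1)$-close to $y^{\ast},z^{\ast}$ under the approximations of $(M_i,R_i^{-2}g_i)$. This forces $d(p_i,y_i)=s_0R_i+o(R_i)$, $d(p_i,z_i)=3s_0R_i+o(R_i)$ and $d(y_i,z_i)=2s_0R_i+o(R_i)$; and since $B_{1/\epsilon}(y^{\ast})$ is $\tau$-close (relative to its radius) to a Euclidean ball (and the margin $\rho\le2/\epsilon$ absorbs the standard errors of passing from $y^{\ast}$ to $y_i$), rescaling back to $g_i$ yields
\[
d_{GH}\big(B_{r_i/\epsilon}(y_i),B_{r_i/\epsilon}(0,\mathbb{R}^m)\big)\le\tfrac{\epsilon}{10}r_i+o(r_i)<\epsilon r_i
\]
for $i$ large, and likewise for $z_i$. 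Taking $c(m,v,\epsilon):=\tfrac74 s_0$ and using $d(p_i,y_i)/r_i\to s_0$, $d(p_i,z_i)/r_i\to3s_0$, $d(y_i,z_i)/r_i\to2s_0$ together with $\tfrac12 c=\tfrac78 s_0<s_0$, $2c=\tfrac72 s_0>3s_0$ and $c=\tfrac74 s_0<2s_0$, all the distance inequalities of the theorem hold with a definite slack once $i$ is large.

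The hard part will be the second step: because the hypothesis $\mathcal{V}_R(p_i)\ge v\delta_iR$ degenerates as $\delta_i\to0$, the limit cone $C(Z)$ need not be $\mathbb{R}^m$, so no ball centered at the vertex $p^{\ast}$, however large, is almost Euclidean, and one is forced to combine the density of the regular set with the cone homotheties to manufacture almost-Euclidean balls that are simultaneously of unbounded absolute radius $r_i/\epsilon\to\infty$, small compared with $d(p_i,y_i)$, and carried by two points of a single ray so that $d(p_i,y_i)$, $d(p_i,z_i)$ and $d(y_i,z_i)$ can all be calibrated against one constant. A secondary technical point is that $\sigma_1$, hence $c$, a priori depends on the subsequence; making it depend only on $(m,v,\epsilon)$ needs the effective form of Cheeger-Colding's almost rigidity, but only positivity of $c(m,v,\epsilon)$ and $r_i\to\infty$ are used in the sequel.
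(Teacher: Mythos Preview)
Your argument is essentially correct but follows a genuinely different route from the paper. The paper stays inside $\bar B_2(p^{\ast})$: it invokes the quantitative stratification of Cheeger--Jiang--Naber \cite{cjn} to bound $\mathcal H^{m-1}\big(\mathcal S^{m-1}_{\epsilon,r}(C(Z))\cap\partial B_1(p^{\ast})\big)<c(m,\epsilon,v)r$, then chooses $r_0=r_0(m,\epsilon,v)$ small enough that two points $y_0,z_0\in\partial B_1(p^{\ast})$ lie outside $\mathcal S^{m-1}_{\epsilon,r_0}$ with $d(y_0,z_0)>c(m,\epsilon,v)$; the almost-Euclidean balls are thus found at a \emph{single} scale $r_0$ on the unit sphere, and the transfer to $M_i$ uses only the convergence on $\bar B_2(p^{\ast})$ already supplied by Lemma~\ref{l:tcone.inf}, with $r_i=r_0R_i\sqrt\epsilon$. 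Your approach instead fixes one regular point $q_1$ on the unit sphere and manufactures the two required points by \emph{dilating along its ray} to heights $s_0$ and $3s_0$. This is more elementary---it avoids \cite{cjn} entirely and needs only the density of the regular set plus the cone homotheties---but it forces you to (i) upgrade Lemma~\ref{l:tcone.inf} to pointed convergence on balls of radius $L=4s_0$ (your diagonal argument handles this, though it is not literally what the lemma states), and (ii) accept that $\sigma_1$, hence $c$, depends on the particular limit cone rather than only on $(m,v,\epsilon)$, a point you correctly flag. The paper's use of \cite{cjn} is precisely what buys the uniform dependence $c=c(m,v,\epsilon)$; as you note, only positivity is used downstream, so your version suffices for the application.
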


\begin{proof}
We adapt the notions from Lemma \ref{l:tcone.inf}. Let $M_{i}$, $\epsilon_{i}$
and $R_{i}$ be the sequences and $C(Z)$ be the metric cone constructed in
Lemma \ref{l:tcone.inf}. As defined in \cite{cjn}, let $\mathcal{S}%
_{\epsilon,r}^{k}$ be the $(k,\epsilon,r)$-singular sets. By \cite[Theorem
1.7.]{cjn} we have $\mathcal{H}^{m}(\mathcal{S}_{\epsilon,r}^{m-1}(C(Z))\cap
B_{3/2}(p^{\ast}))<c(m,\epsilon,v)r$. Thus by the metric cone structure, we
have $\mathcal{H}^{m-1}(\mathcal{S}_{\epsilon,r}^{m-1}(C(Z))\cap\partial
B_{1}(p^{\ast}))<c(m,\epsilon,v)r$. Choosing $r=r_{0}=r_{0}(m,\epsilon,v)>0$
sufficiently small, we can find $y_{0},z_{0}\in\partial B_{1}(p^{\ast
})\setminus\mathcal{S}_{\epsilon,r_{0}}^{m-1}(C(Z))$ with $d(y_{0}%
,z_{0})>c(m,\epsilon, v)$. In summary, we find two points $y_{0},z_{0}\in
B_{1}(p^{\ast})$, so that
\begin{align}
&  d(p^{\ast},y_{0})=1\text{\quad\textsf{and} \quad}d(p^{\ast},z_{0})=1,\\
&  d(y_{0},z_{0})>c(m,\epsilon,v)>0,\\
&  d_{GH}(B_{r_{0}}(y_{0}),B_{r_{0}}(0,%
%TCIMACRO{\U{211d} }%
%BeginExpansion
\mathbb{R}
%EndExpansion
^{m}))<\epsilon{r_{0}}\text{\quad\textrm{and} \quad}d_{GH}(B_{r_{0}}%
(z_{0}),B_{r_{0}}(0,%
%TCIMACRO{\U{211d} }%
%BeginExpansion
\mathbb{R}
%EndExpansion
^{m}))<\epsilon{r_{0}}.
\end{align}
Fix $r_{0}>0$. Because $(\bar{B}_{i}, \tilde p_{i})=R_{i}^{-1}\bar{B}_{2R_{i}%
}(p_{i})\overset{d_{GH}}{\longrightarrow}(\bar{B}_{2}(p^{\ast}), p^{*})$, for
$i$ large, there are $y_{i}^{\prime},z_{i}^{\prime}\in B_{i}$ so that
\begin{align}
&  1-\epsilon<d(p_{i}^{\prime},y_{i}^{\prime})<1+\epsilon\text{\quad
\textrm{and} \quad}1-\epsilon<d(p_{i}^{\prime},z_{i}^{\prime})<1+\epsilon,\\
&  d(y_{i}^{\prime},z_{i}^{\prime})>c(m,\epsilon,v)>0,\\
&  d_{GH}(B_{r_{0}}(y_{i}^{\prime}),B_{r_{0}}(y_{0}))<\epsilon r_{0}%
\text{\quad\textrm{and} \quad}d_{GH}(B_{r_{0}}(z_{i}^{\prime}),B_{r_{0}}%
(z_{0}))<\epsilon r_{0}.
\end{align}
Therefore,
\[
d_{GH}(B_{r_{0}}(y_{i}^{\prime}),B_{r_{0}}(0,%
%TCIMACRO{\U{211d} }%
%BeginExpansion
\mathbb{R}
%EndExpansion
^{m}))<2\epsilon{r_{0}}\text{\quad\textrm{and} \quad}d_{GH}(B_{r_{0}}%
(z_{i}^{\prime}),B_{r_{0}}(0,%
%TCIMACRO{\U{211d} }%
%BeginExpansion
\mathbb{R}
%EndExpansion
^{m}))<2\epsilon{r_{0}}.
\]
Now rescalling $M_{i}$ by $R_{i}$, we find $y_{i},z_{i}\in M_{i}$, so that
\begin{align}
&  (1-\epsilon)R_{i}<d(p_{i},y_{i})<(1+\epsilon)R_{i}\text{\quad\textrm{and}
\quad}(1-\epsilon)R_{i}<d(p_{i},z_{i})<(1+\epsilon)R_{i},\\
&  d(y_{i},z_{i})>c(m,\epsilon,v)R_{i},\\
&  d_{GH}(B_{r_{0}R_{i}}(y_{i}),B_{r_{0}R_{i}}(0,%
%TCIMACRO{\U{211d} }%
%BeginExpansion
\mathbb{R}
%EndExpansion
^{m}))<2r_{0}R_{i}\epsilon\text{\quad\textrm{and} \quad}d_{GH}(B_{r_{0}R_{i}%
}(z_{i}),B_{r_{0}R_{i}}(0,%
%TCIMACRO{\U{211d} }%
%BeginExpansion
\mathbb{R}
%EndExpansion
^{m}))<2r_{0}R_{i}\epsilon.
\end{align}
Let $r_{i}=r_{0}R_{i}\sqrt{\epsilon}$, where $r_{i}\rightarrow\infty$ since
$R_{i}\rightarrow\infty$. We get the desired result:
\[
d_{GH}(B_{\frac{r_{i}}{\sqrt{\epsilon}}}(y_{i}),B_{\frac{r_{i}}{\sqrt
{\epsilon}}}(0,%
%TCIMACRO{\U{211d} }%
%BeginExpansion
\mathbb{R}
%EndExpansion
^{m}))<2r_{i}\sqrt{\epsilon}\text{\quad\textrm{and} \quad}d_{GH}%
(B_{\frac{r_{i}}{\sqrt{\epsilon}}}(z_{i}),B_{\frac{r_{i}}{\sqrt{\epsilon}}}(0,%
%TCIMACRO{\U{211d} }%
%BeginExpansion
\mathbb{R}
%EndExpansion
^{m}))<2r_{i}\sqrt{\epsilon}.
\]

\end{proof}

\section{CR-Holomorphic Charts With the Uniform Size}

In this section, by applying Proposition \ref{P21} and Theorem \ref{t1}, we
will construct a CR holomorphic chart with the uniform size which is a crucial
step to prove the existence of nonconstant CR holomorphic function of
polynomial growth.

Let $\left(  M,J,\theta\right)  $ be a complete noncompact Sasakian $\left(
2n+1\right)  $-manifold of nonnegative pseudohermitian \ Ricci curvature. We
consider a family of Webster Riemannian (adapted) metrics $g_{\lambda}$ of
$(M,J,\theta)$%
\begin{equation}
g_{\lambda}=\frac{1}{2}d\theta+\lambda^{2}\theta\otimes\theta,\text{
\ \ }\lambda>0. \label{2020A}%
\end{equation}
Since the pseudohermitian torsion is vanishing, it follows from (\cite{cc})
that
\begin{equation}
Ric(g_{\lambda})\geq-2\lambda^{2} \label{2020B}%
\end{equation}
and
\[
d\mu_{\lambda}=\frac{\lambda}{2^{n}n!}d\mu.
\]

\begin{theorem}
\label{t2} Let $\left(  M,J,\theta\right)  $ be a complete noncompact Sasakian
$\left(  2n+1\right)  $-manifold of nonnegative pseudohermitian bisectional
curvature. There are constants $\epsilon\left(  n\right)  >0,$ $\delta>0$ so
that if
\[
d_{GH}\left(  B_{g_{\lambda}}\left(  x,\frac{r}{\epsilon}\right)
,B_{g_{\lambda}}^{\mathbf{H}_{n}}\left(  (0,0),\frac{r}{\epsilon}\right)
\right)  <\epsilon r
\]
for some $0<\epsilon<\epsilon\left(  n\right)  $, where $g_{\lambda}$ is the
adapted metric with $\lambda^{2}=\frac{n\epsilon^{3}}{r^{2}},$ then there
exist a CR-holomorphic chart $\left(  w_{1},...w_{n},x^{\prime}\right)  $
containing the Carnot-Carath\'{e}odory ball $B_{cc}\left(  x,\delta r\right)
$ so that%
\begin{equation}
\left\{
\begin{array}
[c]{cl}%
\left(  1\right)  & w_{s}\left(  x\right)  =0\text{ }\mathrm{for}\text{
}\mathrm{any}\text{ }s\in I_{n};\\
\left(  2\right)  & \left\vert
%TCIMACRO{\dsum \limits_{s\in I_{n}}}%
%BeginExpansion
{\displaystyle\sum\limits_{s\in I_{n}}}
%EndExpansion
\left\vert w_{s}\left(  y\right)  \right\vert ^{2}-r_{cc}^{2}\left(  y\right)
\right\vert \leq\Phi\left(  \epsilon|n\right)  r^{2}\text{ }\mathrm{in}\text{
}B_{cc}\left(  x,\delta r\right)  ;\\
\left(  3\right)  & \left\vert \nabla w_{s}\right\vert \leq C\left(
n,\epsilon,r\right)  \text{ }\mathrm{in}\text{ }B_{cc}\left(  x,\delta
r\right)  .
\end{array}
\right.  \label{1}%
\end{equation}
Here $r_{cc}\left(  y\right)  =d_{cc}\left(  x,y\right)  .$ $\mathbf{H}_{n}$
$=$ $%
%TCIMACRO{\U{2102} }%
%BeginExpansion
\mathbb{C}
%EndExpansion
^{n}\times%
%TCIMACRO{\U{211d} }%
%BeginExpansion
\mathbb{R}
%EndExpansion
$ is an $\ (2n+1)$-dimensional Heisenberg group and $\Phi\left(
\epsilon|n\right)  $ is a nonnegative function so that $\Phi\left(
\epsilon|n\right)  \rightarrow0$ as $\epsilon\rightarrow0.$
\end{theorem}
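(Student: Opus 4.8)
The plan is to mimic Liu's construction of almost-holomorphic coordinates in the Kähler setting (\cite{liu2}, \cite{liu3}), transplanted to the Sasakian category via the CR Hörmander estimate (Proposition \ref{P21}) and the CR heat-flow positivity (Theorem \ref{t1}). First I would fix the adapted metric $g_{\lambda}$ with $\lambda^{2}=n\epsilon^{3}/r^{2}$; by (\ref{2020B}) one has $\mathrm{Ric}(g_{\lambda})\ge -2\lambda^{2}=-2n\epsilon^{3}/r^{2}$, which is a tiny negative lower bound on the scale $r/\epsilon$, so on that scale the rescaled ball is almost Ricci-flat. The Gromov–Hausdorff hypothesis then says $B_{g_{\lambda}}(x,r/\epsilon)$ is $\epsilon r$-close to the Euclidean ball $B(0,r/\epsilon)\subset\mathbb{R}^{2n+1}$ with its flat metric (the Heisenberg group with the adapted metric of this $\lambda$ is Gromov–Hausdorff close to $\mathbb{R}^{2n+1}$ at this scale, which is the content of Remark 1.2 and Gromov's theorem). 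Using Cheeger–Colding (segment inequality, almost-rigidity of the Laplacian), I would produce $2n+1$ approximate coordinate functions: real pseudoharmonic functions $b_{1},\dots,b_{2n+1}$ that are $\Phi(\epsilon|n)$-close in an appropriate $C^{0}$ and $L^{2}$-gradient sense to the Euclidean coordinates, with $|\nabla b_{j}|\le C(n,\epsilon,r)$ and Hessian small in $L^{2}$. Splitting the contact distribution off from the Reeb direction, I would assemble $b_{1},\dots,b_{2n}$ into $n$ functions $u_{s}=b_{2s-1}+\sqrt{-1}\,b_{2s}$ whose $\overline{\partial}_{b}$-derivatives are small in $L^{2}$ and set $x'=b_{2n+1}$ along the Reeb direction.

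Next I would correct the $u_{s}$ to genuine CR-holomorphic functions. Running the CR heat flow on suitable cutoffs of $b_{j}$ (with $u_{0}=0$ enforced, i.e. staying in the basic class), Theorem \ref{t1} guarantees that the $(1,1)$-Hessian $v_{\alpha\bar\beta}-\lambda h_{\alpha\bar\beta}$ stays nonnegative; this is exactly the input needed to verify the strict plurisubharmonicity condition (\ref{2020}) for a weight $\varphi$ built from $\sum|u_{s}|^{2}$ plus the heat-regularized distance, on a Stein exhaustion of $B_{cc}(x,\delta r)$. With that weight in hand, Proposition \ref{P21} applied to $g=\overline{\partial}_{b}u_{s}$ (which is $\overline{\partial}_B$-closed since $\partial\circ\overline\partial=0$ on basic forms) yields basic solutions $h_{s}$ of $\overline{\partial}_{B}h_{s}=\overline{\partial}_{B}u_{s}$ with $\int|h_{s}|^{2}e^{-\varphi}\le\int |\overline\partial_B u_s|^2 c^{-1}e^{-\varphi}\le\Phi(\epsilon|n)r^{2}$. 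Setting $w_{s}=u_{s}-h_{s}$ gives the desired CR-holomorphic functions; subtracting constants arranges $w_{s}(x)=0$, which is (1). The $L^{2}$-smallness of $h_{s}$ combined with interior elliptic/subelliptic estimates for $\overline\partial_b$ (the sub-Laplacian is hypoelliptic, so $L^2$ control upgrades to pointwise control on a slightly smaller ball) gives the $C^{0}$ and gradient bounds (3), and the Pythagorean-type estimate (2) follows because $\sum|u_s|^2$ already approximates $r_{cc}^2$ up to $\Phi(\epsilon|n)r^2$ by the coordinate construction, and $|w_s-u_s|=|h_s|$ is small.

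The main obstacle I expect is \textbf{verifying the strict plurisubharmonicity (\ref{2020}) of the weight on the correct region with the correct lower bound $c$}, and keeping careful track of the scale $r$ throughout. In the Kähler case one uses the drift-Laplacian heat equation and the Ni–Tam estimates to show that the heat-regularized distance-squared function is strictly plurisubharmonic away from the cut locus; here one must instead work with the CR heat kernel $H(x,y,t)$, the basic-function constraint $u_{0}=0$, and the Sasakian structure, so the positivity statement of Theorem \ref{t1} must be leveraged to control $(\sum|u_s|^2+\text{heat term})_{\alpha\bar\beta}$ from below by a definite multiple of $h_{\alpha\bar\beta}$ on $B_{cc}(x,\delta r)$. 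A secondary difficulty is the passage from $L^{2}$-bounds in Proposition \ref{P21} to the pointwise bounds (2) and (3): this requires a Moser iteration or Schauder-type argument adapted to the subelliptic operator $\overline{\partial}_{b}$ on a Sasakian manifold with only a small negative Ricci lower bound, using the volume doubling and Poincaré inequality coming from (\ref{2020CCC})-type non-collapsing, which is implied here by the Gromov–Hausdorff closeness to Euclidean space. Once these two points are in place, choosing $\epsilon$ small and $\delta$ correspondingly small makes every error term a $\Phi(\epsilon|n)$, completing the proof.
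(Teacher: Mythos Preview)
Your proposal is correct and follows essentially the same route as the paper: Cheeger--Colding harmonic coordinates $b^{j}$, complex combinations $w'_{s}=b^{2s-1}+ib^{2s}$ with small $\overline{\partial}_{b}$, a heat-flow weight (the paper takes $\varphi=v_{1}$, the time-$1$ CR heat flow of a cutoff of $k=\sum_{j\leq 2n}(b^{j})^{2}$, not a sum of two separate pieces) whose strict plurisubharmonicity is established as Claim~4.1 via Theorem~\ref{t1} and heat-kernel estimates, then the CR H\"{o}rmander correction $w_{s}=w'_{s}-f_{s}$ and the pointwise upgrade via the CR mean-value inequality and the CR Cheng--Yau gradient estimate. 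The Stein domain on which Proposition~\ref{P21} is applied is a sublevel set of $v_{1}$ (Claim~4.2), which is the concrete mechanism behind your ``Stein exhaustion of $B_{cc}(x,\delta r)$''.
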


\begin{proof}
Let $\left\{  e_{j}\right\}  _{j\in I_{2n+1}}$ be an orthonormal frame with
respect to the adapted metric $g_{\lambda}$ with $e_{2n+1}=\frac{1}{\lambda
}\mathbf{T},$ $Z_{\alpha}=\frac{1}{\sqrt{2}}\left(  e_{\alpha}-ie_{\widetilde
{\alpha}}\right)  $ for $\alpha\in I_{n}$ where $e_{\widetilde{\alpha}%
}=Je_{\alpha}$. Here $\lambda\in\left(  0,1\right)  $. May assume $r\gg1$, to
be determined, and set $R=\frac{r}{100}\gg1.$ Because
\[
d_{GH}\left(  B_{g_{\lambda}}\left(  x,\frac{r}{\epsilon}\right)
,B_{g_{\lambda}}^{%
%TCIMACRO{\U{211d} }%
%BeginExpansion
\mathbb{R}
%EndExpansion
^{2n+1}}\left(  0,\frac{r}{\epsilon}\right)  \right)  <\epsilon r,
\]
by Cheeger-Colding theory (\cite[Theorem 1.2.]{chco1} or \cite[(1.23)]{chco2},
\cite{liu4}), there are real harmonic functions $\left\{  b^{j}\right\}
_{j\in I_{2n+1}}$on $B_{g_{\lambda}}\left(  x,4r\right)  $ such that%
\begin{equation}
\left\{
\begin{array}
[c]{cl}%
\left(  1\right)  & \frac{1}{Vol(B_{g_{\lambda}}\left(  x,2r\right)  )}%
%TCIMACRO{\dint \limits_{B_{g_{\lambda}}\left(  x,2r\right)  }}%
%BeginExpansion
{\displaystyle\int\limits_{B_{g_{\lambda}}\left(  x,2r\right)  }}
%EndExpansion
\left(
%TCIMACRO{\dsum \limits_{j\in I_{2n+1}}}%
%BeginExpansion
{\displaystyle\sum\limits_{j\in I_{2n+1}}}
%EndExpansion
\left\vert Hess\left(  b^{j}\right)  \right\vert ^{2}+%
%TCIMACRO{\dsum \limits_{j,k\in I_{2n+1}}}%
%BeginExpansion
{\displaystyle\sum\limits_{j,k\in I_{2n+1}}}
%EndExpansion
\left\vert \left\langle \nabla b^{j},\nabla b^{k}\right\rangle -\delta
_{jk}\right\vert ^{2}\right)  d\mu_{\lambda}\leq\Phi\left(  \epsilon
|n,r\right)  ;\\
\left(  2\right)  & b^{j}\left(  x\right)  =0\text{ }\mathrm{for}\text{
}\mathrm{any}\text{ }j\in I_{2n+1};\\
\left(  3\right)  & \left\vert \nabla b^{j}\right\vert \leq C\left(  n\right)
\text{ }\mathrm{in}\text{ }B_{g_{\lambda}}\left(  x,2r\right)  ,\text{
}\mathrm{for}\text{ }\mathrm{any}\text{ }j\in I_{2n+1};\\
\left(  4\right)  & F\left(  y\right)  =\left(  b^{1},...,b^{2n+1}\right)
\text{ }\mathrm{is}\text{ }\mathrm{a}\text{ }\Phi\left(  \epsilon|n\right)
r\mathrm{-GHA}\text{ }\mathrm{from}\text{ }B_{g_{\lambda}}\left(  x,2r\right)
\text{ }\mathrm{to}\text{ }B_{g_{\lambda}}^{%
%TCIMACRO{\U{211d} }%
%BeginExpansion
\mathbb{R}
%EndExpansion
^{2n+1}}\left(  0,2r\right)  .
\end{array}
\right.  \label{2}%
\end{equation}
By the argument of the Gram-Schmidt orthogonalization (\cite[Lemma 9.14.]%
{cct}, \cite[(20)]{liu3}), namely, doing an orthogonal transformation, we may
assume that those real harmonic functions $\left\{  b^{j}\right\}  _{j\in
I_{n}}$ satisfy
\[
\frac{1}{Vol(B_{g_{\lambda}}\left(  x,r\right)  )}%
%TCIMACRO{\dint \limits_{B_{g_{\lambda}}\left(  x,r\right)  }}%
%BeginExpansion
{\displaystyle\int\limits_{B_{g_{\lambda}}\left(  x,r\right)  }}
%EndExpansion
\left\vert J\nabla b^{2j-1}-\nabla b^{2j}\right\vert ^{2}d\mu_{\lambda}%
\leq\Phi\left(  \epsilon|n,r\right)
\]
for any $j\in I_{n}$. Set
\[
w_{j}^{\prime}=b^{2j-1}+ib^{2j}%
\]
for $1\leq j\leq n.$ Then
\begin{equation}
\frac{1}{Vol(B_{g_{\lambda}}\left(  x,r\right)  )}%
%TCIMACRO{\dint \limits_{B_{g_{\lambda}}\left(  x,r\right)  }}%
%BeginExpansion
{\displaystyle\int\limits_{B_{g_{\lambda}}\left(  x,r\right)  }}
%EndExpansion
\left\vert \overline{\partial}_{b}w_{j}^{\prime}\right\vert ^{2}d\mu_{\lambda
}\leq\Phi\left(  \epsilon|n,r\right)  . \label{3}%
\end{equation}
Here we use the fact that $b^{j}$ are basic functions on $B_{g_{\lambda}%
}\left(  x,2r\right)  $
\begin{equation}
b_{0}^{j}=0 \label{4}%
\end{equation}
for any $j\in I_{2n}$ and
\begin{equation}
\frac{1}{Vol(B_{g_{\lambda}}\left(  x,r\right)  )}%
%TCIMACRO{\dint \limits_{B_{g_{\lambda}}\left(  x,r\right)  }}%
%BeginExpansion
{\displaystyle\int\limits_{B_{g_{\lambda}}\left(  x,r\right)  }}
%EndExpansion
\left\vert b_{0}^{2n+1}-1\right\vert ^{2}d\mu_{\lambda}\leq\Phi\left(
\epsilon|n,r\right)  . \label{4b}%
\end{equation}
This is easily derived from the inequality $\left(  1\right)  $ in $\left(
\ref{2}\right)  $ and the definition of the adapted metric $g_{\lambda}$.
Consider the function
\[
k\left(  y\right)  =%
%TCIMACRO{\dsum \limits_{j\in I_{2n}}}%
%BeginExpansion
{\displaystyle\sum\limits_{j\in I_{2n}}}
%EndExpansion
\left(  b^{j}\left(  y\right)  \right)  ^{2}.
\]
Then, in $B_{g_{\lambda}}\left(  x,r\right)  $
\begin{equation}
\left\{
\begin{array}
[c]{cl}%
\left(  1\right)  & \left\vert k\left(  y\right)  -r^{2}\left(  y\right)
\right\vert \leq\Phi\left(  \epsilon|n\right)  r^{2};\\
\left(  2\right)  & \left\vert \nabla k\left(  y\right)  \right\vert \leq
C\left(  n\right)  r\left(  y\right)  ;\\
\left(  3\right)  &
%TCIMACRO{\dint \limits_{B_{g_{\lambda}}\left(  x,5R\right)  }}%
%BeginExpansion
{\displaystyle\int\limits_{B_{g_{\lambda}}\left(  x,5R\right)  }}
%EndExpansion%
%TCIMACRO{\dsum \limits_{j,l\in I_{2n}}}%
%BeginExpansion
{\displaystyle\sum\limits_{j,l\in I_{2n}}}
%EndExpansion
\left\vert k_{jl}-2(g_{\lambda})_{jl}\right\vert ^{2}d\mu_{\lambda}\leq
\Phi\left(  \epsilon|n,R\right)
\end{array}
\right.  \label{5}%
\end{equation}
by $\left(  \ref{2}\right)  $.

Now you are going to construct the weighted basic function as in Proposition
\ref{P21}. \ Set $\varphi$ to be a smooth function from $\left[
0,+\infty\right)  $ to $\left[  0,+\infty\right)  $ with compact support such
that%
\[
\varphi\left(  t\right)  =\left\{
\begin{array}
[c]{cl}%
t & \mathrm{on}\text{ }\left[  0,1\right]  ,\\
0 & \mathrm{on}\text{ }\left[  2,+\infty\right)  ,
\end{array}
\right.
\]
and $\left\vert \varphi\right\vert ,\left\vert \varphi^{\prime}\right\vert
,\left\vert \varphi^{\prime\prime}\right\vert \leq C\left(  n\right)  $. Let
\[
u\left(  y\right)  =5R^{2}\varphi\left(  \frac{k\left(  y\right)  }{5R^{2}%
}\right)  ,
\]
and%
\[
v\left(  z,t\right)  =v_{t}\left(  z\right)  =%
%TCIMACRO{\dint \limits_{M}}%
%BeginExpansion
{\displaystyle\int\limits_{M}}
%EndExpansion
H\left(  z,y,t\right)  u\left(  y\right)  d\mu\left(  y\right)  ,
\]
where $H\left(  z,y,t\right)  $ and $d\mu$ denote the CR heat kernel and CR
volume element respectively. Moreover, since $k_{0}=0,$ this implies that
$v_{0}=0.$ Now we are able to justify that $v_{1}\left(  z\right)  $ serves as
a weighted basic function as below :

\textbf{Claim 4.1 :}\ $v_{\alpha\overline{\beta}}\left(  z,1\right)  \geq
c\left(  n,\epsilon,R\right)  h_{\alpha\overline{\beta}}>0$ in $B_{g_{\lambda
}}\left(  x,\frac{R}{10}\right)  $ for sufficiently large $R$ and sufficiently
small $\epsilon$.

Proof of \textbf{Claim 4.1. }: Let
\[
\Lambda\left(  z,t\right)  =%
%TCIMACRO{\dint \limits_{M}}%
%BeginExpansion
{\displaystyle\int\limits_{M}}
%EndExpansion
H\left(  z,y,t\right)  \Lambda\left(  y\right)  d\mu\left(  y\right)  =%
%TCIMACRO{\dint \limits_{M}}%
%BeginExpansion
{\displaystyle\int\limits_{M}}
%EndExpansion
H_{\lambda}\left(  z,y,\frac{t}{2}\right)  \Lambda\left(  y\right)
d\mu_{\lambda}\left(  y\right)  ,
\]
where $\Lambda\left(  y\right)  $ is the bottom spectrum of $v_{\alpha
\overline{\beta}}\left(  z,0\right)  =u_{\alpha\overline{\beta}}\left(
z\right)  $. Here for the second equality, we use the similar arguments as in
\cite[page 29]{chl1} and the fact that
\[
u_{0}=0=v_{0};\text{ \ \ }u_{0\alpha\overline{\beta}}=u_{\alpha\overline
{\beta}0}=0
\]
and
\[
\Delta_{g_{\lambda}}\Lambda=2\Delta_{b}\Lambda+\lambda^{-2}T^{2}%
\Lambda=2\Delta_{b}\Lambda.
\]
By the inequality $\left(  3\right)  $ in $\left(  \ref{5}\right)  $,
\[%
%TCIMACRO{\dint \limits_{B_{g_{\lambda}}\left(  x,5R\right)  }}%
%BeginExpansion
{\displaystyle\int\limits_{B_{g_{\lambda}}\left(  x,5R\right)  }}
%EndExpansion
\left\vert k_{\alpha\overline{\beta}}-2h_{\alpha\overline{\beta}}\right\vert
^{2}d\mu_{\lambda}\leq\Phi\left(  \epsilon|n,R\right)  .
\]
Then there exists a set $E=\left[  k_{\alpha\overline{\beta}}|_{B_{g_{\lambda
}}\left(  x,5R\right)  }\geq\frac{1}{2}h_{\alpha\overline{\beta}}\right]  $ so
that
\[
\mu_{\lambda}\left(  B_{g_{\lambda}}\left(  x,5R\right)  \backslash E\right)
\leq\Phi\left(  \epsilon|n,R\right)  .
\]
By the inequality $\left(  2\right)  $ in $\left(  \ref{5}\right)  $, we have
$k\left(  y\right)  \leq5R^{2}$ on $B_{g_{\lambda}}\left(  x,2R\right)  $.
Hence
\[
u=k,
\]
in $B_{g_{\lambda}}\left(  x,2R\right)  $. It is not difficult to observe that%
\begin{equation}%
\begin{array}
[c]{cl}
& \left(
%TCIMACRO{\dint \limits_{B_{g_{\lambda}}\left(  x,2R\right)  \backslash E}}%
%BeginExpansion
{\displaystyle\int\limits_{B_{g_{\lambda}}\left(  x,2R\right)  \backslash E}}
%EndExpansion
\left\vert \Lambda\left(  y\right)  \right\vert ^{2}d\mu_{\lambda}\left(
y\right)  \right)  ^{\frac{1}{2}}\\
\leq & \left(
%TCIMACRO{\dint \limits_{B_{g_{\lambda}}\left(  x,4R\right)  \backslash E}}%
%BeginExpansion
{\displaystyle\int\limits_{B_{g_{\lambda}}\left(  x,4R\right)  \backslash E}}
%EndExpansion%
%TCIMACRO{\dsum \limits_{\alpha,\beta}}%
%BeginExpansion
{\displaystyle\sum\limits_{\alpha,\beta}}
%EndExpansion
\left\vert k_{\alpha\overline{\beta}}\right\vert ^{2}d\mu_{\lambda}\left(
y\right)  \right)  ^{\frac{1}{2}}\\
\leq & \left\{
%TCIMACRO{\dint \limits_{B_{g_{\lambda}}\left(  x,4R\right)  \backslash E}}%
%BeginExpansion
{\displaystyle\int\limits_{B_{g_{\lambda}}\left(  x,4R\right)  \backslash E}}
%EndExpansion
\left[
%TCIMACRO{\dsum \limits_{\alpha,\beta}}%
%BeginExpansion
{\displaystyle\sum\limits_{\alpha,\beta}}
%EndExpansion
\left(  \left\vert k_{\alpha\overline{\beta}}-2h_{\alpha\overline{\beta}%
}\right\vert +\left\vert 2h_{\alpha\overline{\beta}}\right\vert \right)
\right]  ^{2}d\mu_{\lambda}\left(  y\right)  \right\}  ^{\frac{1}{2}}\\
\leq &
%TCIMACRO{\dsum \limits_{\alpha,\beta}}%
%BeginExpansion
{\displaystyle\sum\limits_{\alpha,\beta}}
%EndExpansion
\left(  \left\Vert k_{\alpha\overline{\beta}}-2h_{\alpha\overline{\beta}%
}\right\Vert _{L^{2}\left(  B_{g_{\lambda}}\left(  x,4R\right)  \backslash
E,\mu_{\lambda}\right)  }+2\left\Vert h_{\alpha\overline{\beta}}\right\Vert
_{L^{2}\left(  B_{g_{\lambda}}\left(  x,4R\right)  \backslash E,\mu_{\lambda
}\right)  }\right) \\
\leq & \Phi\left(  \epsilon|n,R\right)
\end{array}
\label{6}%
\end{equation}
and%
\[%
\begin{array}
[c]{ccl}%
\left\vert \Lambda\left(  y\right)  \right\vert  & \leq &
%TCIMACRO{\dsum \limits_{\alpha,\beta}}%
%BeginExpansion
{\displaystyle\sum\limits_{\alpha,\beta}}
%EndExpansion
\left\vert u_{\alpha\overline{\beta}}\left(  y\right)  \right\vert \\
& \leq &
%TCIMACRO{\dsum \limits_{\alpha,\beta}}%
%BeginExpansion
{\displaystyle\sum\limits_{\alpha,\beta}}
%EndExpansion
\left\vert \varphi^{\prime}k_{\alpha\overline{\beta}}+\frac{\varphi
^{\prime\prime}}{5R^{2}}k_{\alpha}k_{\overline{\beta}}\right\vert \\
& \leq &
%TCIMACRO{\dsum \limits_{\alpha,\beta}}%
%BeginExpansion
{\displaystyle\sum\limits_{\alpha,\beta}}
%EndExpansion
\left\vert \varphi^{\prime}\left(  k_{\alpha\overline{\beta}}-2h_{\alpha
\overline{\beta}}\right)  \right\vert +%
%TCIMACRO{\dsum \limits_{\alpha,\beta}}%
%BeginExpansion
{\displaystyle\sum\limits_{\alpha,\beta}}
%EndExpansion
\left\vert 2\varphi^{\prime}h_{\alpha\overline{\beta}}+\frac{\varphi
^{\prime\prime}}{5R^{2}}k_{\alpha}k_{\overline{\beta}}\right\vert \\
& \leq & C\left(  n\right)
%TCIMACRO{\dsum \limits_{\alpha,\beta}}%
%BeginExpansion
{\displaystyle\sum\limits_{\alpha,\beta}}
%EndExpansion
\left(  \left\vert \left(  k_{\alpha\overline{\beta}}-2h_{\alpha
\overline{\beta}}\right)  \right\vert +1\right)
\end{array}
\]
for $y\in B_{g_{\lambda}}\left(  x,5R\right)  $. Accordingly,
\begin{equation}%
%TCIMACRO{\dint \limits_{B_{g_{\lambda}}\left(  x,5R\right)  }}%
%BeginExpansion
{\displaystyle\int\limits_{B_{g_{\lambda}}\left(  x,5R\right)  }}
%EndExpansion
\left\vert \Lambda\left(  y\right)  \right\vert d\mu_{\lambda}\left(
y\right)  \leq C\left(  n\right)  R^{2n+1} \label{7}%
\end{equation}
by Cheeger-Colding theory. From the definition of the set $E$, we see that
\begin{equation}
\Lambda\geq\frac{1}{2} \label{8}%
\end{equation}
on $E\cap B_{g_{\lambda}}\left(  x,2R\right)  $. For any $z\in B_{g_{\lambda}%
}\left(  x,\frac{R}{10}\right)  $,
\begin{equation}%
\begin{array}
[c]{ccl}%
\Lambda\left(  z,1\right)  & = &
%TCIMACRO{\dint \limits_{M}}%
%BeginExpansion
{\displaystyle\int\limits_{M}}
%EndExpansion
H_{\lambda}\left(  z,y,\frac{1}{2}\right)  \Lambda\left(  y\right)
d\mu_{\lambda}\left(  y\right) \\
& \geq & \underset{\left(  i\right)  }{\underbrace{%
%TCIMACRO{\dint \limits_{E\cap B_{g_{\lambda}}\left(  z,1\right)  }}%
%BeginExpansion
{\displaystyle\int\limits_{E\cap B_{g_{\lambda}}\left(  z,1\right)  }}
%EndExpansion
H_{\lambda}\left(  z,y,\frac{1}{2}\right)  \Lambda\left(  y\right)
d\mu_{\lambda}\left(  y\right)  }}+\\
&  & \underset{\left(  ii\right)  }{\underbrace{%
%TCIMACRO{\dint \limits_{B_{g_{\lambda}}\left(  x,2R\right)  \backslash E}}%
%BeginExpansion
{\displaystyle\int\limits_{B_{g_{\lambda}}\left(  x,2R\right)  \backslash E}}
%EndExpansion
H_{\lambda}\left(  z,y,\frac{1}{2}\right)  \Lambda\left(  y\right)
d\mu_{\lambda}\left(  y\right)  }}+\\
&  & \underset{\left(  iii\right)  }{\underbrace{%
%TCIMACRO{\dint \limits_{B_{g_{\lambda}}\left(  x,4R\right)  \backslash
%B_{g_{\lambda}}\left(  x,2R\right)  }}%
%BeginExpansion
{\displaystyle\int\limits_{B_{g_{\lambda}}\left(  x,4R\right)  \backslash
B_{g_{\lambda}}\left(  x,2R\right)  }}
%EndExpansion
H_{\lambda}\left(  z,y,\frac{1}{2}\right)  \Lambda\left(  y\right)
d\mu_{\lambda}\left(  y\right)  }}.
\end{array}
\label{9}%
\end{equation}
Here we utilize the fact that $supp\left(  u\right)  \subseteq B_{g_{\lambda}%
}\left(  x,4R\right)  $. From heat kernel estimates and volume doubling
property, we have

$\left(  i\right)  $%
\begin{equation}%
\begin{array}
[c]{cl}
&
%TCIMACRO{\dint \limits_{E\cap B_{g_{\lambda}}\left(  z,1\right)  }}%
%BeginExpansion
{\displaystyle\int\limits_{E\cap B_{g_{\lambda}}\left(  z,1\right)  }}
%EndExpansion
H_{\lambda}\left(  z,y,\frac{1}{2}\right)  \Lambda\left(  y\right)
d\mu_{\lambda}\left(  y\right) \\
\geq & \frac{1}{2}%
%TCIMACRO{\dint \limits_{E\cap B_{g_{\lambda}}\left(  z,1\right)  }}%
%BeginExpansion
{\displaystyle\int\limits_{E\cap B_{g_{\lambda}}\left(  z,1\right)  }}
%EndExpansion
H_{\lambda}\left(  z,y,\frac{1}{2}\right)  d\mu_{\lambda}\left(  y\right) \\
\geq & c\left(  n,\epsilon,R\right)  >0
\end{array}
\label{10}%
\end{equation}
due to the expression of heat kernel $P_{2n+1}\left(  x,y,t\right)  $ of the
hyperbolic space $\mathcal{H}^{2n+1}$ that
\[
P_{2n+1}\left(  x,y,t\right)  =\left(  -\frac{1}{2\pi}\right)  ^{n}\frac
{1}{\sqrt{4\pi t}}\left(  \frac{1}{\sinh\rho}\frac{\partial}{\partial\rho
}\right)  ^{n}e^{-n^{2}t-\frac{\rho^{2}}{4t}}%
\]
where $\rho=\rho\left(  y\right)  $ denotes $d_{\mathcal{H}}\left(
x,y\right)  $.

$\left(  ii\right)  $%
\begin{equation}%
\begin{array}
[c]{cl}
& \left\vert
%TCIMACRO{\dint \limits_{B_{g_{\lambda}}\left(  x,2R\right)  \backslash E}}%
%BeginExpansion
{\displaystyle\int\limits_{B_{g_{\lambda}}\left(  x,2R\right)  \backslash E}}
%EndExpansion
H_{\lambda}\left(  z,y,\frac{1}{2}\right)  \Lambda\left(  y\right)
d\mu_{\lambda}\left(  y\right)  \right\vert \\
\leq &
%TCIMACRO{\dint \limits_{B_{g_{\lambda}}\left(  x,2R\right)  \backslash E}}%
%BeginExpansion
{\displaystyle\int\limits_{B_{g_{\lambda}}\left(  x,2R\right)  \backslash E}}
%EndExpansion
H_{\lambda}\left(  z,y,\frac{1}{2}\right)  \left\vert \Lambda\left(  y\right)
\right\vert d\mu_{\lambda}\left(  y\right) \\
\leq & C\left(  n,\epsilon,R\right)
%TCIMACRO{\dint \limits_{B_{g_{\lambda}}\left(  x,2R\right)  \backslash E}}%
%BeginExpansion
{\displaystyle\int\limits_{B_{g_{\lambda}}\left(  x,2R\right)  \backslash E}}
%EndExpansion
\left\vert \Lambda\left(  y\right)  \right\vert d\mu_{\lambda}\left(  y\right)
\\
\leq & C\left(  n,\epsilon,R\right)  \left\Vert \Lambda\right\Vert
_{L^{2}\left(  B_{g_{\lambda}}\left(  x,2R\right)  \backslash E,\mu_{\lambda
}\right)  }\mu_{\lambda}^{\frac{1}{2}}\left(  B_{g_{\lambda}}\left(
x,2R\right)  \backslash E\right) \\
\leq & \Phi\left(  \epsilon|n,R\right)
\end{array}
\label{11}%
\end{equation}
by the inequality $\left(  \ref{6}\right)  $.

$\left(  iii\right)  $%
\begin{equation}%
\begin{array}
[c]{cl}
&
%TCIMACRO{\dint \limits_{B_{g_{\lambda}}\left(  x,4R\right)  \backslash
%B_{g_{\lambda}}\left(  x,2R\right)  }}%
%BeginExpansion
{\displaystyle\int\limits_{B_{g_{\lambda}}\left(  x,4R\right)  \backslash
B_{g_{\lambda}}\left(  x,2R\right)  }}
%EndExpansion
\left\vert H_{\lambda}\left(  z,y,\frac{1}{2}\right)  \Lambda\left(  y\right)
\right\vert d\mu_{\lambda}\left(  y\right) \\
\leq & C\left(  n,\epsilon,R\right)  \exp\left(  -\frac{R^{2}}{5}\right)
R^{2n+1}\\
= & o\left(  1\right)
\end{array}
\label{12}%
\end{equation}
as $R\longrightarrow+\infty$.

Letting $R$ be sufficiently large and then $\epsilon$ be very small, $\left(
\ref{9}\right)  -\left(  \ref{12}\right)  $ give us
\[
\Lambda\left(  z,1\right)  \geq c\left(  n,\epsilon,R\right)  h_{\alpha
\overline{\beta}}>0
\]
on $B_{g_{\lambda}}\left(  x,\frac{R}{10}\right)  $. Combined this with
Theorem \ref{t1}, we complete the proof of \textbf{Claim 4.1. }The heat kernel
estimate enables us to conclude the proof easily as follows (\cite{liu2}) :

\textbf{Claim 4.2 :}\quad There exists a number $\epsilon_{0}=\epsilon
_{0}\left(  n,\epsilon,R\right)  >0$ such that
\[
4\sup_{B_{g_{\lambda}}\left(  x,\epsilon_{0}\frac{R}{20}\right)  }v_{1}%
<\inf_{\partial B_{g_{\lambda}}\left(  x,\frac{R}{20}\right)  }v_{1}%
\]
for sufficiently large $R$.

Therefore, we could choose a sufficiently large $R\left(  n\right)  >0$ such
that \textbf{Claim 4.1. and Claim 4.2.} hold and $\epsilon_{0}\frac{R}%
{20}>100$. Let $\Omega$ be a connected component of
\[
\left\{  y\in B_{g_{\lambda}}\left(  x,\frac{R}{20}\right)  |\text{ \ \ }%
v_{1}\left(  y\right)  <2\sup_{B_{g_{\lambda}}\left(  x,\epsilon_{0}\frac
{R}{20}\right)  }v_{1}\right\}
\]
containing $B_{g_{\lambda}}\left(  x,\epsilon_{0}\frac{R}{20}\right)  $. With
the help of \textbf{Claim 4.2.}, we know that $\Omega$ is relatively compact
in $B_{g_{\lambda}}\left(  x,\frac{R}{20}\right)  $ and $\Omega$ is Stein by
\textbf{Claim 4.1.} \ As in \cite{liu2}, by the CR analogue of H\"{o}rmander's
$L^{2}$-estimate for the space of basic forms (Proposition \ref{P21}), we
choose
\[
g_{s}:=\overline{\partial}_{B}w_{s}^{\prime}\text{ \ \ \textrm{and } }%
\varphi:=v_{1}%
\]
so that there exists a smooth basic function $f_{s}$ in $\Omega$ with
\begin{equation}
\overline{\partial}_{B}f_{s}=g_{s} \label{22}%
\end{equation}
such that%
\begin{equation}
\int_{\Omega}|f_{s}|^{2}e^{-\varphi}d\mu\leq\int_{\Omega}\frac{|g|^{2}}%
{c}e^{-\varphi}d\mu\leq\frac{1}{c\left(  n,\epsilon,R\right)  }\int_{\Omega
}|\overline{\partial}_{B}w_{s}^{\prime}|^{2}d\mu\leq\Phi\left(  \epsilon
|n\right)  . \label{22b}%
\end{equation}
But by Lemma \ref{l1}, it follows from (\ref{22b}) that
\[%
%TCIMACRO{\dint \limits_{\Omega}}%
%BeginExpansion
{\displaystyle\int\limits_{\Omega}}
%EndExpansion
\left\vert f_{s}\right\vert ^{2}d\mu\leq\Phi\left(  \epsilon|n\right)  .
\]
Hence, it is not difficult to deduce that
\[
w_{s}:=w_{s}^{\prime}-f_{s}%
\]
is a CR-holomorphic function by observing that
\begin{equation}
(w_{s}^{\prime})_{0}=0=(f_{s})_{0} \label{23}%
\end{equation}
due to the vanishing torsion. This implies $w_{s}$ is pseudoharmonic. By the
CR mean value inequality (\cite{ccht}) and the CR analogue of Cheng-Yau's
gradient estimate (\cite{cklt}), we obtain
\begin{equation}
\left\vert f_{s}\right\vert \leq\Phi\left(  \epsilon|n\right)  \text{\quad}
\label{21a}%
\end{equation}
and%
\begin{equation}
\left\vert \nabla f_{s}\right\vert \leq\Phi\left(  \epsilon|n\right)
\label{21b}%
\end{equation}
in $B_{g_{\lambda}}\left(  x,5\right)  $. Then, by $\left(  1\right)  $ in
$\left(  \ref{2}\right)  $, we have%
\begin{equation}%
%TCIMACRO{\dint \limits_{B_{g_{\lambda}}\left(  x,4\right)  }}%
%BeginExpansion
{\displaystyle\int\limits_{B_{g_{\lambda}}\left(  x,4\right)  }}
%EndExpansion
\left\vert \left(  w_{s}\right)  _{i}\overline{\left(  w_{t}\right)  _{j}%
}\left(  g^{\lambda}\right)  ^{i\overline{j}}-2\delta_{ij}\right\vert
d\mu_{\lambda}\leq\Phi\left(  \epsilon|n\right)  . \label{21}%
\end{equation}

\ \ By comparing (\ref{2018}), we observe that $\left\{  Z_{\alpha}%
=\frac{\partial}{\partial z^{\alpha}}+i\overline{z}^{\alpha}\frac{\partial
}{\partial t}\right\}  _{\alpha=1}^{n}$ is exactly a local frame in the
$(2n+1)$-dimensional Heisenberg group $\mathbf{H}_{n}$ $=$ $%
%TCIMACRO{\U{2102} }%
%BeginExpansion
\mathbb{C}
%EndExpansion
^{n}\times%
%TCIMACRO{\U{211d} }%
%BeginExpansion
\mathbb{R}
%EndExpansion
$ with the local coordinate $(z,t)$. Here
\[
\theta=dt+i%
%TCIMACRO{\dsum \limits_{\alpha\in I_{n}}}%
%BeginExpansion
{\displaystyle\sum\limits_{\alpha\in I_{n}}}
%EndExpansion
\left(  z^{\alpha}d\overline{z}^{\alpha}-\overline{z}^{\alpha}dz^{\alpha
}\right)
\]
is a pseudohermitian contact structure on $\mathbf{H}_{n}$ and $T=\frac
{\partial}{\partial\widetilde{t}}$. Hence from (\ref{23}) and (\ref{2018})
that
\begin{equation}
Z_{\alpha}(w_{s})=0=Z_{\alpha}(f_{s}). \label{24}%
\end{equation}
Consider the map
\[
\Psi:\underset{\left(  (w,b_{2n+1}),x\right)  \text{ \ }\mapsto\left(
(z,t),(\mathbf{0},0)\right)  }{B_{g_{\lambda}}\left(  x,\widetilde{c}\left(
n\right)  \right)  \longrightarrow B_{g_{\lambda}}^{\mathbf{H}_{n}}\left(
(\mathbf{0},0),\widetilde{c}\left(  n\right)  \right)  }.
\]
It follows from (\ref{21a}), (\ref{21b}), (\ref{21}), (\ref{24}) and
\textbf{claim 3.2} as in \cite[section $3$.]{liu2}, we have that
$\Psi|_{\widetilde{D_{\alpha}^{n}}}$ is an isomorphism
\[
\Psi|_{\widetilde{D_{\alpha}^{n}}}:\underset{\left(  w_{s},\widetilde
{x}\right)  \text{ \ }\mapsto\left(  z^{j},\mathbf{0}\right)  }{B_{g_{\lambda
}}\left(  x,\widetilde{c}\left(  n\right)  \right)  |_{\widetilde{D_{\alpha
}^{n}}}\longrightarrow B_{g_{\lambda}}^{\mathbf{H}_{n}}\left(  (\mathbf{0}%
,0),\widetilde{c}\left(  n\right)  \right)  |_{V_{\alpha}}},\text{
\ \ }(\widetilde{x},\widetilde{t})\in\widetilde{D_{\alpha}^{n}}%
\]
along the slice $\widetilde{D_{\alpha}^{n}}$ with%
\[
Z_{\alpha}(w_{s})=0\text{ \ \ \textrm{and} \ }(w_{s})_{0}=0.
\]
These together with (\ref{4b}), we have
\[
\Psi:\underset{\left(  (w,b_{2n+1}),x\right)  \text{ \ }\mapsto\left(
(z,t),(\mathbf{0},0)\right)  }{B_{g_{\lambda}}\left(  x,\widetilde{c}\left(
n\right)  \right)  \longrightarrow B_{g_{\lambda}}^{\mathbf{H}_{n}}\left(
(\mathbf{0},0),\widetilde{c}\left(  n\right)  \right)  }%
\]
is an CR-isomorphism from $B_{g_{\lambda}}\left(  x,\widetilde{c}\left(
n\right)  \right)  $ by defining along the corresponding geodesic integral
curve with respect to the Killing Reeb vector field $T=\frac{\partial
}{\partial\widetilde{t}}$
\[
\left(  b_{2n+1},x\right)  \text{ \ }\mapsto\left(  t,(\mathbf{0},0)\right)
.
\]
Finally, we can make a small perturbation such that $w_{s}(x)$ for $1\leq
s\leq n$. Then the proof of Theorem \ref{t2} is completed.
\end{proof}

\section{Nonconstant CR-Holomorphic Functions of Polynomial Growth}

Based on the Cheeger-Colding theory, CR heat flow technique and CR Hormander
$L^{2}$-estimate, we construct CR-holomorphic functions with controlled growth
in a sequence of exhaustion domains on $M$ by the tangent cone at infinity. On
the other hand, the CR three circle theorem (\cite{chl2}) ensures that we can
take subsequence to obtain a nonconstant CR holomorphic function of polynomial growth.

\begin{theorem}
\label{T61} Let $\left(  M,J,\theta\right)  $ be a complete noncompact
Sasakian $\left(  2n+1\right)  $-manifold of nonnegative pseudohermitian
bisectional curvature with the CR maximal volume growth property. Then there
exists a nonconstant CR-holomorphic function with polynomial growth on $M$.
\end{theorem}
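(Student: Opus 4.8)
The plan is to run Gang Liu's strategy for the Kähler uniformization theorem (\cite{liu3}, \cite{liu2}) in the CR/Sasakian setting, using the three key tools assembled in the preliminaries: the CR Hörmander $L^2$-estimate (Proposition \ref{P21}), the heat-flow positivity estimate for $\eta_{\alpha\overline{\beta}}$ (Theorem \ref{t1}), and the CR three-circle theorem (Theorem \ref{TA}). The starting point is the maximal volume growth hypothesis (\ref{2020AAA}) for $B_{cc}(p,r)$ together with $Ric(g_\lambda)\geq -2\lambda^2$ from (\ref{2020B}); choosing the adapted metric $g_{\lambda}$ with $\lambda$ shrinking appropriately as the scale grows puts us exactly in the framework of Section 3. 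Concretely, rescaling $M$ at a sequence of radii $r_i\to\infty$ and taking $\lambda_i^2 \sim n\epsilon^3/r_i^2$, the manifolds $(M, g_{\lambda_i})$ have $Ric \geq -\delta_i^2$ with $\delta_i\to 0$ and the volume lower bound $\mathcal{V}_R(p)\geq v\delta_i R$ forced by CR maximal volume growth (this is where Gromov's exponent $2n+2$ in the Remark is used). Theorem \ref{t51} then produces, for each $i$, points $y_i$ whose large metric balls $B_{r_i/\epsilon}(y_i)$ are $\epsilon r_i$-Gromov–Hausdorff close to Euclidean balls $B_{r_i/\epsilon}(0,\mathbb{R}^{2n+1})$ in the $g_{\lambda_i}$-metric.

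Next I would feed these almost-Euclidean balls into Theorem \ref{t2}: at each such $y_i$ there is a CR-holomorphic chart $(w_1^{(i)},\dots,w_n^{(i)},x')$ defined on a Carnot–Carathéodory ball $B_{cc}(y_i,\delta r_i)$, normalized so that $w_s^{(i)}(y_i)=0$, with $\big|\sum_s |w_s^{(i)}|^2 - r_{cc}^2\big|\leq \Phi(\epsilon|n) r_i^2$ and controlled gradient. The point of the chart is that the coordinate functions $w_s^{(i)}$ are genuine nonconstant CR-holomorphic functions on a large (in rescaled terms) ball, and the quadratic estimate (2) in (\ref{1}) gives a lower bound on how fast $\sum_s |w_s^{(i)}|^2$ grows away from $y_i$, so in particular at least one $w_s^{(i)}$ is nonconstant and its sup over $B_{cc}(y_i, \delta r_i/2)$ is comparable to $\delta r_i$. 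One then transplants these to a fixed base point $p$: since $d(p,y_i)$ is comparable to $r_i$ by Theorem \ref{t51}, the chart functions, suitably renormalized to vanish at $p$ and scaled to have $M_{f_i}(1)=1$ say, are CR-holomorphic on $B_{cc}(p, c r_i)$ with $r_i\to\infty$.

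The final step is a normal-families / diagonal argument to extract a nonconstant limit, and here the CR three-circle theorem (Theorem \ref{TA}) is the essential device, exactly as in \cite{liu2}: because $\log M_{f_i}(r)$ is convex in $\log r$ and the ratio $M_{f_i}(kr)/M_{f_i}(r)$ is monotone in $r$, once we normalize $M_{f_i}(r_i^{1/2})=1$ (or some intermediate scale) we get uniform polynomial growth bounds $M_{f_i}(r)\leq C(1+r)^d$ on $B_{cc}(p,r_i)$ for a fixed degree $d$ independent of $i$, together with a uniform \emph{lower} bound $M_{f_i}(r)\geq c>0$ at a fixed radius preventing collapse to the zero function. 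The CR analogue of Cheng–Yau's gradient estimate (\cite{cklt}) applied to $f_i$ (which is pseudoharmonic since $T f_i=0$ by vanishing torsion, cf. (\ref{23})) gives uniform local $C^1$-bounds, so by Arzelà–Ascoli and a diagonal argument over an exhaustion $B_{cc}(p,1)\subset B_{cc}(p,2)\subset\cdots$ we pass to a subsequence converging locally uniformly to $f_\infty$; the limit satisfies $\overline{\partial}_b f_\infty=0$, $(f_\infty)_0=0$, obeys the polynomial bound, and is nonconstant by the uniform lower bound — hence $f_\infty\in\mathcal{O}_d^{CR}(M)$ is the desired function.

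The main obstacle I anticipate is Theorem \ref{t2}, i.e. producing the CR-holomorphic chart with \emph{uniform} size from Gromov–Hausdorff closeness to a Euclidean (equivalently, flat Heisenberg) ball. The delicate points are: (a) solving $\overline{\partial}_B f_s = \overline{\partial}_B w_s'$ with the $L^2$-error $\Phi(\epsilon|n)$ bound requires constructing, via the CR heat flow (Theorem \ref{t1}), a weighted basic plurisubharmonic function $\varphi=v_1$ with $i\partial_B\overline{\partial}_B\varphi \geq c\, d\theta$ on a domain $\Omega$ that is both relatively compact and Stein — this is Claim 4.1 and Claim 4.2, and it is where the heat-kernel estimates and the non-collapsing volume bound are used most heavily; and (b) controlling the Reeb ($T$-)direction, which has no Kähler counterpart: one must verify $b_0^{2n+1}\approx 1$ and $b_0^j\approx 0$ for the harmonic approximating functions (this uses that the adapted metric splits the Reeb direction off with length $\lambda$) so that the corrected functions $w_s = w_s' - f_s$ are not only CR-holomorphic but also satisfy $(w_s)_0=0$, and so that the resulting map is a CR-isomorphism onto a Heisenberg ball along the Reeb flow. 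Everything downstream — the transplantation to $p$ and the three-circle compactness argument — is comparatively routine once Theorem \ref{t2} is in hand.
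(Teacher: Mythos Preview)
Your overall architecture is right (Cheeger--Colding $\Rightarrow$ almost-Euclidean balls $\Rightarrow$ CR heat flow + H\"ormander $\Rightarrow$ three-circle compactness), and your identification of Theorem~\ref{t2} as the hard analytic input is accurate. But there is a genuine gap in your transplantation step, and it is exactly the point where the paper's proof diverges from what you wrote.

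The chart functions $w_s^{(i)}$ produced by Theorem~\ref{t2} live only on $B_{cc}(y_i,\delta r_i)$, and by Theorem~\ref{t51} one has $d(p,y_i)\sim c(n,v,\epsilon) r_i$ with $\delta$ small; hence $p$ is \emph{not} in the domain of $w_s^{(i)}$, and $B_{cc}(p,cr_i)$ is certainly not contained in it. So you cannot ``renormalize to vanish at $p$'' or claim these functions are CR-holomorphic on a large ball about $p$. The chart is purely local near $y_i$; by itself it gives nothing centred at the reference point.

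What the paper actually does is a \emph{second} heat-flow/H\"ormander construction, this time on a large ball around $p_i$. Theorem~\ref{t51} is invoked to produce \emph{two} separated regular points $y_i,z_i\in\partial B(p_i,1)$ (in the rescaled metric), and Theorem~\ref{t2} is applied at each to get two local charts. These charts are not used as the holomorphic functions themselves; instead they are used to build a weight
\[
\psi_i \;=\; q_i + C(n,v,\delta)\,v_{i,1},
\]
where $q_i$ has logarithmic poles $4n\log\bigl(\sum_s|w_s^i|^2\bigr)$ and $4n\log\bigl(\sum_s|v_s^i|^2\bigr)$ at $y_i$ and $z_i$ (Claim~5.3), and $v_{i,1}$ is a strictly transversally plurisubharmonic potential on $B(p_i,R/10)$ obtained by running the CR heat flow on a Cheeger--Colding approximation of $\tfrac12 r_i^2$ (Claims~5.1--5.2). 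On the resulting Stein domain $\Omega_i\supset B(p_i,\epsilon_0 R/20)$ one solves $\overline{\partial}_B h_i=\overline{\partial}_B f_i$ with $f_i$ a cutoff equal to $1$ near $y_i$ and $0$ outside $B(y_i,\delta)$. The log-pole in $\psi_i$ forces $h_i(y_i)=h_i(z_i)=0$, so $\mu_i=f_i-h_i$ is CR-holomorphic on a ball about $p_i$ with $\mu_i(y_i)=1$, $\mu_i(z_i)=0$. It is this two-point pinning, not the growth estimate in (\ref{1}), that gives the quantitative nonconstancy bound $M_i'(1)\geq\tfrac12$ feeding into the three-circle step; your single-point scheme has no such mechanism once you leave the chart ball.

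In short: Theorem~\ref{t2} is an \emph{ingredient} for building the weight, not the source of the global functions. You are missing the entire ``global weight $+$ cutoff $+$ second $\overline{\partial}_B$'' layer, together with the role of the second point $z_i$.
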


\begin{proof}
Fix $p\in M$ as the reference point. Let $\delta_{i}\rightarrow0$ and define
$(M_{i},p_{i},g_{i})=(M,p,g^{\delta_{i}})$. Unlike the case in \cite{liu1},
the lower bound of $Ric_{M_{i}}$ is not zero. Thus we can't use
Cheeger-Colding theory directly on the tangent cones at infinity. In order to
get a metric cone by blowing down, we need a careful selection of the radii as
well as a stronger volume growth condition. Theorem \ref{t51} was developed to
handle this situation.

We first show that $M_{i}$ satisfy the conditions of Theorem \ref{t51}. Since
the pseudohermitian Ricc curvature is nonnegative, the Ricci curvature with
respect to the Webster metric
\[
g^{\delta_{i}}=h+\delta_{i}^{2}\theta\otimes\theta
\]
says that
\[
Ric_{M_{i}}\geq-2\delta_{i}^{2}.
\]
We observe that the CR
%TCIMACRO{\QTR{frametitle}{maximal volume growth condition}}%
%BeginExpansion
\title{maximal volume growth condition}%
%EndExpansion%
\[
\lim_{r\rightarrow\infty}\frac{Vol_{cc}(B_{r}(p))}{r^{2n+2}}\rightarrow v>0.
\]
That is, there exists $\rho>0$, depending only on $M$, so that $\frac
{Vol_{cc}(B_{r}(p))}{r^{2n+2}}\geq\frac{1}{2}v$ for all $r\geq\rho$. It is
important that for any $i$, we have
\begin{equation}
\frac{Vol_{M_{i}}(B_{r}(p_{i}))}{r^{2n+1}}=r\delta_{i}\cdot\frac
{Vol_{cc}(B_{r}(p))}{r^{2n+2}}\geq r\delta_{i}\cdot\frac{v}{2}.
\label{2020BBB}%
\end{equation}
for all $r\geq\rho$.

Now we have the property (\ref{2020CCC}) and then can apply Theorem \ref{t51}.
\ Passing to a subsequence of $M_{i}$, for any $\epsilon>0$, there exist
$\widetilde{r}_{i}\rightarrow\infty$ and $y_{i},z_{i}\in M_{i}$ so that
\[
\left\{
\begin{array}
[c]{l}%
\frac{1}{2}c(n,v,\epsilon)\widetilde{r}_{i}<d(p_{i},y_{i})\text{ \textrm{and
}}\,d(p_{i},z_{i})<2c(n,v,\epsilon)\widetilde{r}_{i}\\
d(y_{i},z_{i})>c(n,v,\epsilon)\widetilde{r}_{i}\\
d_{GH}(B_{\widetilde{r}_{i}/\epsilon}(y_{i}),B_{\widetilde{r}_{i}/\epsilon}(0,%
%TCIMACRO{\U{211d} }%
%BeginExpansion
\mathbb{R}
%EndExpansion
^{2n+1}))<\epsilon\widetilde{r}_{i}\text{ \textrm{and }}d_{GH}(B_{\widetilde
{r}_{i}/\epsilon}(z_{i}),B_{\widetilde{r}_{i}/\epsilon}(0,%
%TCIMACRO{\U{211d} }%
%BeginExpansion
\mathbb{R}
%EndExpansion
^{2n+1}))<\epsilon\widetilde{r}_{i}%
\end{array}
\right.  .
\]

By adapting $M_{i}$ (may assume that $y_{i}$ and $z_{i}$ both lie on $\partial
B\left(  p_{i},1\right)  $), Theorem \ref{t2} shows that there are the
CR-holomorphic charts $\left\{  w_{s}^{i},x^{i}\right\}  _{s\in I_{n}}$ and
$\left\{  v_{s}^{i},\widetilde{x}^{i}\right\}  _{s\in I_{n}}$ around $y_{i}$
and $z_{i}$ for any $i\in%
%TCIMACRO{\U{2115} }%
%BeginExpansion
\mathbb{N}
%EndExpansion
$. That is, by choosing a sufficiently small number $\delta$, we may assume
$\left\{  w_{s}^{i},x^{i}\right\}  _{s\in I_{n}}$ and $\left\{  v_{s}%
^{i},\widetilde{x}^{i}\right\}  _{s\in I_{n}}$ are the CR-holomorphic charts
in $B(y_{i},\delta)$ and $B(z_{i},\delta),$ respectively. Furthermore, we have%
\[
\left\{
\begin{array}
[c]{cl}%
\left(  1\right)  & w_{s}^{i}\left(  y_{i}\right)  =0=v_{s}^{i}\left(
z_{i}\right)  \text{ }\mathrm{for}\text{ }\mathrm{any}\text{ }s\in I_{n}.\\
\left(  2\right)  &
\begin{array}
[c]{l}%
\left\vert
%TCIMACRO{\dsum \limits_{s\in I_{n}}}%
%BeginExpansion
{\displaystyle\sum\limits_{s\in I_{n}}}
%EndExpansion
\left\vert w_{s}^{i}\left(  y\right)  \right\vert ^{2}-d_{i}\left(
y,y_{i}\right)  \right\vert \leq\Phi\left(  \epsilon|n\right)  \delta
^{2}\text{ }\mathrm{in}\text{ }B(y_{i},\delta),\\
\left\vert
%TCIMACRO{\dsum \limits_{s\in I_{n}}}%
%BeginExpansion
{\displaystyle\sum\limits_{s\in I_{n}}}
%EndExpansion
\left\vert v_{s}^{i}\left(  z\right)  \right\vert ^{2}-d_{i}\left(
z,z_{i}\right)  \right\vert \leq\Phi\left(  \epsilon|n\right)  \delta
^{2}\text{ }\mathrm{in}\text{ }B(z_{i},\delta).
\end{array}
\\
\left(  3\right)  &
\begin{array}
[c]{l}%
\left\vert \nabla w_{s}^{i}\right\vert \leq C\left(  n\right)  \text{
}\mathrm{in}\text{ }B(y_{i},\delta),\\
\left\vert \nabla v_{s}^{i}\right\vert \leq C\left(  n\right)  \text{
}\mathrm{in}\text{ }B(z_{i},\delta).
\end{array}
\end{array}
\right.
\]

Subsequently, we need to construct a weight function on $B(p_{i},R)$ where $R$
will be determined later. For the sake of Ricci curvature bounded from below.
Denote the annulus $A_{i}\left(  p_{i};\frac{1}{5R},5R\right)  $ in $M_{i}$ by
$A_{i}$. By the Cheeger-Colding theory (\cite[(4.2) and (4.3)]{liu4},
\cite[(4.43) and (4.82)]{chco1}), it ensures that there exists a smooth
function $\rho_{i}$ on $M_{i}$ such that%
\begin{equation}
\left\{
\begin{array}
[c]{l}%
%TCIMACRO{\dint \limits_{A_{i}}}%
%BeginExpansion
{\displaystyle\int\limits_{A_{i}}}
%EndExpansion
(\left\vert \nabla\rho_{i}-\frac{1}{2}\nabla r_{i}^{2}\right\vert
^{2}+\left\vert Hess\left(  \rho_{i}\right)  -g_{i}\right\vert ^{2}%
)<\Phi\left(  \frac{1}{i}|n,R\right)  ,\\
\left\vert \rho_{i}-\frac{1}{2}r_{i}^{2}\right\vert <\Phi\left(  \frac{1}%
{i}|n,R\right)  \text{ }\mathrm{in}\text{ }A_{i}.
\end{array}
\right.  \label{p1}%
\end{equation}
Here $r_{i}\left(  y\right)  =d_{i}\left(  y,p_{i}\right)  $. Besides, we have
(\cite[(4.20)-(4.23)]{chco1})
\begin{equation}
\rho_{i}=\frac{1}{2}\left(  F_{i}^{-1}\mathcal{G}_{i}\right)  ^{2}, \label{p2}%
\end{equation}
where $F_{i}$ is the Green function on the $\left(  2n+1\right)  $-dimensional
real space form of Ricci curvature $-2\delta_{i}^{2}$ and $\mathcal{G}_{i}$ is
the harmonic function on the annulus $A_{i}\left(  p_{i};\frac{1}%
{10R},10R\right)  $ with the boundary condition
\[
\mathcal{G}_{i}|_{\partial A_{i}\left(  p_{i};\frac{1}{10R},10R\right)
}=F_{i}.
\]
Note that as in \cite[Lemma 3.1.]{cklt}, there is an orthonormal frame such
that $(r_{i})_{0}=0,$ $|\nabla r_{i}|=1$ and then $(\rho_{i})_{0}=0$ by
(\ref{p1}). \ With the help of (\ref{p1}) and (\ref{p2}), the CR analogue of
Cheng-Yau's gradient estimate (\cite[(3.2)]{cklt}) gives us that, on the
annulus $A_{i}$,
\begin{equation}
\left\vert \nabla\rho_{i}\right\vert \left(  y\right)  \leq C\left(  n\right)
r_{i}\left(  y\right)  , \label{p3}%
\end{equation}
for sufficiently large $i$. Let $\overline{\varphi}:%
%TCIMACRO{\U{211d} }%
%BeginExpansion
\mathbb{R}
%EndExpansion
^{+}\longrightarrow%
%TCIMACRO{\U{211d} }%
%BeginExpansion
\mathbb{R}
%EndExpansion
^{+}$ be a smooth function with%
\[
\overline{\varphi}\left(  t\right)  =\left\{
\begin{array}
[c]{cl}%
0 & \mathrm{on}\text{ }\left[  0,1\right]  ,\\
t & \mathrm{on}\text{ }\left[  2,+\infty\right)  ,
\end{array}
\right.
\]
and
\begin{equation}
\sup_{\left[  0,2\right]  }\left\{  \left\vert \overline{\varphi}\right\vert
,\left\vert \overline{\varphi}^{\prime}\right\vert ,\left\vert \overline
{\varphi}^{\prime\prime}\right\vert \right\}  \leq C\left(  n\right)  .
\label{p6}%
\end{equation}
Set%
\[
u_{i}=\frac{1}{R^{2}}\overline{\varphi}\left(  R^{2}\rho_{i}\right)
\]
on $A_{i}$. Extend $\overline{\varphi}$ to be zero in $B\left(  p_{i},\frac
{1}{5R}\right)  $. As for the hypotheses of \textbf{Claim 5.2.}, we need the
following facts :

\textbf{Claim 5.1 :}
\begin{equation}
\left\{
\begin{array}
[c]{l}%
%TCIMACRO{\dint \limits_{B\left(  p_{i},4R\right)  }}%
%BeginExpansion
{\displaystyle\int\limits_{B\left(  p_{i},4R\right)  }}
%EndExpansion
(\left\vert \nabla u_{i}-\frac{1}{2}\nabla r_{i}^{2}\right\vert ^{2}%
+\left\vert Hess\left(  u_{i}\right)  -g_{i}\right\vert ^{2})<\Phi\left(
\frac{1}{R},\frac{1}{i}|n\right)  ,\\
\left\vert u_{i}-\frac{1}{2}r_{i}^{2}\right\vert <\Phi\left(  \frac{1}%
{R},\frac{1}{i}|n\right)  \text{ }\mathrm{in}\text{ }B\left(  p_{i},4R\right)
,
\end{array}
\right.  \label{p7}%
\end{equation}
and
\begin{equation}
\left\vert \nabla u_{i}\right\vert \leq C\left(  n\right)  r_{i} \label{p8}%
\end{equation}
in $B\left(  p_{i},4R\right)  $ for sufficiently large $i$.

The proof of \textbf{Claim 5.1.} could be described briefly as follows. We
observe that%
\begin{equation}
\nabla u_{i}=\overline{\varphi}^{\prime}\left(  R^{2}\rho_{i}\right)
\nabla\rho_{i} \label{p4}%
\end{equation}
and%
\begin{equation}
Hess\left(  u_{i}\right)  =R^{2}\overline{\varphi}^{\prime\prime}\left(
R^{2}\rho_{i}\right)  \nabla\rho_{i}\otimes\nabla\rho_{i}+\overline{\varphi
}^{\prime}\left(  R^{2}\rho_{i}\right)  Hess\left(  \rho_{i}\right)  .
\label{p5}%
\end{equation}

From the definition of $\overline{\varphi}$ and the second inequality in
$\left(  \ref{p1}\right)  $, we know $u_{i}=\rho_{i}$. By $\left(
\ref{p1}\right)  $, we have%
\[%
%TCIMACRO{\dint \limits_{B\left(  p_{i},4R\right)  \backslash B\left(
%p_{i},\frac{2}{R}\right)  }}%
%BeginExpansion
{\displaystyle\int\limits_{B\left(  p_{i},4R\right)  \backslash B\left(
p_{i},\frac{2}{R}\right)  }}
%EndExpansion
\left(  \left\vert \nabla u_{i}-\frac{1}{2}\nabla r_{i}^{2}\right\vert
^{2}+\left\vert Hess\left(  u_{i}\right)  -g_{i}\right\vert ^{2}\right)
<\Phi\left(  \frac{1}{R},\frac{1}{i}|n\right)  .
\]

By (\ref{p3}), (\ref{p6}), and (\ref{p4}), we obtain%
\[%
%TCIMACRO{\dint \limits_{B\left(  p_{i},\frac{2}{R}\right)  }}%
%BeginExpansion
{\displaystyle\int\limits_{B\left(  p_{i},\frac{2}{R}\right)  }}
%EndExpansion
\left\vert \nabla u_{i}-\frac{1}{2}\nabla r_{i}^{2}\right\vert ^{2}%
<\Phi\left(  \frac{1}{R},\frac{1}{i}|n\right)  .
\]

From the inequalities (\ref{p5}), (\ref{p3}), (\ref{p6}), and (\ref{p1}), it
could be derived that%
\[%
\begin{array}
[c]{cl}
&
%TCIMACRO{\dint \limits_{B\left(  p_{i},\frac{2}{R}\right)  }}%
%BeginExpansion
{\displaystyle\int\limits_{B\left(  p_{i},\frac{2}{R}\right)  }}
%EndExpansion
\left\vert Hess\left(  u_{i}\right)  -g_{i}\right\vert ^{2}\\
\leq & 2%
%TCIMACRO{\dint \limits_{B\left(  p_{i},\frac{2}{R}\right)  \backslash B\left(
%p_{i},\frac{1}{5R}\right)  }}%
%BeginExpansion
{\displaystyle\int\limits_{B\left(  p_{i},\frac{2}{R}\right)  \backslash
B\left(  p_{i},\frac{1}{5R}\right)  }}
%EndExpansion
\left\vert Hess\left(  u_{i}\right)  \right\vert ^{2}+\Phi\left(  \frac{1}%
{R}|n\right) \\
\leq & C(n)%
%TCIMACRO{\dint \limits_{B\left(  p_{i},\frac{2}{R}\right)  \backslash B\left(
%p_{i},\frac{1}{5R}\right)  }}%
%BeginExpansion
{\displaystyle\int\limits_{B\left(  p_{i},\frac{2}{R}\right)  \backslash
B\left(  p_{i},\frac{1}{5R}\right)  }}
%EndExpansion
\left\vert Hess\left(  \rho_{i}\right)  \right\vert ^{2}+\Phi\left(  \frac
{1}{R},\frac{1}{i}|n\right) \\
\leq & C(n)%
%TCIMACRO{\dint \limits_{B\left(  p_{i},\frac{2}{R}\right)  \backslash B\left(
%p_{i},\frac{1}{5R}\right)  }}%
%BeginExpansion
{\displaystyle\int\limits_{B\left(  p_{i},\frac{2}{R}\right)  \backslash
B\left(  p_{i},\frac{1}{5R}\right)  }}
%EndExpansion
\left\vert Hess\left(  \rho_{i}\right)  -g_{i}\right\vert ^{2}+\Phi\left(
\frac{1}{R},\frac{1}{i}|n\right) \\
< & \Phi\left(  \frac{1}{R},\frac{1}{i}|n\right)  .
\end{array}
\]

Hence the first inequality in (\ref{p7}) is deduced. As for the second
inequality in (\ref{p7}) and the estimate (\ref{p8}), they could be also
observed from (\ref{p1}), (\ref{p6}), (\ref{p4}), and (\ref{p3}). So
\textbf{Claim 5.1.} is accomplished.

Consider a smooth function $\varphi$ from $\left[  0,+\infty\right)  $ to
$\left[  0,+\infty\right)  $ with compact support such that%
\[
\varphi\left(  t\right)  =\left\{
\begin{array}
[c]{cl}%
t & \mathrm{on}\text{ }\left[  0,1\right]  ,\\
0 & \mathrm{on}\text{ }\left[  2,+\infty\right)  ,
\end{array}
\right.
\]
and $\left\vert \varphi\right\vert ,\left\vert \varphi^{\prime}\right\vert
,\left\vert \varphi^{\prime\prime}\right\vert \leq C\left(  n\right)  $. Let
\[
v_{i}\left(  y\right)  =3R^{2}\varphi\left(  \frac{u_{i}\left(  y\right)
}{3R^{2}}\right)
\]
and%
\[
v_{i}\left(  z,t\right)  =v_{i,t}\left(  z\right)  =%
%TCIMACRO{\dint \limits_{M}}%
%BeginExpansion
{\displaystyle\int\limits_{M}}
%EndExpansion
H_{i}\left(  z,y,t\right)  v_{i}\left(  y\right)  ,
\]
where $H_{i}\left(  z,y,t\right)  $ is the heat kernel of $\left(  M_{i}%
,g_{i}\right)  $. It is clear that the support of $v_{i}$ is contained in
$B\left(  p_{i},4R\right)  $. By the similar argument as in Theorem \ref{t2},
it can be showed that

\textbf{Claim 5.2 :}
\[
\left(  v_{i,1}\left(  z\right)  \right)  _{\alpha\overline{\beta}}\geq
c\left(  n,v\right)  \left(  g_{i}\right)  _{\alpha\overline{\beta}}>0
\]
on $B\left(  p_{i},\frac{R}{10}\right)  $ for sufficiently large $R$ and $i$.

Define $q_{i}\left(  x\right)  $ to be%
\[
4n\left[  \log\left(
%TCIMACRO{\dsum \limits_{s\in I_{n}}}%
%BeginExpansion
{\displaystyle\sum\limits_{s\in I_{n}}}
%EndExpansion
\left\vert w_{s}^{i}\left(  y\right)  \right\vert ^{2}\right)  \lambda\left(
4\frac{%
%TCIMACRO{\dsum \limits_{s\in I_{n}}}%
%BeginExpansion
{\displaystyle\sum\limits_{s\in I_{n}}}
%EndExpansion
\left\vert w_{s}^{i}\left(  y\right)  \right\vert ^{2}}{\delta^{2}}\right)
+\log\left(
%TCIMACRO{\dsum \limits_{s\in I_{n}}}%
%BeginExpansion
{\displaystyle\sum\limits_{s\in I_{n}}}
%EndExpansion
\left\vert v_{s}^{i}\left(  y\right)  \right\vert ^{2}\right)  \lambda\left(
4\frac{%
%TCIMACRO{\dsum \limits_{s\in I_{n}}}%
%BeginExpansion
{\displaystyle\sum\limits_{s\in I_{n}}}
%EndExpansion
\left\vert v_{s}^{i}\left(  y\right)  \right\vert ^{2}}{\delta^{2}}\right)
\right]  ,
\]
where $\lambda$ is a cut-off function from $\left[  0,+\infty\right)  $ to
$\left[  0,+\infty\right)  $ with compact support and
\[
\lambda\left(  t\right)  =\left\{
\begin{array}
[c]{cl}%
1 & \mathrm{on}\text{ }\left[  0,1\right]  ,\\
0 & \mathrm{on}\text{ }\left[  2,+\infty\right)  .
\end{array}
\right.
\]

It is not difficult to see that supp$\left(  q_{i}\left(  x\right)  \right)
\subseteq B(y_{i},\delta)\cup B(z_{i},\delta)\subset B\left(  p_{i},2\right)
$. And we can show that

\textbf{Claim 5.3 :}
\[
i\partial_{B}\overline{\partial}_{B}q_{i}\geq-C\left(  n,\delta\right)
d\theta_{i}.
\]
Furthermore, $\exp\left(  -q_{i}\right)  $ is not locally integrable at
$y_{i}$ and $z_{i}$. Note that $\left(  q_{i}\right)  _{0}=0$. \

Now we give the proof of \textbf{Claim 5.3. }with the help of the following
three estimates%
\[
\left\{
\begin{array}
[c]{l}%
\left\vert i\partial_{B}\overline{\partial}_{B}\left\vert w_{s}^{i}\right\vert
^{2}\right\vert \leq\left\vert \partial_{B}w_{s}^{i}\wedge\overline
{\partial_{B}w_{s}^{i}}\right\vert \leq\left\vert \nabla w_{s}^{i}\right\vert
^{2}\leq C\left(  n\right)  \text{ }\mathrm{in}\text{ }B(y_{i},\delta),\\%
%TCIMACRO{\dsum \limits_{s\in I_{n}}}%
%BeginExpansion
{\displaystyle\sum\limits_{s\in I_{n}}}
%EndExpansion
\left\vert w_{s}^{i}\left(  y\right)  \right\vert ^{2}\in\left[  \frac{1}%
{4}\delta^{2},\frac{1}{2}\delta^{2}\right]  \text{ }\mathrm{if}\text{ }%
y\in\left[  \lambda^{\prime}\left(  \frac{4}{\delta^{2}}%
%TCIMACRO{\dsum \limits_{s\in I_{n}}}%
%BeginExpansion
{\displaystyle\sum\limits_{s\in I_{n}}}
%EndExpansion
\left\vert w_{s}^{i}\right\vert ^{2}\right)  \neq0\right]  ,\\
i\partial_{B}\overline{\partial}_{B}\log\left(
%TCIMACRO{\dsum \limits_{s\in I_{n}}}%
%BeginExpansion
{\displaystyle\sum\limits_{s\in I_{n}}}
%EndExpansion
\left\vert w_{s}^{i}\right\vert ^{2}\right)  \geq0\text{ }\mathrm{in}\text{
}\mathrm{the}\text{ }\mathrm{current}\text{ }\mathrm{sense.}%
\end{array}
\right.
\]
The first part is completed by the routine calculation. As for the second one,
by the facts that, in $B(y_{i},\frac{\delta}{10})$,
\[
\exp\left(  -q_{i}\right)  =\frac{1}{\left(
%TCIMACRO{\dsum \limits_{s\in I_{n}}}%
%BeginExpansion
{\displaystyle\sum\limits_{s\in I_{n}}}
%EndExpansion
\left\vert w_{s}^{i}\left(  y\right)  \right\vert ^{2}\right)  ^{4n}}%
\]
and%
\[
w_{s}^{i}\left(  y_{i}\right)  =0,
\]
for any $s\in I_{n}$, it enables us to see that $\exp\left(  -q_{i}\right)  $
is not locally integrable at $y_{i}$. By the similar argument, we also have
that $\exp\left(  -q_{i}\right)  $ is not locally integrable at $z_{i}$. This
implies \textbf{Claim 5.3.}

We can observe that, in $B\left(  p_{i},\frac{R}{15}\right)  $,
\[
i\partial_{B}\overline{\partial}_{B}\left(  q_{i}+C\left(  n,v,\delta\right)
v_{i,1}\right)  \geq d\theta_{i}%
\]
for some positive constant $C\left(  n,v,\delta\right)  $. Set
\[
\psi_{i}=q_{i}+C\left(  n,v,\delta\right)  v_{i,1}.
\]
As in Theorem \ref{t2}, there exists a sufficiently small number $\epsilon
_{0}=\epsilon_{0}\left(  n,v\right)  >0$ such that
\begin{equation}
4\sup_{B\left(  p_{i},\epsilon_{0}\frac{R}{20}\right)  }v_{i,1}<\inf_{\partial
B\left(  p_{i},\frac{R}{20}\right)  }v_{i,1}, \label{p9}%
\end{equation}
for sufficiently large $R$. From now on, we freeze the value of $R=R\left(
n,v\right)  $ such that $R$ satisfies \textbf{Claim 5.2.,} (\ref{p9}), and
$\epsilon_{0}\frac{R}{20}>4$. Let $\Omega_{i}$ be the connected component of
\[
\left\{  y\in B\left(  p_{i},\frac{R}{20}\right)  |v_{i,1}\left(  y\right)
<2\sup_{B\left(  p_{i},\epsilon_{0}\frac{R}{20}\right)  }v_{i,1}\right\}
\]
which contains $B\left(  p_{i},\epsilon_{0}\frac{R}{20}\right)  $. It is clear
that $\Omega_{i}$ is relatively compact in $B\left(  p_{i},\frac{R}%
{20}\right)  $ and $\left(  v_{i,1}\left(  z\right)  \right)  _{\alpha
\overline{\beta}}\geq c\left(  n,v\right)  \left(  g_{i}\right)
_{\alpha\overline{\beta}}>0$ on $\Omega_{i}$. Consider a smooth function
$f_{i}$ with compact support, $Tf_{i}=0$,
\[
f_{i}=\left\{
\begin{array}
[c]{cl}%
1 & \mathrm{on}\text{ }B(y_{i},\frac{\delta}{4}),\\
0 & \mathrm{on}\text{ }B(y_{i},\delta)^{c},
\end{array}
\right.
\]
and
\[
\left\vert \nabla f_{i}\right\vert \leq C\left(  n,v,\delta\right)  .
\]
By CR H\"{o}rmander $L^{2}$-estimate, we solve the equation%
\[
\overline{\partial}_{B}h_{i}=\overline{\partial}_{B}f_{i}%
\]
in $\Omega_{i}$ with
\begin{equation}
\left\Vert h_{i}\right\Vert _{L^{2}\left(  \Omega_{i},e^{-\psi_{i}}d\mu
_{i}\right)  }\leq\left\Vert \overline{\partial}_{B}f_{i}\right\Vert
_{L^{2}\left(  \Omega_{i},e^{-\psi_{i}}d\mu_{i}\right)  }\leq C\left(
n,v,\delta\right)  . \label{p10}%
\end{equation}
Because $\exp\left(  -q_{i}\right)  $ is not locally integrable at $y_{i}$ and
$z_{i}$, we see that $h_{i}\left(  y_{i}\right)  =0=h_{i}\left(  z_{i}\right)
$. Let $\mu_{i}=f_{i}-h_{i}$. Hence we have $\mu_{i}\left(  y_{i}\right)  =1$
and $\mu_{i}\left(  z_{i}\right)  =0$. It means that $\mu_{i}$ is not constant
on $\Omega_{i}$. By definition, we could see that
\begin{equation}
\psi_{i}\leq C\left(  n,v,\delta\right)  \label{p11}%
\end{equation}
in $B\left(  p_{i},3\right)  $. From the definition of $f_{i}$, (\ref{p11}),
(\ref{p10}), the mean value inequality gives us that
\[
\left\vert \mu_{i}\right\vert \leq C\left(  n,v,\delta\right)
\]
in $B\left(  p_{i},2\right)  $. More precisely, we have
\[%
\begin{array}
[c]{cl}
&
%TCIMACRO{\dint \limits_{B\left(  p_{i},3\right)  }}%
%BeginExpansion
{\displaystyle\int\limits_{B\left(  p_{i},3\right)  }}
%EndExpansion
\left\vert \mu_{i}\right\vert ^{2}\\
\leq & 2%
%TCIMACRO{\dint \limits_{B\left(  p_{i},3\right)  }}%
%BeginExpansion
{\displaystyle\int\limits_{B\left(  p_{i},3\right)  }}
%EndExpansion
\left(  \left\vert f_{i}\right\vert ^{2}+\left\vert h_{i}\right\vert
^{2}\right) \\
\leq & C\left(  n,v,\delta\right)  +%
%TCIMACRO{\dint \limits_{B\left(  p_{i},3\right)  }}%
%BeginExpansion
{\displaystyle\int\limits_{B\left(  p_{i},3\right)  }}
%EndExpansion
\left\vert h_{i}\right\vert ^{2}\\
\leq & C\left(  n,v,\delta\right)  +C\left(  n,v,\delta\right)  \left\Vert
h_{i}\right\Vert _{L^{2}\left(  \Omega_{i},e^{-\psi_{i}}d\mu_{i}\right)  }\\
\leq & C\left(  n,v,\delta\right)  .
\end{array}
\]
Therefore, the CR-holomorphic function $v_{i}^{\ast}=$ $\mu_{i}-\mu_{i}\left(
p_{i}\right)  $ on $B\left(  p_{i},2\right)  $ is uniformly bounded. Let
\[
M_{i}^{\prime}\left(  r\right)  =\sup_{B\left(  p_{i},r\right)  }\left\vert
v_{i}^{\ast}\right\vert .
\]
Then we have $M_{i}^{\prime}\left(  2\right)  \leq C\left(  n,v,\delta\right)
$. By $\mu_{i}\left(  y_{i}\right)  =1$ and $\mu_{i}\left(  z_{i}\right)  =0$,
we obtain $M_{i}^{\prime}\left(  1\right)  \geq\frac{1}{2}$. So
\begin{equation}
\frac{M_{i}^{\prime}\left(  2\right)  }{M_{i}^{\prime}\left(  1\right)  }\leq
C\left(  n,v,\delta\right)  . \label{p12}%
\end{equation}

We consider the rescale function $\mu_{i}^{\ast}=\alpha_{i}v_{i}^{\ast}$ in
$B\left(  p,2s_{i}\right)  $ so that
\[
\left\Vert \mu_{i}^{\ast}\right\Vert _{L^{2}\left(  B\left(  p,2\right)
,d\mu_{i}\right)  }=1.
\]
This implies that, in $B\left(  p,1\right)  $,
\begin{equation}
\left\vert \mu_{i}^{\ast}\right\vert \leq C\left(  n,v\right)  . \label{p13}%
\end{equation}

If we define
\[
M_{i}\left(  r\right)  =\sup_{B\left(  p,r\right)  }\left\vert \mu_{i}^{\ast
}\right\vert ,
\]
then, by the CR three-circle theorem (\ref{314}), we have $\frac{M_{i}\left(
2r\right)  }{M_{i}\left(  r\right)  }$ is monotonic increasing for $0<r\leq
s_{i}.$ This and (\ref{p12}) will imply
\begin{equation}
\frac{M_{i}\left(  2r\right)  }{M_{i}\left(  r\right)  }\leq C\left(
n,v,\delta\right)  \label{p14}%
\end{equation}
for $r\in\left(  0,s_{i}\right]  $. Furthermore, it follows from (\ref{p14})
that%
\[
\left\{
\begin{array}
[c]{l}%
\ln M_{i}\left(  2^{k}\right)  -\ln M_{i}\left(  2^{k-1}\right)  \leq C\left(
n,v,\delta\right)  ,\\
\multicolumn{1}{c}{\vdots}\\
\ln M_{i}\left(  2\right)  -\ln M_{i}\left(  1\right)  \leq C\left(
n,v,\delta\right)  ,
\end{array}
\right.
\]
where $k$ is chosen as $\left[  \log_{2}r\right]  +1$. Then, with the help of
(\ref{p13}), we obtain%
\[
\ln M_{i}\left(  r\right)  \leq\ln M_{i}\left(  2^{k}\right)  \leq C\left(
n,v,\delta\right)  \left(  k+1\right)  .
\]
From the last inequality, it is easy to deduce, for all $i$
\begin{equation}
M_{i}\left(  r\right)  \leq C\left(  n,v,\delta\right)  \left(  r^{C\left(
n,v,\delta\right)  }+1\right)  , \label{p15}%
\end{equation}
for $s_{i}\geq r$. Passing to a subsequence $i\rightarrow\infty$, $\mu
_{i}^{\ast}$ converges uniformly on each compact set to a nonconstant
CR-holomorphic function $\mu$ of polynomial growth with $\mu(p)=0$ and
\[%
%TCIMACRO{\dint \limits_{B\left(  p,2\right)  }}%
%BeginExpansion
{\displaystyle\int\limits_{B\left(  p,2\right)  }}
%EndExpansion
\left\vert \mu\right\vert ^{2}=1.
\]
This completes the proof of Theorem \ref{T1}.
\end{proof}

\appendix

\section{ CR analogue of H\"{o}rmander L$^{2}$-estimate {}}

Let $\phi$ be an $\left(  p,q\right)  $-form and denoted by
\[
\phi=\phi_{\alpha_{1}\alpha_{2}...\alpha_{p}\bar{\beta}_{1}\bar{\beta}%
_{2}...\bar{\beta}_{q}}\theta^{\alpha_{1}}\wedge...\wedge\theta^{\alpha_{p}%
}\wedge\theta^{\bar{\beta}_{1}}\wedge\theta^{\bar{\beta}_{2}}\wedge
...\wedge\theta^{\bar{\beta}_{q}}.
\]
For abbreviation, we denote as $\phi=\phi_{A\bar{B}}\theta^{A}\wedge
\theta^{\bar{B}}$, where $A$ and $\overline{B}$ are multiple index $A=\left(
\alpha_{1},\alpha_{2},...,\alpha_{p}\right)  $ and $\overline{B}=\left(
\bar{\beta}_{1},\bar{\beta}_{2},...,\bar{\beta}_{q}\right)  $ respectively. By
taking exterior derivative, we obtain%
\[
d\phi=d_{b}\phi+\mathbf{T}\phi:=\partial_{b}\phi+\bar{\partial}_{b}%
\phi+\mathbf{T}\phi,
\]
where $\partial_{b}\phi$ is the $\left(  p+1,q\right)  $-form part of
$d_{b}\phi$ \ and $\bar{\partial}_{b}\phi$ is the $\left(  p,q+1\right)
$-form part of $d_{b}\phi$ , $\mathbf{T}\phi$ is the form spanned by basis
$\theta\wedge\theta^{A}\wedge\theta^{\bar{B}}$. Note that
\[
\overline{\partial}_{b}^{2}=0.
\]

Before we go further, let us first recall the notion as in \cite{fow} for a
Sasakain $(2n+1)$-manifold. Let $\{U_{\alpha}\}_{\alpha\in A}$ be an open
covering of the Sasakian manifold $M^{2n+1}$with the adapted metric
$g^{\lambda}$ and $\pi_{\alpha}:$ $U_{\alpha}\rightarrow V_{\alpha}\subset%
%TCIMACRO{\U{2102} }%
%BeginExpansion
\mathbb{C}
%EndExpansion
^{n}$ submersion such that $\pi_{\alpha}\circ\pi_{\beta}^{-1}:\pi_{\beta
}(U_{\alpha}\cap U_{\beta})\rightarrow\pi_{\alpha}(U_{\alpha}\cap U_{\beta})$
is biholomorphic. On each $V_{\alpha},$ there is a canonical isomorphism
\[
d\pi_{\alpha}:D_{p}\rightarrow T_{\pi_{\alpha}(p)}V_{\alpha}%
\]
for any $p\in U_{\alpha},$ where $D=\ker\theta\subset TM.$ Since $\mathbf{T}$
generates isometries, the restriction of the Sasakian metric $g$ to $D$ gives
a well-defined Hermitian metric $g_{\alpha}^{T}$ on $V_{\alpha}.$ This
Hermitian metric in fact is K\"{a}hler. More precisely, let $(z^{1}%
,z^{2},\cdot\cdot\cdot,z^{n})$ be the local holomorphic coordinates on
$V_{\alpha}$. We pull back these to $U_{\alpha}$ and still write the same. Let
$\widetilde{t}$ be the coordinate along the leaves with $\mathbf{T}%
=\frac{\partial}{\partial\widetilde{t}}.$ Then we have the local coordinate
$x=(\widetilde{x},\widetilde{t})=(z^{1},z^{2},\cdot\cdot\cdot,z^{n}%
,\widetilde{t})$ on $U_{\alpha}\ $and $(D\otimes%
%TCIMACRO{\U{2102} }%
%BeginExpansion
\mathbb{C}
%EndExpansion
)^{1,0}$ is spanned by the form
\begin{equation}
Z_{\alpha}=(\frac{\partial}{\partial z^{\alpha}}-\theta(\frac{\partial
}{\partial z^{\alpha}})T),\ \ \ \alpha=1,2,\cdot\cdot\cdot,n. \label{2018}%
\end{equation}
Since $i(T)d\theta=0,$
\begin{equation}
d\theta(Z_{\alpha},\overline{Z_{\beta}})=d\theta(\frac{\partial}{\partial
z^{\alpha}},\frac{\partial}{\partial\overline{z}^{\beta}}). \label{2018b}%
\end{equation}
Then the K\"{a}hler $2$-form $\omega_{\alpha}^{T}$ of the Hermitian metric
$g_{\alpha}^{T}$ on $V_{\alpha},$ which is the same as the restriction of the
Levi form $\frac{1}{2}d\theta$ to $\widetilde{D_{\alpha}}$, the slice
$\{\widetilde{t}=$ \textrm{constant}$\}$ in $U_{\alpha},$ is closed. The
collection of K\"{a}hler metrics $\{g_{\alpha}^{T}\}$ on $\{V_{\alpha}\}$ is
so-called a transverse K\"{a}hler metric. We often refer to $\frac{1}%
{2}d\theta$ on $\widetilde{D}$ as the K\"{a}hler form of the transverse
K\"{a}hler metric $g^{T}$ in the leaf space $V.$ Furthermore, Being an
Sasakian manifold, one observes that the relation between transverse
K\"{a}hler Ricci curvature and pseudohermitian Ricci curvature
\begin{equation}
Ric_{g_{\alpha}^{T}}(X,Y)=Ric(\widetilde{X},\widetilde{Y})+2g_{\alpha}%
^{T}(X,Y), \label{2018c}%
\end{equation}
where $X,$ $Y$ are vector fields on the local leaf spaces $V_{\alpha}$ and
$\widetilde{X},$ $\widetilde{Y}$ are lifts to $D$.

As an example, $\left\{  Z_{\alpha}=\frac{\partial}{\partial z^{\alpha}%
}+i\overline{z}^{\alpha}\frac{\partial}{\partial t}\right\}  _{\alpha=1}^{n}$
is exactly a local frame on in the $(2n+1)$-dimensional Heisenberg group
$\mathbf{H}_{n}$ $=$ $%
%TCIMACRO{\U{2102} }%
%BeginExpansion
\mathbb{C}
%EndExpansion
^{n}\times%
%TCIMACRO{\U{211d} }%
%BeginExpansion
\mathbb{R}
%EndExpansion
$ with the local coordinate $(z,t)$. Here
\[
\theta=dt+i%
%TCIMACRO{\dsum \limits_{\alpha\in I_{n}}}%
%BeginExpansion
{\displaystyle\sum\limits_{\alpha\in I_{n}}}
%EndExpansion
\left(  z^{\alpha}d\overline{z}^{\alpha}-\overline{z}^{\alpha}dz^{\alpha
}\right)
\]
is a pseudohermitian contact structure on $\mathbf{H}_{n}$ and $\mathbf{T}%
=\frac{\partial}{\partial\widetilde{t}}$.

\begin{definition}
Let $\left(  M,J,\theta\right)  $ be a complete noncompact Sasakian
$(2n+1)$-manifold. Define

(i) A $p$-form $\gamma$ is called basic if%
\[
i(\mathbf{T})\gamma=0
\]
and
\[
L_{\mathbf{T}}\gamma=0
\]

(ii) Let $\Lambda_{B}^{p}$ be the sheaf of germs of basic $p$-forms and
\ $\Omega_{B}^{p}$ be the set of all global sections of $\Lambda_{B}^{p}$.
\end{definition}

It is easy to check that $d\gamma$ is basic if $\gamma$ is basic. Set
$d_{B}=d|_{\Omega_{B}^{p}}.$ Then%
\[
d_{B}:\Omega_{B}^{p}\rightarrow\Omega_{B}^{p+1}%
\]
and the corresponding complex $(\Omega_{B}^{\ast},d_{B})$ is called the basic
de Rham complex. The basic form of type $(p,q)$-form with respect to
$(z^{1},z^{2},\cdot\cdot\cdot,z^{n},\widetilde{t}_{1})$ in $U_{\alpha}$
\[
\gamma=\gamma_{i_{1}...i_{p}\overline{j}_{1}...\overline{j}_{q}}dz^{i_{1}%
}\wedge...\wedge dz^{i_{p}}\wedge d\overline{z}^{j_{1}}\wedge...\wedge
d\overline{z}^{j_{q}}%
\]
is also of type $(p,q)$-form with respect to $(w^{1},w^{2},\cdot\cdot
\cdot,w^{n},\widetilde{t}_{2})$ in $U_{\beta}$ and does not depend on
$\widetilde{t}.$ We then have the well-defined operators%
\[
\partial_{B}:\Lambda_{B}^{p,q}\rightarrow\Lambda_{B}^{p+1,q}%
\]
and
\[
\overline{\partial}_{B}:\Lambda_{B}^{p,q}\rightarrow\Lambda_{B}^{p,q+1}.
\]
Now for
\[
d_{B}=\overline{\partial}_{B}+\partial_{B}\text{ \textrm{and }\ }d_{B}%
^{c}=\frac{i}{2}(\overline{\partial}_{B}-\partial_{B}),
\]
we have
\[
d_{B}d_{B}^{c}=i\partial_{B}\overline{\partial}_{B}\text{ \ \ \ \textrm{and}
\ \ }d_{B}^{2}=0=(d_{B}^{c})^{2}.
\]
We also define the basic Laplacian
\[
\Delta_{B}=d_{B}^{\ast}d_{B}+d_{B}d_{B}^{\ast}.
\]
Here $d_{B}^{\ast}$ $:\Omega_{B}^{p+1}\rightarrow\Omega_{B}^{p}$ is the
adjoint operator of $d_{B}.$ As a result of El Kacimi-Alaoui \cite{eka}, one
has the expected isomorphisms between basic cohomology groups and the space of
basic harmonic forms. Moreover, we can work out the basic Hodge theory in the
local leaf space $V$ together with the transverse K\"{a}hler structure and the
transverse K\"{a}hler metric $g^{T}$.

More precisely, we consider a CR-holomorphic vector bundle $(E,\overline
{\partial}_{b})$ over a strictly pseudoconvex CR $(2n+1)$-manifold
$(M,T^{1,0}(M),\theta)$ and a unique Tanaka connection $D$ on $E$ due to N.
Tanaka (\cite{ta}). Define the global $L^{2}$-norm for any $L^{2}$ section $s$
of $E$
\[
\int_{M}||s(x)||^{2}d\mu
\]
where $||s(x)||^{2}=\left\langle s(x),s(x)\right\rangle _{L_{\theta}}$ is the
pointwise Hermitian norm and $d\mu=\theta\wedge(d\theta)^{n}$. The sub-Laplace
Beltrami operator associated to this connection is defined by
\[
\Delta=D^{\ast}D+DD^{\ast}%
\]
where $D^{\ast}$ is the adjoint of $D$ with respect to the $L^{2}$-norm as
above. The Tanaka connection
\[
D=D^{1,0}+D^{0,1}%
\]
have decomposition with
\[
D^{0,1}=\overline{\partial}_{b}.
\]
We define the complex sub-Laplace operators
\[
\Delta^{\prime}=(D^{\prime})^{\ast}D^{\prime}+D^{\prime}(D^{\prime})^{\ast}%
\]
with $D^{\prime}=D^{1,0}$ and
\[
\Delta_{\overline{\partial}_{b}}=\overline{\partial}_{b}^{\ast}\overline
{\partial}_{b}+\overline{\partial}_{b}\overline{\partial}_{b}^{\ast}.
\]
Now we will work on an noncompact Sasakian manifold $M$. In fact we consider
all operators on the sheaf of germs of basic $p$-forms $\Lambda_{B}^{p}$ and
\ the set of all global sections $\Omega_{B}^{p}$ of $\Lambda_{B}^{p}$ with
the complex basic sub-Laplacian
\[
\Delta_{\overline{\partial}_{B}}=\overline{\partial}_{B}^{\ast}\overline
{\partial}_{B}+\overline{\partial}_{B}\overline{\partial}_{B}^{\ast}.
\]

\begin{lemma}
(\textbf{CR Bochner-Kodaira-Nakano identity}) Let $(M,T^{1,0}(M),\theta)$ be a
strictly pseudoconvex Sasakian $(2n+1)$-manifold and $D$ be the Tanaka
connection in a CR-holomorphic vector bundle $E$ over $M$. Then the complex
sub-Laplace operators $\Delta^{\prime}$ and $\Delta_{\overline{\partial}_{B}}$
acting on $E$-valued basic forms satisfy the identity%
\begin{equation}
\Delta_{\overline{\partial}_{B}}=\Delta^{\prime}+[i\Theta(E),\Lambda],
\label{2020AA}%
\end{equation}
where $\Delta_{\overline{\partial}_{B}}=\overline{\partial}_{B}^{\ast
}\overline{\partial}_{B}+\overline{\partial}_{B}\overline{\partial}_{B}^{\ast
}.$ Here $\Theta(E):=D^{2}s$ is the curvature operator.
\end{lemma}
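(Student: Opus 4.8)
The plan is to reduce the identity to the classical Bochner--Kodaira--Nakano identity on the transverse Kähler leaf spaces. As recalled in the discussion preceding the statement, on each chart $U_{\alpha}$ a basic $(p,q)$-form descends to an ordinary $(p,q)$-form on the leaf space $V_{\alpha}$ equipped with the transverse Kähler metric $g^{T}$, whose Kähler form is the restriction of $\frac{1}{2}d\theta$ to the slice $\widetilde{D_{\alpha}}$. Under this correspondence $\overline{\partial}_{B}$ and $\partial_{B}$ become the ordinary $\overline{\partial}$ and $\partial$ on $V_{\alpha}$, the Lefschetz operator and its adjoint $\Lambda$ are the transverse ones, and, because $E$ is CR-holomorphic, the Tanaka connection $D$ restricted to basic sections coincides with the Chern connection of the induced transverse holomorphic Hermitian structure, so that $D'=D^{1,0}$ and $D^{0,1}=\overline{\partial}_{B}$. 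Consequently both sides of (\ref{2020AA}) are the leaf-space $\overline{\partial}$- and $\partial$-Laplacians, and the assertion becomes the fibrewise classical statement.

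First I would record the transverse Kähler identities in the basic formalism. Since $(M,J,\theta)$ is Sasakian, the pseudohermitian torsion $A_{\alpha\beta}$ vanishes, hence $\frac{1}{2}d\theta$ is a genuine transverse Kähler form with $d_{B}\bigl(\tfrac{1}{2}d\theta\bigr)=0$; this is exactly the transverse Kähler condition that forces the Kähler identities to hold without correction. Concretely, on $E$-valued basic forms one obtains
\[
[\Lambda,\overline{\partial}_{B}]=-i\,(D')^{\ast}, \qquad [\Lambda,D']=i\,\overline{\partial}_{B}^{\ast},
\]
the adjoints being taken in the $L^{2}$-inner product induced by $g^{T}$, equivalently by the Levi form on $\widetilde{D_{\alpha}}$. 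The vanishing of $A_{\alpha\beta}$ is the essential Sasakian input: in a merely pseudohermitian manifold the commutators would acquire torsion terms and terms differentiating in the $\mathbf{T}$-direction, but on basic forms (where $i(\mathbf{T})\gamma=0$ and $L_{\mathbf{T}}\gamma=0$) together with $A=0$ these all drop out.

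With these identities in hand, the commutators assemble in the standard way. Writing $\overline{\partial}_{B}^{\ast}=-i[\Lambda,D']$ and $(D')^{\ast}=i[\Lambda,\overline{\partial}_{B}]$ and substituting into $\Delta_{\overline{\partial}_{B}}=\overline{\partial}_{B}\overline{\partial}_{B}^{\ast}+\overline{\partial}_{B}^{\ast}\overline{\partial}_{B}$ and $\Delta'=(D')^{\ast}D'+D'(D')^{\ast}$, one expands the eight resulting terms. Four of them cancel in pairs, and the remaining four regroup as $-\bigl(\overline{\partial}_{B}D'+D'\overline{\partial}_{B}\bigr)\Lambda+\Lambda\bigl(D'\overline{\partial}_{B}+\overline{\partial}_{B}D'\bigr)$. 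Here one uses the curvature relation
\[
D'\,\overline{\partial}_{B}+\overline{\partial}_{B}\,D'=\Theta(E),
\]
the type-$(1,1)$ part of $D^{2}=\Theta(E)$, with $D'^{2}$ and $\overline{\partial}_{B}^{2}$ vanishing for the Tanaka/Chern connection. The residual terms collapse to $\Lambda\Theta(E)-\Theta(E)\Lambda$, and after multiplication by the overall factor this is precisely $[i\Theta(E),\Lambda]$, giving $\Delta_{\overline{\partial}_{B}}=\Delta'+[i\Theta(E),\Lambda]$.

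The main obstacle is not the commutator algebra, which is formal once the Kähler identities are available, but the rigorous justification of the leaf-space reduction. One must verify that the basic adjoints $\overline{\partial}_{B}^{\ast}$ and $(D')^{\ast}$, defined through the global volume form $d\mu=\theta\wedge(d\theta)^{n}$ on $M$, agree with the transverse adjoints computed against $(\omega^{T})^{n}$ on $V_{\alpha}$; since the metric is $\mathbf{T}$-invariant the two volume forms differ only by the $\theta$-factor along the leaves, so this reduction is clean. One must likewise confirm that $D$ genuinely restricts to the transverse Chern connection, which follows from the compatibility of $D$ with $J$ and $\overline{\partial}_{b}$ together with $A=0$. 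Once these identifications are in place, the identity is inherited from the transverse Kähler case.
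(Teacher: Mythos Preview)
Your proposal is correct and follows essentially the same route as the paper: both proofs invoke the transverse K\"ahler identities $[\Lambda,\overline{\partial}_{B}]=-i(D')^{\ast}$ and $[\Lambda,D']=i\,\overline{\partial}_{B}^{\ast}$ (valid on basic forms because the Sasakian torsion vanishes), together with the curvature relation $D'\overline{\partial}_{B}+\overline{\partial}_{B}D'=\Theta(E)$, to reduce the difference $\Delta_{\overline{\partial}_{B}}-\Delta'$ to $[i\Theta(E),\Lambda]$. The only cosmetic difference is that the paper organizes the commutator algebra via the graded Jacobi identity, writing $\Delta_{\overline{\partial}_{B}}=-i[\overline{\partial}_{B},[\Lambda,D']]$ and then expanding, whereas you substitute the K\"ahler identities into both Laplacians and expand the resulting eight terms directly; these are equivalent bookkeepings of the same computation.
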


\begin{proof}
Let $L$ be the Lefschetz operator defined by $Ls=\frac{1}{2}d\theta\wedge s$
and $\Lambda=L^{\ast}$ its adjoint operator. It follows from \cite{ti} (see
also \cite{de}) that the following transverse K\"{a}hler identities hold :
\[
\lbrack\overline{\partial}_{B}^{\ast},L]=iD^{\prime};\ \text{\ \ }[D^{\prime
}{}^{\ast},L]=-i\overline{\partial}_{B}%
\]
and
\[
\lbrack\Lambda,\overline{\partial}_{B}]=-iD^{\prime\ast};\text{\ \ }%
[\Lambda,D^{\prime}{}]=i\overline{\partial}_{B}^{\ast}.
\]
Hence we have
\[
\Delta_{\overline{\partial}_{B}}=[\overline{\partial}_{B},\overline{\partial
}_{B}^{\ast}]=-i[\overline{\partial}_{B},[\Lambda,D^{\prime}]].
\]
On the other hand, by the Jacobi identity we have
\[
\lbrack\overline{\partial}_{B},[\Lambda,D^{\prime}]]=[\Lambda,[D^{\prime
},\overline{\partial}_{B}]]+[D^{\prime},[\overline{\partial}_{B}%
,\Lambda]]=[\Lambda,\Theta(E)]+i[D^{\prime},D^{\prime\ast}],
\]
where we use $[D^{\prime},\overline{\partial}_{B}]=D^{2}=\Theta(E)$. Then we
derive the CR analogue of Bochner-Kodaira-Nakano identity.
\end{proof}

Then, based on this CR Bochner-Kodaira-Nakano identity (\ref{2020AA}), it is
straightforward from \cite{h} and \cite{de} that one can derive the CR
analogue of H\"{o}rmander $L^{2}$-estimate Proposition \ref{P21}.

\section{ Maximum Principle for the CR Heat Equation}

In order to prove Theorem \ref{t1}, we derive several results by applying the
similar method as in our previous papers (\cite{cchl}, \cite{chl1}).

\begin{lemma}
\label{l1} Let $\left(  M,J,\theta\right)  $ be a complete noncompact Sasakian
$\left(  2n+1\right)  $-manifold of nonnegative pseudohermitian Ricci
curvature with
\[
0\leq u\left(  x\right)  \leq\exp\left(  ar\left(  x\right)  +b\right)
\]
for some $a,b>0,$ $u\in C_{c}^{\infty}\left(  M\right)  .$ We denote
\[
v\left(  x,t\right)  =%
%TCIMACRO{\dint \limits_{M}}%
%BeginExpansion
{\displaystyle\int\limits_{M}}
%EndExpansion
H\left(  x,y,t\right)  u\left(  y\right)  d\mu\left(  y\right)
\]
on $M\times\left[  0,\infty\right)  $ with the CR heat kernel $H\left(
x,y,t\right)  $. Then for any positive numbers $\epsilon>0$ and $T>0$, there
exist $C_{1}\left(  n,\epsilon\right)  >0$ and $C_{2}\left(  n,a,b,\epsilon
,T\right)  >0$ such that
\[
C_{1}\left(  n,\epsilon\right)  \inf_{B\left(  x,\epsilon r\left(  x\right)
\right)  }u\leq v\left(  x,t\right)  \leq C_{2}\left(  n,a,b,\epsilon
,T\right)  +\sup_{B\left(  x,\epsilon r\left(  x\right)  \right)  }u.
\]
Here $\left(  x,t\right)  \in r^{-1}\left(  \left[  \sqrt{T},+\infty\right)
\right)  \times\left[  0,T\right]  $ and $r\left(  x\right)  =d_{cc}\left(
x,o\right)  $ is the Carnot-Carath\'{e}odory distance $d_{cc}$ between $x$ and
$o$ for some fixed point $o\in M$ and $B\left(  x,r\right)  $ is the ball with
respect to $d_{cc}$.
\end{lemma}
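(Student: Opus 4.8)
The plan is to derive both inequalities directly from the two‑sided Gaussian bounds for the CR heat kernel, playing the Gaussian decay of $H$ against the exponential growth of $u$. Since the pseudohermitian Ricci curvature is nonnegative, the Carnot--Carath\'eodory balls on $M$ enjoy volume doubling and a scale‑invariant Poincar\'e inequality, so the CR Li--Yau gradient estimate and the associated parabolic Harnack inequality (\cite{ccht}, \cite{cklt}) supply constants $C,c>0$ depending only on $n$ with
\[
\frac{c}{V(x,\sqrt t)}\,e^{-\frac{d(x,y)^{2}}{ct}}\ \le\ H(x,y,t)\ \le\ \frac{C}{V(x,\sqrt t)}\,e^{-\frac{d(x,y)^{2}}{Ct}},
\]
the lower bound being valid at least for $d(x,y)\le\sqrt t$, together with $\int_{M}H(x,y,t)\,d\mu(y)\le 1$; here $d=d_{cc}$ and $V(x,r)$ is the volume of the Carnot--Carath\'eodory ball $B(x,r)$. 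These are the only external inputs; the remainder is bookkeeping.

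For the upper bound I would split $v(x,t)=\int_{B(x,\epsilon r(x))}Hu\,d\mu+\int_{M\setminus B(x,\epsilon r(x))}Hu\,d\mu$. The first term is $\le\sup_{B(x,\epsilon r(x))}u\cdot\int_{M}H\,d\mu\le\sup_{B(x,\epsilon r(x))}u$. On the complement $d:=d(x,y)\ge\epsilon r(x)$, and the triangle inequality gives $u(y)\le e^{b}e^{ar(x)}e^{ad}$; using $d\ge\epsilon r(x)$ and $t\le T$ in one third of the quadratic and completing the square in another,
\[
-\tfrac{d^{2}}{Ct}+ad\ \le\ -\tfrac{d^{2}}{3Ct}-\tfrac{\epsilon^{2}r(x)^{2}}{3CT}+\tfrac{3a^{2}CT}{4},
\]
so
\[
\int_{M\setminus B(x,\epsilon r(x))}Hu\,d\mu\ \le\ Ce^{b}e^{\frac{3a^{2}CT}{4}}\,e^{\,ar(x)-\frac{\epsilon^{2}r(x)^{2}}{3CT}}\int_{M}\frac{e^{-\frac{d^{2}}{3Ct}}}{V(x,\sqrt t)}\,d\mu(y).
\]
The last integral is $\le C(n)$ by volume doubling, and $\sup_{s\ge0}\big(as-\tfrac{\epsilon^{2}s^{2}}{3CT}\big)=\tfrac{3a^{2}CT}{4\epsilon^{2}}$; hence the complement integral is bounded by a constant $C_{2}(n,a,b,\epsilon,T)$, which is the asserted upper bound.

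For the lower bound I would keep only the ball, $v(x,t)\ge\inf_{B(x,\epsilon r(x))}u\cdot\int_{B(x,\epsilon r(x))}H\,d\mu$, and show $\int_{B(x,\epsilon r(x))}H(x,y,t)\,d\mu(y)\ge C_{1}(n,\epsilon)>0$ whenever $r(x)\ge\sqrt T$ and $0\le t\le T$. If $\sqrt t\le\epsilon r(x)$, then $B(x,\sqrt t)\subseteq B(x,\epsilon r(x))$ and the near‑diagonal lower bound gives $\int_{B(x,\epsilon r(x))}H\ge\int_{B(x,\sqrt t)}H\ge c\,V(x,\sqrt t)^{-1}V(x,\sqrt t)=c(n)$. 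If $\epsilon r(x)<\sqrt t$, then every $y\in B(x,\epsilon r(x))$ still has $d(x,y)\le\sqrt t$, so $\int_{B(x,\epsilon r(x))}H\ge c\,V(x,\sqrt t)^{-1}V(x,\epsilon r(x))$; since $\sqrt t\le\sqrt T\le r(x)$, doubling yields $V(x,\sqrt t)/V(x,\epsilon r(x))\le V(x,r(x))/V(x,\epsilon r(x))\le C(n)\epsilon^{-N(n)}$, whence $\int_{B(x,\epsilon r(x))}H\ge c(n)\epsilon^{N(n)}$. Taking $C_{1}(n,\epsilon)$ to be the minimum of the two bounds finishes the argument.

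The one delicate point is uniformity of the constants over the degenerate range $0<t\le T$, $r(x)\ge\sqrt T$: as $t\to0$ with $r(x)$ large the prefactor $V(x,\sqrt t)^{-1}$ blows up and the growth $e^{ar(x)}$ of $u$ must be absorbed. The completing‑the‑square split is arranged precisely so that the Gaussian factor $e^{-\epsilon^{2}r(x)^{2}/(3CT)}$ dominates $e^{ar(x)}$ uniformly in $r(x)$, and the two‑case analysis in the lower bound (whether $\sqrt t\lessgtr\epsilon r(x)$) handles the competition between $\sqrt t$ and $\epsilon r(x)$; once the CR heat‑kernel bounds and the CC‑ball doubling are in hand, no further obstacle remains.
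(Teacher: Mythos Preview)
Your proof is correct and follows essentially the same strategy as the paper's: both arguments split $v(x,t)$ at the ball $B(x,\epsilon r(x))$, use the two-sided Li--Yau Gaussian bounds for the CR heat kernel together with Carnot--Carath\'eodory volume doubling, and absorb the exponential growth of $u$ into the Gaussian decay. The only cosmetic difference is in the bookkeeping for the tail integral: the paper passes through annular averages $\widehat{|u|}_{B(x,s)}$ and re-centers at $o$ via a volume comparison before integrating in $s$, whereas you invoke the triangle inequality $r(y)\le r(x)+d(x,y)$ directly and complete the square in the Gaussian; for the lower bound the paper integrates once over $B(x,\epsilon\sqrt t)\subset B(x,\epsilon r(x))$ (since $\sqrt t\le\sqrt T\le r(x)$) rather than splitting into your two cases, but both routes land on the same $V(x,\epsilon\sqrt t)/V(x,\sqrt t)\ge c(n)\epsilon^{N(n)}$ bound.
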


\begin{proof}
If $s\geq\epsilon r\left(  x\right)  $, then
\begin{equation}
\widehat{\left\vert u\right\vert }_{B\left(  x,s\right)  }\leq\frac
{V_{o}\left(  \left(  1+\frac{1}{\epsilon}\right)  s\right)  }{V_{x}\left(
s\right)  }\widehat{\left\vert u\right\vert }_{B\left(  o,\left(  1+\frac
{1}{\epsilon}\right)  s\right)  }\leq C\left(  n,\epsilon\right)
\widehat{\left\vert u\right\vert }_{B\left(  o,\left(  1+\frac{1}{\epsilon
}\right)  s\right)  }. \label{m1}%
\end{equation}
Where we use the fact that
\[%
\begin{array}
[c]{ccl}%
V_{o}\left(  \left(  1+\frac{1}{\epsilon}\right)  s\right)  & \leq &
V_{o}\left(  \left(  1+\frac{1}{\epsilon}\right)  s+r\left(  x\right)  \right)
\\
& \leq & V_{o}\left(  \left(  1+\frac{2}{\epsilon}\right)  s\right) \\
& \leq & C\left(  n\right)  \left(  1+\frac{2}{\epsilon}\right)
^{2C_{9}\left(  n\right)  }V_{x}\left(  s\right) \\
& = & C\left(  n,\epsilon\right)  V_{x}\left(  s\right)  .
\end{array}
\]

Here $V_{x}\left(  s\right)  $ and $\widehat{\left\vert u\right\vert }%
_{B_{x}\left(  s\right)  }$ denote the volume of the Carnot-Carath\'{e}odory
ball $B\left(  x,s\right)  =B_{x}\left(  s\right)  $ with respect to the
volume element $d\mu=\theta\wedge\left(  d\theta\right)  ^{n}$ and the average
of the absolute value of the function $u$ over $B_{x}\left(  s\right)  $
respectively. Due to
\[
r\left(  x\right)  \geq\sqrt{T},
\]
we have%
\begin{equation}
\frac{V_{x}\left(  s\right)  }{V_{x}\left(  \sqrt{t}\right)  }\leq\frac
{V_{x}\left(  \frac{s}{\epsilon}\right)  }{V_{x}\left(  \sqrt{t}\right)  }\leq
C\left(  n,\epsilon\right)  \left(  \frac{s}{\sqrt{t}}\right)  ^{2C_{9}\left(
n\right)  }. \label{m2}%
\end{equation}

It follows from (\ref{m1}), (\ref{m2}), and
\[
\exp\left(  a\left(  1+\frac{1}{\epsilon}\right)  s+b-\frac{C_{5}\left(
n\right)  }{2t}s^{2}\right)  \leq C\left(  n,a,b,\epsilon,T\right)
\]
for $s\in\left[  0,+\infty\right)  $, that we are able to derive%
\[%
\begin{array}
[c]{cl}
& \left\vert v\left(  x,t\right)  -%
%TCIMACRO{\dint \limits_{B\left(  x,\epsilon r\left(  x\right)  \right)  }}%
%BeginExpansion
{\displaystyle\int\limits_{B\left(  x,\epsilon r\left(  x\right)  \right)  }}
%EndExpansion
H\left(  x,y,t\right)  u\left(  y\right)  d\mu\left(  y\right)  \right\vert \\
\leq &
%TCIMACRO{\dint \limits_{M\backslash B\left(  x,\epsilon r\left(  x\right)
%\right)  }}%
%BeginExpansion
{\displaystyle\int\limits_{M\backslash B\left(  x,\epsilon r\left(  x\right)
\right)  }}
%EndExpansion
H\left(  x,y,t\right)  u\left(  y\right)  d\mu\left(  y\right) \\
\leq & \frac{C\left(  n\right)  }{V_{x}\left(  \sqrt{t}\right)  }%
\int_{\epsilon r\left(  x\right)  }^{+\infty}\exp\left(  -C_{5}\left(
n\right)  \frac{s^{2}}{t}\right)  \left(
%TCIMACRO{\dint \limits_{\partial B_{x}\left(  s\right)  }}%
%BeginExpansion
{\displaystyle\int\limits_{\partial B_{x}\left(  s\right)  }}
%EndExpansion
\left\vert u\right\vert \right)  ds\\
\leq & C\left(  n\right)  \int_{\epsilon r\left(  x\right)  }^{+\infty}%
\frac{V_{x}\left(  s\right)  }{V_{x}\left(  \sqrt{t}\right)  }\left(
\left\vert u\right\vert _{B\left(  x,s\right)  }\right)  \exp\left(
-C_{5}\left(  n\right)  \frac{s^{2}}{t}\right)  d\left(  \frac{s^{2}}%
{t}\right) \\
\leq & C\left(  n,\epsilon\right)  \int_{\epsilon r\left(  x\right)
}^{+\infty}\left(  \frac{s}{\sqrt{t}}\right)  ^{2C_{9}\left(  n\right)
}\left(  \left\vert u\right\vert _{B\left(  o,\left(  1+\frac{1}{\epsilon
}\right)  s\right)  }\right)  \exp\left(  -C_{5}\left(  n\right)  \frac{s^{2}%
}{t}\right)  d\left(  \frac{s^{2}}{t}\right) \\
\leq & C\left(  n,a,b,\epsilon,T\right)
\end{array}
\]

by the CR heat kernel estimate and the CR volume doubling property
(\cite{ccht}). This implies that%
\[%
\begin{array}
[c]{ccl}%
v\left(  x,t\right)  & \leq & C\left(  n,a,b,\epsilon,T\right)  +%
%TCIMACRO{\dint \limits_{B\left(  x,\epsilon r\left(  x\right)  \right)  }}%
%BeginExpansion
{\displaystyle\int\limits_{B\left(  x,\epsilon r\left(  x\right)  \right)  }}
%EndExpansion
H\left(  x,y,t\right)  u\left(  y\right)  d\mu\left(  y\right) \\
& \leq & C_{2}\left(  n,a,b,\epsilon,T\right)  +\sup_{B\left(  x,\epsilon
r\left(  x\right)  \right)  }u.
\end{array}
\]

On the other hand, it is not difficult to deduce that%
\[%
\begin{array}
[c]{ccl}%
v\left(  x,t\right)  & \geq &
%TCIMACRO{\dint \limits_{B\left(  x,\epsilon r\left(  x\right)  \right)  }}%
%BeginExpansion
{\displaystyle\int\limits_{B\left(  x,\epsilon r\left(  x\right)  \right)  }}
%EndExpansion
H\left(  x,y,t\right)  u\left(  y\right)  d\mu\left(  y\right) \\
& \geq & \frac{C\left(  n\right)  }{V_{x}\left(  \sqrt{t}\right)  }%
%TCIMACRO{\dint \limits_{B\left(  x,\epsilon\sqrt{t}\right)  }}%
%BeginExpansion
{\displaystyle\int\limits_{B\left(  x,\epsilon\sqrt{t}\right)  }}
%EndExpansion
\exp\left(  -C_{7}\left(  n\right)  \frac{d_{cc}^{2}\left(  x,y\right)  }%
{t}\right)  u\left(  y\right)  d\mu\left(  y\right) \\
& \geq & C\left(  n,\epsilon\right)  \frac{V_{x}\left(  \epsilon\sqrt
{t}\right)  }{V_{x}\left(  \sqrt{t}\right)  }\underset{B\left(  x,\epsilon
r\left(  x\right)  \right)  }{\inf}u\\
& \geq & C_{1}\left(  n,\epsilon\right)  \underset{B\left(  x,\epsilon
r\left(  x\right)  \right)  }{\inf}u.
\end{array}
\]

The proof is accomplished.
\end{proof}

\begin{lemma}
\label{l2} Let $\left(  M,J,\theta\right)  $ be a complete noncompact Sasakian
$\left(  2n+1\right)  $-manifold of nonnegative pseudohermitian bisectional
curvature and $v\left(  x,t\right)  $ be a nonnegative solution to the CR heat
equation on $M\times\left[  0,T\right]  $. Then $\left\Vert \eta
_{\alpha\overline{\beta}}\left(  x,t\right)  \right\Vert $ is a subsolution to
the CR heat equation. Furthermore, if%
\begin{equation}%
%TCIMACRO{\dint \limits_{M}}%
%BeginExpansion
{\displaystyle\int\limits_{M}}
%EndExpansion
\exp\left(  -ar^{2}\left(  x\right)  \right)  \left\Vert \eta_{\alpha
\overline{\beta}}\left(  x,0\right)  \right\Vert d\mu\left(  x\right)
<+\infty\label{m3}%
\end{equation}
and%
\begin{equation}
\underset{r\longrightarrow+\infty}{\lim\inf}\int_{0}^{T}%
%TCIMACRO{\dint \limits_{B_{o}\left(  r\right)  }}%
%BeginExpansion
{\displaystyle\int\limits_{B_{o}\left(  r\right)  }}
%EndExpansion
\exp\left(  -ar^{2}\left(  x\right)  \right)  \left\Vert \eta_{\alpha
\overline{\beta}}\left(  x,t\right)  \right\Vert ^{2}dxdt<+\infty\label{m4}%
\end{equation}
for any positive number $a>0$. \ Then%
\[
\left\Vert \eta_{\alpha\overline{\beta}}\left(  x,t\right)  \right\Vert \leq
h\left(  x,t\right)
\]
on $M\times\left[  0,T\right]  $. \ Here $\eta_{\alpha\overline{\beta}%
}=v_{\alpha\overline{\beta}}+v_{\overline{\beta}\alpha}$ and
\begin{equation}
h\left(  x,t\right)  =%
%TCIMACRO{\dint \limits_{M}}%
%BeginExpansion
{\displaystyle\int\limits_{M}}
%EndExpansion
H\left(  x,y,t\right)  \left\Vert \eta_{\alpha\overline{\beta}}\left(
y,0\right)  \right\Vert d\mu\left(  y\right)  \label{m5}%
\end{equation}
on $M\times\left[  0,+\infty\right)  $.
\end{lemma}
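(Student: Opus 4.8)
The plan is to establish the lemma in two stages. First, I would prove the pointwise (barrier-sense) differential inequality $(\partial_t-\Delta_b)\|\eta_{\alpha\overline{\beta}}\|\le 0$, i.e. that $\|\eta_{\alpha\overline{\beta}}\|$ is a subsolution of the CR heat equation on $M\times[0,T]$. Second, I would run a parabolic comparison argument: the function $h$ in (\ref{m5}) solves the CR heat equation with the same initial data $\|\eta_{\alpha\overline{\beta}}(\cdot,0)\|$, so $f:=\|\eta_{\alpha\overline{\beta}}\|-h$ is a subsolution vanishing at $t=0$, and the growth hypotheses (\ref{m3})--(\ref{m4}) place $f$ in the uniqueness class of a CR analogue of the Karp--Li maximum principle, which forces $f\le 0$.

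For the first stage, recall that $v$ solves the scalar CR heat equation with $v_0=Tv=0$. Differentiating this equation in two horizontal directions and commuting $\Delta_b$ past $Z_\alpha$ and $Z_{\overline{\beta}}$ by means of the CR commutation formulas used in \cite{cchl} and \cite{chl1}, one obtains that $\eta_{\alpha\overline{\beta}}=v_{\alpha\overline{\beta}}+v_{\overline{\beta}\alpha}$ is basic and satisfies the CR Lichnerowicz--Laplacian heat equation, an identity of the form $(\partial_t-\Delta_b)\eta_{\alpha\overline{\beta}}=\mathcal{R}(\eta)_{\alpha\overline{\beta}}$ whose right-hand side depends algebraically on the pseudohermitian curvature tensor. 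Here the Sasakian hypothesis (vanishing pseudohermitian torsion) and $v_0=0$ are essential: they eliminate all of the Reeb-direction and torsion-derivative terms that the commutation would otherwise produce. Contracting this identity against $\overline{\eta_{\alpha\overline{\beta}}}$ and invoking the Bochner-type formula $(\partial_t-\Delta_b)\|\eta\|^2=2\,\mathrm{Re}\,\langle(\partial_t-\Delta_b)\eta,\eta\rangle-2|\nabla_b\eta|^2$, the non-negativity of the pseudohermitian bisectional curvature makes the zeroth-order term $\langle\mathcal{R}(\eta),\eta\rangle$ non-positive, so that $(\partial_t-\Delta_b)\|\eta\|^2\le -2|\nabla_b\eta|^2\le 0$; Kato's inequality then gives $(\partial_t-\Delta_b)\|\eta\|\le 0$ in the barrier sense, which is the first assertion.

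For the second stage, by the CR heat-kernel Gaussian upper bound and the volume-doubling property on a Sasakian manifold of nonnegative pseudohermitian (hence Webster) Ricci curvature (\cite{ccht}), the integrability hypothesis (\ref{m3}) guarantees that $h(x,t)=\int_M H(x,y,t)\|\eta_{\alpha\overline{\beta}}(y,0)\|\,d\mu(y)$ is finite for every $(x,t)\in M\times(0,\infty)$, solves $\partial_t h=\Delta_b h$, and has $h(\cdot,0)=\|\eta_{\alpha\overline{\beta}}(\cdot,0)\|$. Since $h\ge 0$, the function $f=\|\eta_{\alpha\overline{\beta}}\|-h$ satisfies $f_+\le\|\eta_{\alpha\overline{\beta}}\|$, so (\ref{m4}) gives $\liminf_{r\to\infty}\int_0^T\int_{B_o(r)}e^{-ar^2(x)}f_+^2\,dx\,dt<\infty$ for every $a>0$; together with $(\partial_t-\Delta_b)f\le 0$ and $f(\cdot,0)=0$, this is precisely the hypothesis of the heat-equation uniqueness/maximum principle of Karp--Li type, whose proof---integrating the differential inequality against a Gaussian-type cutoff and iterating a Caccioppoli estimate---carries over verbatim to the sub-Laplacian $\Delta_b$ once the doubling and heat-kernel estimates above are in hand. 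This yields $f\le 0$, i.e. $\|\eta_{\alpha\overline{\beta}}(x,t)\|\le h(x,t)$ on $M\times[0,T]$.

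I expect the crux to lie in the first stage, specifically in pinning down the CR Lichnerowicz--Laplacian heat equation with the correct sign of its zeroth-order term, so that nonnegativity of the pseudohermitian bisectional curvature genuinely forces $\langle\mathcal{R}(\eta),\eta\rangle\le 0$ after contraction; this is the point where the Sasakian structure and the condition $v_0=0$ must be exploited carefully to remove the torsion and Reeb-direction contributions that otherwise spoil the sign. The second stage is essentially technical bookkeeping provided one is willing to quote the CR Karp--Li uniqueness theorem; making that step self-contained rather than cited would be the main additional labor, but it follows the now-standard template for complete noncompact manifolds with controlled volume growth.
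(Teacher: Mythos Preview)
Your proposal is correct and follows essentially the same route as the paper's proof: derive the CR Lichnerowicz--Laplacian heat equation for $\eta_{\alpha\overline{\beta}}$ (the paper cites \cite[(3.5)]{cftw} together with vanishing torsion), use nonnegative pseudohermitian bisectional curvature plus Kato's inequality to obtain $(\Delta_b-\partial_t)\|\eta_{\alpha\overline{\beta}}\|\ge 0$, and then apply the Karp--Li type maximum principle (the paper quotes \cite[Lemma 4.5]{ccf}) to the subsolution $\|\eta_{\alpha\overline{\beta}}\|-h$. One small remark: the lemma as stated does not assume $v_0=0$, and the paper's derivation of the Lichnerowicz--Laplacian equation relies only on the Sasakian hypothesis (vanishing torsion), so your invocation of $v_0=0$ in the first stage is unnecessary here, though harmless in the application.
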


\begin{remark}
It is not difficult to observe the existence of the function $h\left(
x,t\right)  $ in $M\times\left[  0,+\infty\right)  $ is ensured by (\ref{m3}).
\end{remark}

\begin{proof}
By \cite[(3.5)]{cftw} and the vanishing pseudohermitian torsion, we see that%
\[
\left(  \frac{\partial}{\partial t}-\Delta_{b}\right)  v_{\alpha
\overline{\beta}}=2R_{\delta\overline{\gamma}\alpha\overline{\beta}}%
v_{\gamma\overline{\delta}}-R_{\alpha\overline{\mu}}v_{\mu\overline{\beta}%
}-R_{\mu\overline{\beta}}v_{\alpha\overline{\mu}}.
\]
Therefore%
\[
\left(  \frac{\partial}{\partial t}-\Delta_{b}\right)  \eta_{\alpha
\overline{\beta}}=2R_{\delta\overline{\gamma}\alpha\overline{\beta}}%
\eta_{\gamma\overline{\delta}}-R_{\alpha\overline{\mu}}\eta_{\mu
\overline{\beta}}-R_{\mu\overline{\beta}}\eta_{\alpha\overline{\mu}}.
\]

By straightforward calculation, we have
\[%
\begin{array}
[c]{cl}
& \left(  \Delta_{b}-\frac{\partial}{\partial t}\right)  \left\Vert
\eta_{\alpha\overline{\beta}}\left(  x,t\right)  \right\Vert ^{2}\\
= & \left(  \eta_{\alpha\overline{\beta}\gamma}\eta_{\overline{\alpha}\beta
}+\eta_{\alpha\overline{\beta}}\eta_{\overline{\alpha}\beta\gamma}\right)
_{\overline{\gamma}}+conj.+\frac{\partial}{\partial t}\left(  \eta
_{\alpha\overline{\beta}}\eta_{\overline{\alpha}\beta}\right) \\
= & 2\left(  \left\vert \eta_{\alpha\overline{\beta}\gamma}\right\vert
^{2}+\left\vert \eta_{\alpha\overline{\beta}\overline{\gamma}}\right\vert
^{2}\right)  +\eta_{\overline{\alpha}\beta}\left(  \Delta_{b}-\frac{\partial
}{\partial t}\right)  \eta_{\alpha\overline{\beta}}+\eta_{\alpha
\overline{\beta}}\left(  \Delta_{b}-\frac{\partial}{\partial t}\right)
\eta_{\overline{\alpha}\beta}\\
= & 2\left(  \left\vert \eta_{\alpha\overline{\beta}\gamma}\right\vert
^{2}+\left\vert \eta_{\alpha\overline{\beta}\overline{\gamma}}\right\vert
^{2}\right)  +4\left(  R_{\delta\overline{\gamma}\alpha\overline{\beta}}%
\eta_{\gamma\overline{\delta}}\eta_{\overline{\alpha}\beta}-R_{\alpha
\overline{\mu}}\eta_{\mu\overline{\beta}}\eta_{\overline{\alpha}\beta}\right)
\\
\geq & 2\left(  \left\vert \eta_{\alpha\overline{\beta}\gamma}\right\vert
^{2}+\left\vert \eta_{\alpha\overline{\beta}\overline{\gamma}}\right\vert
^{2}\right) \\
\geq & 4\left\vert \frac{\eta_{\alpha\overline{\beta}\gamma}\eta
_{\overline{\alpha}\beta}+\eta_{\alpha\overline{\beta}\overline{\gamma}}%
\eta_{\overline{\alpha}\beta}}{2\left\vert \eta_{\alpha\overline{\beta}%
}\right\vert }\right\vert ^{2}\\
= & 4\left\vert \left\vert \eta_{\alpha\overline{\beta}}\right\vert _{\gamma
}\right\vert ^{2}\\
= & 2\left\vert \nabla_{b}\left\Vert \eta_{\alpha\overline{\beta}}\left(
x,t\right)  \right\Vert \right\vert ^{2}.
\end{array}
\]

This is to say that
\[
\left(  \Delta_{b}-\frac{\partial}{\partial t}\right)  \left\Vert \eta
_{\alpha\overline{\beta}}\left(  x,t\right)  \right\Vert \geq0.
\]

It is clear that $\left\Vert \eta_{\alpha\overline{\beta}}\left(  x,t\right)
\right\Vert -h\left(  x,t\right)  $ is a subsolution to the CR heat equation.
It follows from (\ref{m4}) and \cite[Lemma 4.5]{ccf} that
\[
\left\Vert \eta_{\alpha\overline{\beta}}\left(  x,t\right)  \right\Vert \leq
h\left(  x,t\right)  .
\]

\end{proof}

\begin{lemma}
\label{l3} Let $\left(  M,J,\theta\right)  $ be a complete noncompact Sasakian
$\left(  2n+1\right)  $-manifold of nonnegative pseudohermitian Ricci
curvature with
\[
S=%
%TCIMACRO{\dint \limits_{M}}%
%BeginExpansion
{\displaystyle\int\limits_{M}}
%EndExpansion
\exp\left(  -ar^{2}\left(  x\right)  \right)  \left\Vert \eta_{\alpha
\overline{\beta}}\left(  x,0\right)  \right\Vert d\mu\left(  x\right)
<+\infty
\]
for any positive number $a$. Here $\eta$ is chosen as in Lemma \ref{l2}. Then
there is a positive function $\tau\left(  R\right)  $ with $\tau\left(
R\right)  \rightarrow0^{+}$ as $R$ goes to infinity, such that, for any
positive number $T$,
\[
h\left(  x,t\right)  \leq\tau\left(  R\right)
\]
on $A\left(  o;\frac{R}{2},R\right)  \times\left[  0,T\right]  .$ Here
$h\left(  x,t\right)  $ is chosen as in (\ref{m5}).
\end{lemma}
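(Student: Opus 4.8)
The plan is to exploit the fact that, in the setting of Lemma \ref{l2}, the tensor $\eta_{\alpha\overline{\beta}}(\cdot,0)=v_{\alpha\overline{\beta}}(\cdot,0)+v_{\overline{\beta}\alpha}(\cdot,0)$ is built from $v(\cdot,0)=u\in C_c^\infty(M)$, hence is itself compactly supported: there are $\rho_0>0$ and $K<\infty$, depending only on $u$, with $\operatorname{supp}\|\eta_{\alpha\overline{\beta}}(\cdot,0)\|\subseteq B_o(\rho_0)$ and $\sup_M\|\eta_{\alpha\overline{\beta}}(\cdot,0)\|=K$ (so the hypothesis $S<+\infty$ is automatic and is retained only for emphasis). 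First I would record the resulting separation: once $R\ge 4\rho_0$, every $x\in A(o;\tfrac R2,R)$ and every $y\in B_o(\rho_0)$ satisfy, by the triangle inequality for $d_{cc}$, the bound $d_{cc}(x,y)\ge r(x)-r(y)\ge \tfrac R2-\rho_0\ge \tfrac R4$. Consequently
\[
h(x,t)=\int_{B_o(\rho_0)}H(x,y,t)\,\|\eta_{\alpha\overline{\beta}}(y,0)\|\,d\mu(y)\ \le\ K\int_{\{y:\,d_{cc}(x,y)\ge R/4\}}H(x,y,t)\,d\mu(y),
\]
and the whole statement reduces to a uniform tail estimate for the CR heat kernel at points $x$ whose CC-distance from the support is at least $R/4$.

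For that tail estimate I would invoke, exactly as in the proof of Lemma \ref{l1}, the CR Gaussian upper bound and the CR volume doubling property valid under the nonnegative pseudohermitian Ricci hypothesis (\cite{ccht}): $H(x,y,t)\le \tfrac{C(n)}{V_x(\sqrt t)}\exp\!\big(-C_5(n)\,d_{cc}^2(x,y)/t\big)$ and $V_x(s)\le C(n)\,(s/\sqrt t)^{2C_9(n)}V_x(\sqrt t)$ for $s\ge \sqrt t$. Applying the coarea formula and integrating by parts in the radial variable (the boundary term at $\infty$ vanishes by Bishop--Gromov polynomial volume growth and the Gaussian factor, and the boundary term at $R/4$ is nonpositive), then substituting $w=s/\sqrt t$ and using the doubling bound, gives, for $R\ge 4\sqrt T$ (so that $R/4\ge \sqrt t$ on $[0,T]$),
\[
\int_{\{d_{cc}(x,\cdot)\ge R/4\}}H(x,y,t)\,d\mu(y)\ \le\ C(n)\int_{R/(4\sqrt t)}^{\infty}w^{2C_9(n)+1}e^{-C_5(n)w^2}\,dw.
\]
Since $t\le T$ we have $R/(4\sqrt t)\ge R/(4\sqrt T)$, so for $R\ge\max(4\rho_0,4\sqrt T)$ the right-hand side is dominated by
\[
\tau(R):=K\,C(n)\int_{R/(4\sqrt T)}^{\infty}w^{2C_9(n)+1}e^{-C_5(n)w^2}\,dw,
\]
which is finite and, being the tail of a convergent integral, satisfies $\tau(R)\to 0^+$ as $R\to\infty$; for the finitely many small $R$ one simply sets $\tau(R):=K$, using $\int_M H(x,\cdot,t)\,d\mu=1$, which does not affect $\tau(R)\to 0^+$. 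This yields $h(x,t)\le\tau(R)$ on $A(o;\tfrac R2,R)\times[0,T]$, as desired. (A dyadic decomposition of $\{d_{cc}(x,\cdot)\ge R/4\}$ into annuli $\{2^kR/4\le d_{cc}<2^{k+1}R/4\}$ is an equally clean alternative and in fact produces an exponential bound $\tau(R)\asymp e^{-c(n,T)R^2}$.)

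I do not expect a genuine obstacle here; the computation is a routine heat-kernel tail estimate. The three points that require care are: (i) recording, via Lemma \ref{l2}, that $\eta_{\alpha\overline{\beta}}(\cdot,0)$ is compactly supported — this is what produces the crucial separation $d_{cc}(x,y)\ge R/4$, and hence genuine \emph{decay} in $R$ rather than mere boundedness; (ii) the standard bookkeeping needed to beat the blow-up of $V_x(\sqrt t)^{-1}$ as $t\to 0^+$ against the Gaussian factor, which is precisely the volume-doubling inequality $V_x(s)/V_x(\sqrt t)\le C(n)(s/\sqrt t)^{2C_9(n)}$ already used in Lemma \ref{l1}; and (iii) noting that the cut-off $R/(4\sqrt T)$ (and hence $\tau$) necessarily depends on the fixed horizon $T$ and on the fixed data $\rho_0,K$ attached to $u$, the point being simply that $R/(4\sqrt T)\to\infty$ drives the decay for each fixed $T$.
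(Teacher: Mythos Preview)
Your proof is correct and follows essentially the same route as the paper: both exploit the compact support of $\eta_{\alpha\overline\beta}(\cdot,0)$ (inherited from $u\in C_c^\infty(M)$) to obtain the separation $d_{cc}(x,y)\ge R/4$ on $A(o;R/2,R)\times B_o(\rho_0)$, and then combine the Gaussian upper bound for the CR heat kernel with volume doubling to produce the decay. The only organizational difference is that the paper estimates $h\le(\sup_y H)\cdot\int\|\eta(\cdot,0)\|\,d\mu$ and controls the integral via the hypothesis $S$ together with a small-$a$ factor $\exp(aR^2/16)$, whereas you estimate $h\le(\sup\|\eta(\cdot,0)\|)\cdot\int_{\{d_{cc}\ge R/4\}}H\,d\mu$ and bound the heat-kernel tail directly by coarea; this is a cosmetic difference, and your version in fact avoids the auxiliary choice of $a$.
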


\begin{proof}
Fix $T>0$ and let $R$ $\left(  \geq\sqrt{T}\right)  $ be a sufficiently large
positive number (may assume supp($u$) $\subseteq B\left(  o,\frac{R}%
{8}\right)  $). Let $x\in A\left(  o;\frac{R}{2},R\right)  $. We have, for a
sufficiently small $a>0$,%
\[%
\begin{array}
[c]{ccl}%
h\left(  x,t\right)  & = &
%TCIMACRO{\dint \limits_{B\left(  o,\frac{R}{4}\right)  }}%
%BeginExpansion
{\displaystyle\int\limits_{B\left(  o,\frac{R}{4}\right)  }}
%EndExpansion
H\left(  x,y,t\right)  \left\Vert \eta_{\alpha\overline{\beta}}\left(
y,0\right)  \right\Vert d\mu\left(  y\right) \\
& \leq & \left(  \underset{y\in B\left(  o,\frac{R}{4}\right)  }{\sup}H\left(
x,y,t\right)  \right)
%TCIMACRO{\dint \limits_{B\left(  o,\frac{R}{4}\right)  }}%
%BeginExpansion
{\displaystyle\int\limits_{B\left(  o,\frac{R}{4}\right)  }}
%EndExpansion
\left\Vert \eta_{\alpha\overline{\beta}}\left(  y,0\right)  \right\Vert
d\mu\left(  y\right) \\
& \leq & \frac{C\left(  n\right)  }{V_{x}\left(  \sqrt{t}\right)  }\left(
\underset{y\in B\left(  o,\frac{R}{4}\right)  }{\sup}\exp\left(  -C_{5}\left(
n\right)  \frac{d_{cc}^{2}\left(  x,y\right)  }{t}\right)  \right) \\
&  & \times\exp\left(  \frac{a}{16}R^{2}\right)
%TCIMACRO{\dint \limits_{M}}%
%BeginExpansion
{\displaystyle\int\limits_{M}}
%EndExpansion
\exp\left(  -ar^{2}\left(  x\right)  \right)  \left\Vert \eta_{\alpha
\overline{\beta}}\left(  x,0\right)  \right\Vert d\mu\left(  y\right) \\
& \leq & C\left(  n\right)  \left(  \frac{R}{\sqrt{t}}\right)  ^{2C_{9}\left(
n\right)  }\frac{S}{V_{o}\left(  R\right)  }\exp\left(  -C_{5}\left(
n\right)  \frac{R^{2}}{16t}+\frac{a}{16}R^{2}\right) \\
& = & \tau\left(  R\right) \\
& \longrightarrow & 0^{+}%
\end{array}
\]
as $R\longrightarrow+\infty$. Here we utilize the CR heat kernel estimate, the
facts that $d_{cc}\left(  x,y\right)  \geq\frac{R^{2}}{16}$, and the CR volume
doubling property%
\[
V_{o}\left(  R\right)  \leq V_{x}\left(  3R\right)  \leq C\left(  n\right)
V_{x}\left(  R\right)  .
\]

\end{proof}

\begin{lemma}
\label{l4} Let $\left(  M,J,\theta\right)  $ be a complete noncompact Sasakian
$\left(  2n+1\right)  $-manifold of nonnegative pseudohermitian Ricci
curvature and $v\left(  x,t\right)  $ be the solution to the CR heat equation
with the initial condition $v\left(  x,0\right)  =u\left(  x\right)  $ for
$u\in C_{c}^{\infty}\left(  M\right)  ,$ $u_{0}=0$, and $\eta_{\alpha
\overline{\beta}}=v_{\alpha\overline{\beta}}+v_{\overline{\beta}\alpha}$. Then%
\[
\underset{r\longrightarrow+\infty}{\lim\inf}\int_{0}^{T}%
%TCIMACRO{\dint \limits_{B_{o}\left(  r\right)  }}%
%BeginExpansion
{\displaystyle\int\limits_{B_{o}\left(  r\right)  }}
%EndExpansion
\exp\left(  -ar^{2}\left(  x\right)  \right)  \left\Vert \eta_{\alpha
\overline{\beta}}\left(  x,t\right)  \right\Vert ^{2}dxdt<+\infty.
\]

\end{lemma}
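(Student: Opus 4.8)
The plan is to prove the stronger, weight-free statement
\[
\int_0^T\!\int_M \|\eta_{\alpha\overline{\beta}}(x,t)\|^2\,d\mu(x)\,dt<+\infty ,
\]
which, since the integrand is nonnegative and $\exp(-ar^2(x))\le 1$, immediately yields the asserted $\liminf$ bound (indeed with $\liminf$ replaced by an honest finite limit). Two features of the hypotheses make this almost formal. First, $u\in C_c^\infty(M)$, so the initial energies $\|u\|_{L^2}$, $\|\nabla_b u\|_{L^2}$ are finite. Second, the pseudohermitian torsion vanishes and $u_0=0$; as is used throughout the paper and is encoded in the evolution equations recorded in the proof of Lemma \ref{l2} (following \cite{cftw}), this forces $v_0=0$ for all $t$, so in particular $v_{\alpha\overline{\beta}}=v_{\overline{\beta}\alpha}$, whence $\eta_{\alpha\overline{\beta}}=2v_{\alpha\overline{\beta}}$ and $\|\eta_{\alpha\overline{\beta}}\|^2\le C|\nabla_b^2 v|^2$. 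It therefore suffices to bound $\int_0^T\!\int_M |\nabla_b^2 v|^2\,d\mu\,dt$, and I would do this by two successive energy estimates with spatial cut-offs.

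\textbf{Step 1 (first order energy).} From $v(x,t)=\int_M H(x,y,t)u(y)\,d\mu(y)$ and $\int_M H(x,y,t)\,d\mu(x)\le 1$, Cauchy--Schwarz and Fubini give the $L^2$-contraction $\sup_{t\in[0,T]}\|v(\cdot,t)\|_{L^2}\le\|u\|_{L^2}$. Let $\phi_R$ be a cut-off of $r(x)=d_{cc}(x,o)$ with $\phi_R\equiv1$ on $B_o(R)$, $\operatorname{supp}\phi_R\subset B_o(2R)$, and $|\nabla_b\phi_R|\le C/R$. Testing $\partial_t v=\Delta_b v$ against $\phi_R^2 v$, integrating over $M\times[0,T]$, moving one derivative off $\phi_R^2 v$ and absorbing the cut-off cross term by Cauchy--Schwarz, I obtain
\[
\int_0^T\!\int_M \phi_R^2|\nabla_b v|^2\,d\mu\,dt\ \le\ \|u\|_{L^2}^2+\frac{C}{R^2}\int_0^T\!\int_{B_o(2R)} v^2\,d\mu\,dt\ \le\ \|u\|_{L^2}^2+\frac{CT}{R^2}\|u\|_{L^2}^2 .
\]
Letting $R\to\infty$ by monotone convergence gives $\int_0^T\!\int_M|\nabla_b v|^2\,d\mu\,dt\le\|u\|_{L^2}^2<+\infty$.

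\textbf{Step 2 (second order energy).} By the CR Bochner formula on a Sasakian manifold, with vanishing torsion and $v_0=0$,
\[
\Big(\partial_t-\Delta_b\Big)|\nabla_b v|^2=-2|\nabla_b^2 v|^2-2\,\mathrm{Ric}(\nabla_b v,\nabla_b v)\le-2|\nabla_b^2 v|^2 ,
\]
the last inequality using nonnegativity of the pseudohermitian Ricci curvature. Testing against $\phi_R^2$, integrating over $M\times[0,T]$, discarding the nonnegative term $\int_M\phi_R^2|\nabla_b v(\cdot,T)|^2$, using $\big|\nabla_b|\nabla_b v|^2\big|\le 2|\nabla_b v|\,|\nabla_b^2 v|$ and Cauchy--Schwarz on the cut-off term, I get
\[
\int_0^T\!\int_M \phi_R^2|\nabla_b^2 v|^2\,d\mu\,dt\ \le\ \|\nabla_b u\|_{L^2}^2+\frac{C}{R^2}\int_0^T\!\int_{B_o(2R)} |\nabla_b v|^2\,d\mu\,dt .
\]
By Step 1 the last integral is $\le\|u\|_{L^2}^2$ uniformly in $R$, so letting $R\to\infty$ gives $\int_0^T\!\int_M|\nabla_b^2 v|^2\,d\mu\,dt\le\|\nabla_b u\|_{L^2}^2<+\infty$. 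Combined with $\|\eta_{\alpha\overline{\beta}}\|^2\le C|\nabla_b^2 v|^2$ this produces the desired (and much stronger) bound.

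The routine part is the cut-off/Cauchy--Schwarz bookkeeping; because every integration by parts is against a compactly supported smooth cut-off and $v$ is smooth up to $t=0$, no decay at infinity is needed and the limits $R\to\infty$ are just monotone convergence. The one structural point — and the only place the Sasakian hypothesis really enters — is Step 2: one must check that the terms peculiar to the sub-Laplacian Bochner identity (those involving the Reeb field and the pseudohermitian torsion, together with the commutator $v_{\alpha\overline{\beta}}-v_{\overline{\beta}\alpha}=ih_{\alpha\overline{\beta}}v_0$) genuinely drop out under vanishing torsion and $v_0=0$. That computation is already present, in the equivalent guise of the evolution equation for $\eta_{\alpha\overline{\beta}}$, in the proof of Lemma \ref{l2}; alternatively one may simply test that evolution equation against $\overline{\eta_{\alpha\overline{\beta}}}\,\phi_R^2$ and run the same two-step argument directly on $\|\eta_{\alpha\overline{\beta}}\|^2$.
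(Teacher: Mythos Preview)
Your proof is correct and follows essentially the same two-step energy argument as the paper's own: a cut-off test of the heat equation against $v$ to control $\int|\nabla_b v|^2$, followed by the CR Bochner inequality (valid here because the torsion vanishes and $v_0=0$) together with a cut-off to control $\int|\nabla_b^2 v|^2$. Your execution is in fact a bit sharper: where the paper uses only the $L^\infty$ bound $|v|\le C$ and obtains a volume-averaged estimate $\frac{1}{V_o(r)}\int_0^T\!\int_{B_o(r)}|\mathrm{Hess}_b v|^2\le C(n)(T+1)$ (which then needs the Gaussian weight to be summable over dyadic annuli), you use the $L^2$-contraction of the heat semigroup and pass $R\to\infty$ to obtain the stronger weight-free global bound $\int_0^T\!\int_M|\nabla_b^2 v|^2\,d\mu\,dt\le\|\nabla_b u\|_{L^2}^2<\infty$.
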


\begin{proof}
Because
\[
v\left(  x,t\right)  =%
%TCIMACRO{\dint \limits_{M}}%
%BeginExpansion
{\displaystyle\int\limits_{M}}
%EndExpansion
H\left(  x,y,t\right)  u\left(  y\right)  d\mu\left(  y\right)
\]
and $u\in C_{c}^{\infty}\left(  M\right)  $, we see that $\left\vert v\left(
x,t\right)  \right\vert \leq C$ in $M\times\left[  0,+\infty\right)  $. By the
fact that%
\[
\left(  \Delta_{b}-\frac{\partial}{\partial t}\right)  v^{2}=2\left\vert
\nabla_{b}v\right\vert ^{2}%
\]
and the help of the cut-off function $\varphi\in C_{c}^{\infty}\left(
M\right)  $ satisfying $0\leq\varphi\leq1,$ $\varphi|_{B_{o}\left(  r\right)
}=1,$ $\varphi|_{M\backslash B_{o}\left(  2r\right)  }=0,$ and $\left\vert
\Delta_{b}\varphi\left(  x\right)  \right\vert \leq\frac{1}{r^{2}\left(
x\right)  }$ for any positive number $r$, it is clear that, for any $r\geq1$,%
\begin{equation}%
\begin{array}
[c]{ccl}%
\int_{0}^{T}(\frac{1}{V_{o}\left(  r\right)  }\int_{B_{o}\left(  r\right)
}\left\vert \nabla_{b}v\right\vert ^{2}d\mu)dt & \leq & C\left(  n\right)
\left(  \frac{1}{r^{2}}\int_{0}^{T}(\frac{1}{V_{o}\left(  r\right)  }%
\int_{B_{o}\left(  2r\right)  }v^{2}d\mu)dt+\frac{1}{V_{o}\left(  r\right)
}\int_{B_{o}\left(  r\right)  }u^{2}d\mu\right) \\
& \leq & C\left(  n\right)  \left(  T+1\right)  .
\end{array}
\label{m6}%
\end{equation}
From the CR B\^{o}chner formula and $u_{0}=0$ (this implies that $v_{0}=0$),
we obtain
\[
\left(  \Delta_{b}-\frac{\partial}{\partial t}\right)  \left\vert \nabla
_{b}v\right\vert ^{2}\geq2\left\vert Hess_{b}v\right\vert ^{2}.
\]

With the similar method as precedes, we have, for any $r\geq1$,
\[%
\begin{array}
[c]{ccl}%
\int_{0}^{T}(\frac{1}{V_{o}\left(  r\right)  }\int_{B_{o}\left(  r\right)
}\left\vert Hess_{b}v\right\vert ^{2}d\mu)dt & \leq & C\left(  n\right)
\left(  \frac{1}{r^{2}}\int_{0}^{T}(\frac{1}{V_{o}\left(  r\right)  }%
\int_{B_{o}\left(  2r\right)  }\left\vert \nabla_{b}v\right\vert ^{2}%
d\mu)dt+\frac{1}{V_{o}\left(  r\right)  }\int_{B_{o}\left(  r\right)
}\left\vert \nabla_{b}u\right\vert ^{2}d\mu\right) \\
& \leq & C\left(  n\right)  \left(  T+1\right)  .
\end{array}
\]

It enables us to deduce that
\[
\underset{r\longrightarrow+\infty}{\lim\inf}\int_{0}^{T}%
%TCIMACRO{\dint \limits_{B_{o}\left(  r\right)  }}%
%BeginExpansion
{\displaystyle\int\limits_{B_{o}\left(  r\right)  }}
%EndExpansion
\exp\left(  -ar^{2}\left(  x\right)  \right)  \left\Vert \eta_{\alpha
\overline{\beta}}\left(  x,t\right)  \right\Vert ^{2}dxdt<+\infty.
\]

\end{proof}

\begin{remark}
We could drop the assumption of $u_{0}=0$ by estimating the integral
\[
\int_{0}^{T}(\frac{1}{V_{o}\left(  r\right)  }\int_{B_{o}\left(  r\right)
}\left\vert \left\langle J\nabla_{b}v,\nabla_{b}v_{0}\right\rangle \right\vert
^{2}d\mu)dt\leq\left\Vert \nabla_{b}v\right\Vert _{L^{2}}\left\Vert \nabla
_{b}v_{0}\right\Vert _{L^{2}}\leq C\left(  n\right)  \left(  T+1\right)
\]
by (\ref{m6}).
\end{remark}

Now, we are going to prove Theorem \ref{t1} as follows.

\begin{proof}
We will apply the maximum principle to confirm the validity of this theorem.
The detail is as below. For any small positive number $\epsilon>0$, define%
\[
\widetilde{\eta}_{\alpha\overline{\beta}}\left(  x,t\right)  =\eta
_{\alpha\overline{\beta}}\left(  x,t\right)  +\left(  \epsilon\phi\left(
x,t\right)  -\lambda\left(  x,t\right)  \right)  h_{\alpha\overline{\beta}%
}\left(  x\right)  ,
\]
where
\[
\phi\left(  x,t\right)  =e^{t}%
%TCIMACRO{\dint \limits_{M}}%
%BeginExpansion
{\displaystyle\int\limits_{M}}
%EndExpansion
H\left(  x,y,t\right)  \exp\left(  r\left(  y\right)  \right)  d\mu\left(
y\right)  .
\]

We observe that%
\[
\widetilde{\eta}_{\alpha\overline{\beta}}\left(  x,0\right)  >0
\]
for any $x\in M.$ By Lemma \ref{l2} and Lemma \ref{l3},
\[
\widetilde{\eta}_{\alpha\overline{\beta}}\left(  x,t\right)  >0
\]
on $\partial B_{o}\left(  R\right)  \times\left[  0,T\right]  $ for any
sufficiently large number $R>0$. Suppose there is a point $\left(  x_{0}%
,t_{0}\right)  \in B_{o}\left(  R\right)  \times\left(  0,T\right]  $ such
that%
\[
\widetilde{\eta}_{\alpha\overline{\beta}}\left(  x_{0},t_{0}\right)  <0.
\]
There is a number $t_{1}\in\left[  0,t_{0}\right)  $ such that%
\[
\widetilde{\eta}_{\alpha\overline{\beta}}\left(  x,t\right)  \geq0
\]
on $B_{o}\left(  R\right)  \times\left(  0,t_{1}\right]  $ and the bottom
spectrum of $\widetilde{\eta}_{\alpha\overline{\beta}}\left(  x_{1}%
,t_{1}\right)  $
\[
\lambda_{1}\left(  \widetilde{\eta}_{\alpha\overline{\beta}}\left(
x_{1},t_{1}\right)  \right)  =0
\]
for some $x_{1}\in B_{o}\left(  R\right)  $. We assume that $\widetilde{\eta
}_{\alpha\overline{\beta}}\left(  x_{1},t_{1}\right)  $ is a diagonal matrix
and $\widetilde{\eta}_{\gamma\overline{\gamma}}\left(  x_{1},t_{1}\right)  =0$
for $\gamma\in T_{x_{1}}^{1,0}(M)$ with $\left\Vert \gamma\right\Vert =1$.
Here the vector field $\gamma$ is chosen as one of the basis of the CR
structure $T_{x_{1}}^{1,0}(M)$. Therefore, we have%
\begin{equation}
\left(  \frac{\partial}{\partial t}-\Delta_{b}\right)  \widetilde{\eta
}_{\gamma\overline{\gamma}}\left(  x_{1},t_{1}\right)  \leq0. \label{m7}%
\end{equation}
By the fact that $\eta_{\alpha\overline{\beta}}$ is a solution to the CR
Lichnerowicz-Laplace heat equation, we see
\[
\left(  \frac{\partial}{\partial t}-\Delta_{b}\right)  \eta_{\gamma
\overline{\gamma}}\geq0.
\]
Furthermore, with the help of
\[
\left(  \frac{\partial}{\partial t}-\Delta_{b}\right)  \phi=\phi,
\]
we obtain
\[
\left(  \frac{\partial}{\partial t}-\Delta_{b}\right)  \eta_{\gamma
\overline{\gamma}}=\epsilon\phi h_{\gamma\overline{\gamma}}>0
\]
at the point $\left(  x_{1},t_{1}\right)  $. However, it contradicts with the
inequality (\ref{m7}). \ Let $R$ go to infinity, and then $\epsilon$ go to
zero. The proof is completed.
\end{proof}

\end{document}